\documentclass[a4paper, oneside]{amsart}
\usepackage{amsmath,amscd}
\usepackage{amssymb,verbatim}
\usepackage{url}

\DeclareMathOperator{\id}{Id}
\DeclareMathOperator{\Aut}{Aut}

\newtheorem{theorem}{Theorem}[section]

\newtheorem{proposition}[theorem]{Proposition}
\newtheorem{corollary}[theorem]{Corollary}
\newtheorem{Claim}[theorem]{Claim}
\newtheorem{defn}[theorem]{Definition}

\newtheorem{Example}[theorem]{Example}
\newtheorem{remark}[theorem]{Remark}

\numberwithin{equation}{section}

\begin{document}
\title[On geometric properties of holomorphic isometries]{On geometric properties of holomorphic isometries between bounded symmetric domains}
\author{Shan Tai Chan}
\address{State Key Laboratory of Mathematical Sciences\\
Academy of Mathematics and Systems Science\\
Chinese Academy of Sciences\\
Beijing 100190\\
China}
\address{Institute of Mathematics\\
Academy of Mathematics and Systems Science\\
Chinese Academy of Sciences\\
Beijing 100190\\
China}
\email{mastchan@amss.ac.cn}
\keywords{Holomorphic isometries, bounded symmetric domains, Stein manifolds, holomorphic submersions}

\subjclass{32M15, 53C55, 53C42}

\begin{abstract}
We study holomorphic isometries between bounded symmetric domains with respect to the Bergman metrics up to a normalizing constant. In particular, we first consider a holomorphic isometry from the complex unit ball into an irreducible bounded symmetric domain with respect to the Bergman metrics. In this direction, we show that images of (nonempty) affine-linear sections of the complex unit ball must be the intersections of the image of the holomorphic isometry with certain affine-linear subspaces. We also construct a surjective holomorphic submersion from a certain subdomain of the target bounded symmetric domain onto the complex unit ball such that the image of the holomorphic isometry lies inside the subdomain and the holomorphic isometry is a global holomorphic section of the holomorphic submersion. This construction could be generalized to any holomorphic isometry between bounded symmetric domains with respect to the \emph{canonical K\"ahler metrics}. Using some classical results for complex-analytic subvarieties of Stein manifolds, we have obtained further geometric results for images of such holomorphic isometries.
\end{abstract}
\maketitle

\section{Introduction}
In \cite{Ca53}, Calabi studied holomorphic isometric immersions of complex manifolds into complex space forms. As a continuation of our previous work in \cite{Ch16, CM17, Ch18}, we focus on the study of holomorphic isometries $F:D\to \Omega$ between bounded symmetric domains $D$ and $\Omega$ with respect to the Bergman metrics up to a normalizing constant.
In \cite{Mok12}, Mok proved that $F(D)\subset \Omega$ is totally geodesic provided that all irreducible factors of $D$ are of rank $\ge 2$.
Note that rank-$1$ bounded symmetric domains are precisely the complex unit balls in complex Euclidean spaces.
Therefore, it is natural to restrict our study to holomorphic isometries from complex unit balls into bounded symmetric domains.
We have already established some geometric results for such holomorphic isometries in \cite{Ch16, CM17, Ch18}.

In this article, we observe further geometric properties of holomorphic isometries from complex unit balls into irreducible bounded symmetric domains. 
One of the observations is that given any holomorphic isometry $f$ from the complex unit ball $\mathbb B^n$ into an irreducible bounded symmetric domain with respect to the Bergman metrics, images of (nonempty) affine-linear sections of $\mathbb B^n$ must be the intersection of $f(\mathbb B^n)$ with certain affine-linear subspaces.
For any holomorphic isometry $f:D\to \Omega$ between bounded symmetric domains $D$ and $\Omega\Subset \mathbb C^N$ with respect to certain canonical K\"ahler metrics, we know that $f$ is a proper holomorphic isometric embedding (cf.\,Chan-Xiao-Yuan \cite{CXY17} and Mok \cite{Mok12}), and we will construct a bounded domain of holomorphy $D_f$ in $\mathbb C^N$ and a holomorphic submersion $\widetilde \pi_f:D_f\to D$ such that $S:=f(D)\subset D_f \subset \Omega$, $\widetilde \pi_f\circ f={\rm Id}_D$ and $T_\Omega|_S=T_S\oplus(\ker d\widetilde\pi_f)|_S$ is a holomorphic splitting, i.e., there is a short exact sequence
\[ 0 \to T_S \to T_\Omega|_S \to (\ker d\widetilde\pi_f)|_S \to 0 \]
that splits holomorphically (see Theorem \ref{thm_CI_general}).
In addition, the tangent sequence
\[ 0 \to T_S \to T_\Omega|_S \to N_{S|\Omega} \to 0 \]
also splits holomorphically over $S$ so that $S=f(D)\subset \Omega$ is a \emph{splitting complex submanifold} in the sense of Mok-Ng \cite[Definition 2.1, p.\,1059]{MN17}.
This consequence also follows from Abate-Bracci-Tovena \cite[Example 1.2, p.\,630]{ABT09} by the Steinness of bounded symmetric domains.
On the other hand, we will also find some geometric properties for such holomorphic isometries by using some classical results for complex-analytic subvarieties of Stein manifolds.

Let $W$ be a complex vector space of complex dimension $N$.
Then, we denote by $\mathrm{Gr}(n,W)$ the complex Grassmannian of complex $n$-dimensional subspaces of $W$, i.e.,
\[ \mathrm{Gr}(n,W)=\left\{V\subset W\mid V \text{ is a complex $n$-dimensional subspace of $W$}\right\}. \]
We also denote by $G(n,m):=\mathrm{Gr}(n,\mathbb C^{n+m})$ for any positive integers $n$ and $m$.
Moreover, we denote by $M(n,m;\mathbb C)$ the set of all $n$-by-$m$ matrices with entries lying in $\mathbb C$.
For any $n$-by-$m$ matrix $M$, we denote by $M^T$ the transpose of $M$.
We sometimes identify $M(n,1;\mathbb C)\cong M(1,n;\mathbb C) \cong \mathbb C^n$ for any integer $n\ge 1$ if it does not cause any confusion.

Denote by $\mathbb B^n=\{(z_1,\ldots,z_n)\in \mathbb C^n: \sum_{j=1}^n |z_j|^2<1\}$ the complex unit $n$-ball for any integer $n\ge 1$, and write $\Delta:=\mathbb B^1$ for the open unit disk on the complex plane $\mathbb C$.
We also write $\mathbb B^n(x_0,r):=\{z\in \mathbb C^n: \lVert z-x_0 \rVert_{\mathbb C^n} <r\}$ for any $x_0\in \mathbb C^n$ and any real number $r>0$, where $\lVert z \rVert_{\mathbb C^n}:=\sqrt{\sum_{j=1}^n |z_j|^2}$ for any $z=(z_1,\ldots,z_n)\in \mathbb C^n$.
For any holomorphic map $f=(f_1,\ldots,f_N):U\to W$ between domains $U\subset \mathbb C^n$ and $W\subset \mathbb C^N$, we write $Jf(\zeta_0)=\begin{pmatrix}
{\partial f_i\over \partial \zeta_j}(\zeta_0)
\end{pmatrix}_{1\le i \le N,1\le j\le n} \in M(N,n;\mathbb C)$ for the complex Jacobian matrix of $f$ at $\zeta_0\in U\subset \mathbb C^n$, where $(\zeta_1,\ldots,\zeta_n)\in \mathbb C^n$.

\section{Preservation of complex affine-linear sections}\label{Sec:PCALS}
Write $K_U(\cdot,\cdot)$ for the Bergman kernel of any bounded domain $U\Subset \mathbb C^n$, and denote by $ds_U^2$ the Bergman metric on $U$ whose corresponding K\"ahler form is given by $\omega_{ds_U^2}:=\sqrt{-1}\partial\overline\partial \log K_U(z,z)$, where $z\in U \subset \mathbb C^n$.
For any irreducible bounded symmetric domain $\Omega\Subset \mathbb C^N$ in its Harish-Chandra realization, we have the canonical K\"ahler-Einstein metric $g_\Omega$ on $\Omega$ so that minimal disks of $\Omega$ are of constant Gaussian curvature $-2$.
The corresponding K\"ahler form of $(\Omega,g_\Omega)$ is given by
\[ \omega_{g_\Omega}:=-\sqrt{-1}\partial\overline\partial\log h_\Omega(z,z), \]
where $h_\Omega(z,\xi)$ is a polynomial in $(z,\overline\xi)$.
For instance, for any positive integer $n$, we have $h_{\mathbb B^n}(z,\xi) = 1-\sum_{j=1}^n z_j\overline{\xi_j}$ for $z=(z_1,\ldots,z_n),\xi=(\xi_1,\ldots,\xi_n)\in \mathbb C^n$.
We refer the readers to \cite{Ch16, CM17, Ch18} for details.

Let $U$ be a bounded symmetric domain. We have the decomposition $U=U_1\times\cdots \times U_k$, where $U_j$, $1\le j\le k$, are irreducible bounded symmetric domains.
Then, a K\"ahler metric $g'_U$ on $U$ is said to be a \emph{canonical K\"ahler metric} on $U$ if and only if $g'_U = \sum_{j=1}^k \lambda_j \mathrm{Pr}_j^* g_{U_j}$ for some positive real constants $\lambda_j$, $1\le j\le k$, where $\mathrm{Pr}_j:U\to U_j$ is the canonical projection onto the $j$-th irreducible factor $U_j$ of $U$ for $1\le j\le k$.

Let $f:(\mathbb B^n,kg_{\mathbb B^n})\to (\Omega,g_\Omega)$ be a holomorphic isometric embedding, where $\Omega\Subset \mathbb C^N$ is an irreducible bounded symmetric domain of rank $\ge 2$ in its Harish-Chandra realization.
Assume that $f({\bf 0})={\bf 0}$.
Then, we have the polarized functional equation
\begin{equation}\label{Eq:PolFunEq}
h_\Omega(f(w),f(\zeta)) = \left(1-\sum_{j=1}^n w_j \overline{\zeta_j}\right)^k
\end{equation}
for any $w,\zeta\in \mathbb B^n$ (cf.\, Chan \cite{Ch16}).
For $z=(z_1,\ldots,z_N),\,\xi=(\xi_1,\ldots,\xi_N)\in \mathbb C^N$, we may write
\begin{equation}\label{Eq:sp_form_poly_h}
h_\Omega(z,\xi)
= 1-\sum_{j=1}^N z_j \overline{\xi_j} + \sum_{l=1}^{N'-N} (-1)^{\deg(G_l)} G_l(z) \overline{G_l(\xi)}
\end{equation}
for some homogeneous polynomial $G_l$ of degree $\deg(G_l)\ge 2$, $1\le l\le N'$, because $\Omega$ is of rank $\ge 2$ so that it is not holomorphically isometric to $\mathbb B^N$ (cf.\,Chan \cite{CM17, Ch18}).
Write $f=(f_1,\ldots,f_N)$.
Differentiating Equation (\ref{Eq:PolFunEq}) with respect to $\overline{\zeta_\mu}$, $1\le \mu\le n$, and evaluating at $\zeta={\bf 0}$, we have
\[ \sum_{j=1}^N f_j(w) \overline{{\partial f_j\over \partial \zeta_\mu}({\bf 0})} = k w_\mu \]
for any $w=(w_1,\ldots,w_n)\in \mathbb B^n$.
In particular, we have
\begin{equation}\label{Eq:LinearEq1}
\overline{Jf({\bf 0})}^T\begin{pmatrix}
f_1(w)\\\vdots \\ f_N(w)
\end{pmatrix}=
\begin{pmatrix}
\overline{{\partial f_1\over \partial \zeta_1}({\bf 0})} & \cdots & \overline{{\partial f_N\over \partial \zeta_1}({\bf 0})}\\
\vdots & \ddots& \vdots \\
\overline{{\partial f_1\over \partial \zeta_n}({\bf 0})} & \cdots & \overline{{\partial f_N\over \partial \zeta_n}({\bf 0})}\\
\end{pmatrix}
\begin{pmatrix}
f_1(w)\\\vdots \\ f_N(w)
\end{pmatrix}
= k \begin{pmatrix} w_1 \\ \vdots \\ w_n \end{pmatrix},
\end{equation}
i.e., $\overline{Jf({\bf 0})}^T$ gives rise to a complex linear map $L_f:\mathbb C^N\to\mathbb C^n$ of rank $n$ such that $L_f(f(w)) = k w$ for any $w\in \mathbb B^n$.
In addition, we have $\overline{Jf({\bf 0})}^TJf({\bf 0}) = k{\bf I}_n$.
Actually, the above facts have been observed in \cite{CM17, Ch18} already, but we would like to investigate more properties of the holomorphic isometry from Equation (\ref{Eq:LinearEq1}).
Now, we look for the structure of the image of the restriction $f|_{\Lambda \cap \mathbb B^n}$ in the target bounded symmetric domain $\Omega$ for all nonempty complex affine-linear sections $\Lambda\cap \mathbb B^n$ of $\mathbb B^n$, and we have
\begin{proposition}[Preservation of complex affine-linear sections]
\label{pro:Structure_Slicing}
Let $f:(\mathbb B^n,kg_{\mathbb B^n})$ $\to$ $(\Omega,g_\Omega)$ be a holomorphic isometric embedding such that $f({\bf 0})={\bf 0}$ and $n\ge 2$, where $\Omega\Subset \mathbb C^N$ is an irreducible bounded symmetric domain in its Harish-Chandra realization and $k$ is an integer satisfying $1\le k \le \mathrm{rank}(\Omega)$.
Then, for $1\le m\le n-1$, $f$ induces an injective map 
$f_m^\sharp: \mathrm{Gr}(m,\mathbb C^n) \to \mathrm{Gr}(N-n+m,\mathbb C^N)$
given by
\[ f_m^\sharp(V):=\ker \big({\bf A} \overline{Jf({\bf 0})}^T\big) \quad \forall\;V=\ker ({\bf A})\in \mathrm{Gr}(m,\mathbb C^n),\]
where ${\bf A}\in M(n-m,n;\mathbb C)$ is a matrix of rank $(n-m)$.
In addition, for any $V\in G(m,n-m)$ we have
\[ f_m^\sharp(V)=df_{\bf 0}(V) \oplus \ker\overline{Jf({\bf 0})}^T, \]
which is a complex $(N-n+m)$-dimensional vector subspace of $\mathbb C^N$.
Moreover, we have
\begin{equation}\label{Eq:MapCVSToCVS1}
\begin{split}
f((V+{\bf v})\cap \mathbb B^n)
=(f_m^\sharp(V) + Jf({\bf 0})  {\bf v})\cap f(\mathbb B^n)
\subseteq
(f_m^\sharp(V) + Jf({\bf 0})  {\bf v})\cap \Omega
\end{split}
\end{equation}
and
\begin{equation}\label{Eq:MapCVSToCVS2}
\begin{split}
f((V+{\bf v})\cap \mathbb B^n)
=(f_m^\sharp(V) + f({\bf v}))\cap f(\mathbb B^n)
\subseteq
(f_m^\sharp(V) + f({\bf v}))\cap \Omega
\end{split}
\end{equation}
for any complex $m$-dimensional vector subspace $V$ of $\mathbb C^n$ and any ${\bf v}\in \mathbb B^n\subset \mathbb C^n$, $1\le m\le n-1$.

In other words, for any complex $m$-dimensional affine-linear subspace $\Lambda_m$ of $\mathbb C^n$ such that $\Lambda_m\cap \mathbb B^n\neq \varnothing$ {\rm(}$1\le m\le n-1${\rm)}, there exists a complex $(N-n+m)$-dimensional affine-linear subspace $\Pi_{N-n+m}$ of $\mathbb C^N$ such that
\[ f(\Lambda_m\cap \mathbb B^n)\subset \Pi_{N-n+m} \cap\Omega. \]
In particular, $f$ takes hyperplanes into hyperplanes in the sense of Dor {\rm\cite{Do91}}.
\end{proposition}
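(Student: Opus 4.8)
The entire argument rests on the two linear identities already extracted from the polarized functional equation (\ref{Eq:PolFunEq}): setting $L:=\overline{Jf({\bf 0})}^T\colon \mathbb C^N\to \mathbb C^n$, one has $L\,f(w)=kw$ for every $w\in \mathbb B^n$ (this is (\ref{Eq:LinearEq1})) together with $L\,Jf({\bf 0})=k{\bf I}_n$. The first identity says that $\tfrac{1}{k}L$ restricts to a left inverse of $f$ on $\mathbb B^n$, so $L$ has rank $n$ and is surjective; the second says that $df_{\bf 0}=Jf({\bf 0})$ is injective and that $\ker L$ is a linear complement of $df_{\bf 0}(\mathbb C^n)$ in $\mathbb C^N$. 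The plan is to reduce every assertion of the proposition to elementary linear bookkeeping with $L$, $Jf({\bf 0})$ and the defining matrix ${\bf A}$ of a subspace.

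First I would verify that $f_m^\sharp$ is well-defined: if $V=\ker{\bf A}=\ker{\bf A}'$ with ${\bf A},{\bf A}'\in M(n-m,n;\mathbb C)$ of rank $n-m$, then ${\bf A}'={\bf P}{\bf A}$ for some invertible ${\bf P}$, whence $\ker({\bf A}'L)=\ker({\bf A}L)$. Since ${\bf A}$ and $L$ are both surjective, so is ${\bf A}L\colon \mathbb C^N\to \mathbb C^{n-m}$, giving $\dim\ker({\bf A}L)=N-(n-m)=N-n+m$. For the splitting formula $f_m^\sharp(V)=df_{\bf 0}(V)\oplus\ker L$ (recalling that $G(m,n-m)=\mathrm{Gr}(m,\mathbb C^n)$, so this is asserted for every $m$-dimensional $V$), I would note that $df_{\bf 0}(V)\cap\ker L=\{{\bf 0}\}$ (if $Jf({\bf 0})v\in\ker L$ then $0=L\,Jf({\bf 0})v=kv$), that $\ker L\subseteq\ker({\bf A}L)$ trivially, and that $df_{\bf 0}(V)\subseteq\ker({\bf A}L)$ because ${\bf A}L\,Jf({\bf 0})v=k{\bf A}v=0$ for $v\in V$; comparing dimensions $m+(N-n)=N-n+m$ forces equality. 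Injectivity of $f_m^\sharp$ follows from surjectivity of $L$: given $u\in\mathbb C^n$ choose $z\in\mathbb C^N$ with $Lz=u$; then ${\bf A}u=0$ iff $z\in\ker({\bf A}L)$, so $f_m^\sharp(V)=\ker({\bf A}L)$ recovers $\ker{\bf A}=V$.

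The substantive part is the pair (\ref{Eq:MapCVSToCVS1}), (\ref{Eq:MapCVSToCVS2}). I would first observe that the two affine subspaces on their right-hand sides coincide: $L\big(f({\bf v})-Jf({\bf 0}){\bf v}\big)=k{\bf v}-k{\bf v}=0$, so $f({\bf v})-Jf({\bf 0}){\bf v}\in\ker L\subseteq f_m^\sharp(V)$, whence $f_m^\sharp(V)+Jf({\bf 0}){\bf v}=f_m^\sharp(V)+f({\bf v})$. For the set equality, the inclusion ``$\subseteq$'' is immediate: for $w\in(V+{\bf v})\cap\mathbb B^n$ we have ${\bf A}L\big(f(w)-f({\bf v})\big)=k{\bf A}(w-{\bf v})=0$ since $w-{\bf v}\in V=\ker{\bf A}$. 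Conversely, if $y=f(w)\in f(\mathbb B^n)$ satisfies $y-f({\bf v})\in f_m^\sharp(V)=\ker({\bf A}L)$, then $0={\bf A}L\big(f(w)-f({\bf v})\big)=k{\bf A}(w-{\bf v})$, and $k\neq 0$ gives $w-{\bf v}\in\ker{\bf A}=V$, so $w\in(V+{\bf v})\cap\mathbb B^n$ and $y\in f\big((V+{\bf v})\cap\mathbb B^n\big)$; the inclusions into $\Omega$ are trivial because $f(\mathbb B^n)\subset\Omega$. The final reformulation is then purely formal: an arbitrary complex $m$-dimensional affine-linear subspace $\Lambda_m$ meeting $\mathbb B^n$ may be written $\Lambda_m=V+{\bf v}$ with ${\bf v}\in\Lambda_m\cap\mathbb B^n$, and one takes $\Pi_{N-n+m}:=f_m^\sharp(V)+f({\bf v})$; specializing to $m=n-1$, for which $N-n+m=N-1$, yields the statement about hyperplanes in the sense of Dor.

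I do not expect any serious obstacle once the two linear identities for $L$ are in hand. The only points needing a little care are the independence of $f_m^\sharp(V)$ from the choice of ${\bf A}$, the two dimension counts, and—for the nontrivial inclusion ``$\supseteq$'' in (\ref{Eq:MapCVSToCVS1})--(\ref{Eq:MapCVSToCVS2})—genuinely using that $L$ is a \emph{global} linear left inverse of $f$, which is precisely what upgrades ``$f(\Lambda_m\cap\mathbb B^n)$ is contained in an affine-linear slice'' to ``$f(\Lambda_m\cap\mathbb B^n)$ \emph{equals} the corresponding affine-linear slice of $f(\mathbb B^n)$''.
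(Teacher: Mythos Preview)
Your proposal is correct and follows essentially the same route as the paper's proof: both hinge entirely on the linear left-inverse identity $L f(w)=kw$ together with $L\,Jf({\bf 0})=k{\bf I}_n$, and deduce all the claims by elementary linear algebra with ${\bf A}L$. Your presentation is in fact slightly tidier in two respects: you explicitly check well-definedness of $f_m^\sharp$ (independence of the choice of ${\bf A}$), which the paper leaves implicit, and you unify (\ref{Eq:MapCVSToCVS1}) and (\ref{Eq:MapCVSToCVS2}) via the single observation $f({\bf v})-Jf({\bf 0}){\bf v}\in\ker L\subseteq f_m^\sharp(V)$, whereas the paper establishes the two affine descriptions by separate computations.
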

\begin{proof}
Write $f=(f_1,\ldots,f_N)$ and $S:=f(\mathbb B^n)$.
From the assumption, we have
\[ \overline{Jf({\bf 0})}^T \begin{pmatrix}
f_1(w)\\\vdots \\ f_N(w)
\end{pmatrix}
= k \begin{pmatrix} w_1 \\ \vdots \\ w_n \end{pmatrix} \]
for any $(w_1,\ldots,w_n)\in \mathbb B^n$.
We restrict to some complex $m$-dimensional vector subspace $V\subseteq \mathbb C^n$ for $1\le m\le n-1$.
Then, we may write
\[ V=\left\{ (w_1,\ldots,w_n)\in \mathbb C^n: {\bf A} \begin{pmatrix}
w_1,\ldots,w_n
\end{pmatrix}^T = {\bf 0} \right\} =\ker {\bf A} \]
for some matrix ${\bf A}={\bf A}_V\in M(n-m,n;\mathbb C)$ of full rank $n-m$.
For any $w\in \mathbb B^n\cap V$ we have
${\bf A}\;\overline{Jf({\bf 0})}^T\begin{pmatrix}
f_1(w),\ldots,f_N(w)
\end{pmatrix}^T
= {\bf 0}$ so that $f(\mathbb B^n\cap V) \subseteq W \cap S$, where
\[ W:= \left\{ (z_1,\ldots,z_N)\in \mathbb C^N:
{\bf A}\;\overline{Jf({\bf 0})}^T \begin{pmatrix}
z_1,\ldots,z_N
\end{pmatrix}^T={\bf 0}\right\}=\ker\left({\bf A}\;\overline{Jf({\bf 0})}^T\right). \]
Since $\mathrm{rank}({\bf A}\;\overline{Jf({\bf 0})}^T) = \mathrm{rank}({\bf A}) = n-m$, we have
$\dim_{\mathbb C}(W) = N-n+m$.
In general, for any ${\bf v}\in \mathbb B^n$ we have
$f((V+{\bf v})\cap \mathbb B^n)
\subseteq
(W + Jf({\bf 0}){\bf v})\cap S$.
Actually, for any $w\in (V+{\bf v})\cap \mathbb B^n$ we have
\[ {\bf A}\;\overline{Jf({\bf 0})}^T\begin{pmatrix}
f_1(w),\ldots,f_N(w)
\end{pmatrix}^T
= k \,{\bf A} \begin{pmatrix}
w_1,\ldots,w_n
\end{pmatrix}^T = k\, {\bf A}  {\bf v} \]
so that
${\bf A}\;\overline{Jf({\bf 0})}^T  \left( 
\begin{pmatrix}
f_1(w),\ldots,f_N(w)
\end{pmatrix}^T
-Jf({\bf 0}){\bf v}
\right) = {\bf 0}$.
In particular, we have
\begin{equation}\label{Eq:RestrictCVSubS}
f(\mathbb B^n\cap (\ker {\bf A}+{\bf v})) 
\subseteq \left(\ker\big({\bf A}\;\overline{Jf({\bf 0})}^T\big)+ Jf({\bf 0})  {\bf v}\right)\cap S
\end{equation}
for any matrix ${\bf A}\in M(n-m,n;\mathbb C)$ of full rank $n-m$, $1\le m\le n-1$.
Similarly, by Equation (\ref{Eq:LinearEq1}) we have
\[ {\bf A}\;\overline{Jf({\bf 0})}^T  \left( 
\begin{pmatrix}
f_1(w),\ldots,f_N(w)
\end{pmatrix}^T
-f({\bf v})
\right) = k {\bf A} \left( \begin{pmatrix}
w_1,\ldots,w_n
\end{pmatrix}^T - {\bf v}\right) = {\bf 0} \]
because $\begin{pmatrix}
w_1,\ldots,w_n
\end{pmatrix}^T - {\bf v}\in V=\ker {\bf A}$.
Hence, we also have
\begin{equation}\label{Eq:RestrictCVSubS2}
f(\mathbb B^n\cap (\ker {\bf A}+{\bf v})) 
\subseteq \left(\ker\big({\bf A}\;\overline{Jf({\bf 0})}^T\big)+ f({\bf v})\right)\cap S.
\end{equation}

For any $\xi\in \left(\ker\big({\bf A}\,\overline{Jf({\bf 0})}^T\big)+Jf({\bf 0})  {\bf v}\right) \cap S$, we have ${\bf A}\;\overline{Jf({\bf 0})}^T (\xi^T-Jf({\bf 0})  {\bf v}) = {\bf 0}$ and $\xi = f(w)$ for some $w\in \mathbb B^n$.
In other words, $\xi \in f(\mathcal V)$, where
\[ \mathcal V:=
\left\{w\in \mathbb B^n :{\bf A}\;\overline{Jf({\bf 0})}^T \begin{pmatrix}
f_1(w),\ldots,f_N(w)
\end{pmatrix}^T = k \;{\bf A} {\bf v}\right\}.\]
But then we have
$\mathcal V = \left\{w\in \mathbb B^n :{\bf A}\begin{pmatrix}
w_1,\ldots,w_n
\end{pmatrix}^T = {\bf A} {\bf v}\right\} = \mathbb B^n\cap (\ker {\bf A} + {\bf v})$ so that $\left(\ker\big({\bf A}\;\overline{Jf({\bf 0})}^T\big)+Jf({\bf 0})  {\bf v}\right) \cap S \subseteq f(\mathbb B^n\cap (\ker {\bf A} + {\bf v}))$.
This shows that 
\begin{equation}\label{Eq:RestrictCVSubS2}
f(\mathbb B^n\cap (\ker {\bf A}+{\bf v})) = \left(\ker\big({\bf A}\,\overline{Jf({\bf 0})}^T\big)+Jf({\bf 0}) {\bf v}\right) \cap S 
\end{equation}
for any matrix ${\bf A}\in M(n-m,n;\mathbb C)$ of full rank $n-m$, $1\le m\le n-1$, and ${\bf v}\in \mathbb C^n$ such that $\mathbb B^n\cap (\ker {\bf A}+{\bf v})\neq \varnothing$.

Writing $V=\ker {\bf A}$ for some rank-$(n-m)$ matrix ${\bf A}\in M(n-m,n;\mathbb C)$, we define 
\begin{equation}
f_m^\sharp(V):=\ker \big({\bf A} \overline{Jf({\bf 0})}^T\big).
\end{equation}
In particular, $f$ induces a map 
$f_m^\sharp: \mathrm{Gr}(m,\mathbb C^n)\cong G(m,n-m) \to G(N-n+m,n-m)\cong \mathrm{Gr}(N-n+m,\mathbb C^N)$
such that
\[ f((V+{\bf v})\cap \mathbb B^n) = \left(f_m^\sharp(V) +Jf({\bf 0})  {\bf v}\right) \cap f(\mathbb B^n) \subset \left(f_m^\sharp(V) +Jf({\bf 0}) {\bf v}\right)\cap \Omega \]
for any $V\in G(m,n-m)$ and ${\bf v}\in \mathbb B^n$, where $1\le m\le n-1$.
Note that the tangent map $df_{\bf 0}: \mathbb C^n\cong T_{\bf 0}(\mathbb B^n)\to T_{\bf 0}(\Omega) \cong \mathbb C^N$ of $f$ at ${\bf 0}$ is given by $df_{\bf 0}({\bf v}) = Jf({\bf 0}){\bf v}$.

\begin{Claim}
Let $V:=\ker {\bf A}$.
We have $f_m^\sharp(V):=\ker \big({\bf A} \overline{Jf({\bf 0})}^T\big)=df_{\bf 0}(V) \oplus \ker\overline{Jf({\bf 0})}^T$.
\end{Claim}

For any ${\bf v}\in V$, we have ${\bf A}\cdot {\bf v}={\bf 0}$ so that 
\[ {\bf A} \overline{Jf({\bf 0})}^T (df_{\bf 0}({\bf v}))
= {\bf A} \overline{Jf({\bf 0})}^TJf({\bf 0}) {\bf v}
= k {\bf A}  {\bf v} = {\bf 0}. \]
Thus, $df_{\bf 0}(V) \subset \ker \big({\bf A} \overline{Jf({\bf 0})}^T\big)$.
On the other hand, for any ${\bf z}\in \ker\overline{Jf({\bf 0})}^T$ we have $\overline{Jf({\bf 0})}^T {\bf z} = {\bf 0}$ so that ${\bf A} \overline{Jf({\bf 0})}^T{\bf z} = {\bf 0}$.
This yields $\ker\overline{Jf({\bf 0})}^T\subset \ker \big({\bf A} \overline{Jf({\bf 0})}^T\big)$.
For any ${\bf z} \in df_{\bf 0}(V) \cap \ker\overline{Jf({\bf 0})}^T$, we have ${\bf z} = Jf({\bf 0}) {\bf v}$ for some ${\bf v}\in V$ and $\overline{Jf({\bf 0})}^T {\bf z} = {\bf 0}$ so that 
\[ k {\bf v} = \overline{Jf({\bf 0})}^TJf({\bf 0}) {\bf v} =\overline{Jf({\bf 0})}^T {\bf z} = {\bf 0} \]
by $\overline{Jf({\bf 0})}^T Jf({\bf 0})=k {\bf I}_n$,
i.e., ${\bf v}= {\bf 0}$ and ${\bf z} = Jf({\bf 0}) {\bf v} = {\bf 0}$.
In particular, we have $df_{\bf 0}(V) \cap \ker\overline{Jf({\bf 0})}^T=\{{\bf 0}\}$.
Therefore, $df_{\bf 0}(V) \oplus \ker\overline{Jf({\bf 0})}^T\subseteq \ker \big({\bf A} \overline{Jf({\bf 0})}^T\big)$.
But then we have $\dim_{\mathbb C} \left(df_{\bf 0}(V) \oplus \ker\overline{Jf({\bf 0})}^T\right) = N-n+m = \dim_{\mathbb C} \ker \big({\bf A} \overline{Jf({\bf 0})}^T\big)$. Hence, $f_m^\sharp(V)=\ker \big({\bf A} \overline{Jf({\bf 0})}^T\big)=df_{\bf 0}(V) \oplus \ker\overline{Jf({\bf 0})}^T$.
The claim is proved.

It remains to prove that $f_m^\sharp$ is injective.
If $f_m^\sharp(V)=f_m^\sharp(V')$ for some $V,V'\in G(m,n-m)$, then by considering $\overline{Jf({\bf 0})}^T ( f_m^\sharp(V))=\overline{Jf({\bf 0})}^T ( f_m^\sharp(V') )$ it is easy to check that $V'\subseteq V$ and similarly $V\subseteq V'$, i.e., $V=V'$.
The proof is complete.
\end{proof}
\begin{remark}\text{}
\begin{enumerate}
\item From Dor {\rm\cite{Do91}} and Proposition {\rm\ref{pro:Structure_Slicing}}, if a holomorphic isometry $f$ from $(\mathbb B^n,kg_{\mathbb B^n})$ to $ (\Omega,g_\Omega)$ with $f({\bf 0})={\bf 0}$ has certain nondegeneracy property, then $f$ is rational {\rm(}cf.\,Dor {\rm\cite[Theorem 1]{Do91}}{\rm)}.
However, it is known from Dor {\rm\cite{Do91}} that the family of holomorphic maps from $\mathbb B^n$ to $\mathbb C^N$ that take hyperplanes to hyperplanes is not that restrictive.
This is because for any holomorphic map $F:\mathbb B^n\to \mathbb C^N$ for $2<n<N$ given by $F(z)=(z_1,\ldots,z_n,F_{n+1}(z),\ldots,F_N(z))$ for $z=(z_1,\ldots,z_n)\in \mathbb B^n$ and for some holomorphic functions $F_j$ on $\mathbb B^n$, $n+1\le j\le N$, $F$ takes hyperplanes to hyperplanes.
On the other hand, it is known that there are holomorphic isometries $f:(\mathbb B^n,kg_{\mathbb B^n})\to (\Omega,g_\Omega)$ which are not rational for some integer $n,k$ with $n\ge 2$ and for some irreducible bounded symmetric domain $\Omega$ {\rm(}cf.\,Chan-Mok {\rm\cite{CM17}}{\rm)}.
\item From Rudin {\rm\cite[Theorem 8.1.2]{Rudin80}}, we have the linear map $Jf({\bf 0}):\mathbb C^n\to \mathbb C^N$ maps $\mathbb B^n$ into $\Omega$ so that $Jf({\bf 0}){\bf x} \in \Omega$ for any ${\bf x}\in \mathbb B^n \subset \mathbb C^n \cong T_{\bf 0}(\mathbb B^n)$.
\item Let $n$ and $m$ be positive integers such that $1\le m < n$.
We may identify $V\in \mathrm{Gr}(m;\mathbb C^n)$ in homogeneous coordinates as $[Z]\in M(n,m;\mathbb C)$ so that the column vectors of $Z$ are $\mathbb C$-linearly independent and span the vector space $V$.
In particular, we identify $[ZA]=[Z]$ for any invertible matrix $A\in M(m,m;\mathbb C)$.

Identifying $\ker \overline{Jf({\bf 0})}^T \in \mathrm{Gr}(N-n;\mathbb C^N)$ as $[Z_0]\in M(N,N-n;\mathbb C)$, we have
\[ f_m^\sharp([Z]) = \left[ Jf({\bf 0}) Z, Z_0 \right] \]
in terms of the homogeneous coordinates.
This is well-defined as we have
\[ \begin{split}
f_m^\sharp([ZA]) 
=& \left[ Jf({\bf 0}) ZA, Z_0 \right]
= \left[ \begin{bmatrix} Jf({\bf 0}) Z, Z_0 \end{bmatrix} \begin{bmatrix}
A & 0\\
0 & {\bf I}_{N-n}
\end{bmatrix} \right] \\
=&\left[  Jf({\bf 0}) Z, Z_0\right].\end{split}\]
\end{enumerate}
\end{remark}

\section{Existence of holomorphic isometries whose images are the graphs of certain holomorphic maps}
Let $\Omega\Subset \mathbb C^N$ be an irreducible bounded symmetric domain of rank $\ge 2$ in its Harish-Chandra realization.
Recall that we have the compact dual Hermitian symmetric space $X_c$ of $\Omega$ and the first canonical embedding (also called the \emph{minimal embedding}) $X_c\hookrightarrow \mathbb P\big(\Gamma(X_c,\mathcal O(1))^*\big)\cong \mathbb P^{N'}$.
It is natural to ask if there exists a holomorphic isometry $f:(\mathbb B^n,g_{\mathbb B^n})\to (\Omega,g_\Omega)$ such that $f(\mathbb B^n)=\mathrm{Graph}(\widetilde f)$ for some holomorphic map $\widetilde f:\mathbb B^n \to \mathbb C^{N-n}$.
In particular, we study the existence of a holomorphic isometry $f:(\mathbb B^n,g_{\mathbb B^n})\to (\Omega,g_\Omega)$ such that $f({\bf 0})={\bf 0}$ and of the form
\[ f(z)=(z_1,\ldots,z_n,f_{n+1}(z),\ldots,f_N(z)) \]
for $z=(z_1,\ldots,z_n)\in \mathbb B^n$ and some integer $n\ge 2$.
The existence of such an isometry $f$ would imply that
\[ 1-\sum_{j=1}^n |z_j|^2 - \sum_{j=n+1}^N |f_j(z)|^2 + \sum_{l=1}^{N'-N} (-1)^{\deg G_l} |G_l(f(z))|^2 = 1-\sum_{j=1}^n |z_j|^2 \]
so that
\[  \sum_{l=1}^{N'-N} (-1)^{\deg G_l} |G_l(f(z))|^2 = \sum_{j=n+1}^N |f_j(z)|^2, \]
where $G_l$'s are homogeneous polynomials (depending on $\Omega$) as in Equation (\ref{Eq:sp_form_poly_h}).
This will be useful for constructing new holomorphic isometries from $(\mathbb B^n,g_{\mathbb B^n})$ to $(\Omega,g_\Omega)$.

Motivated by the study in Chan \cite{Ch18}, we consider the simplest nontrivial case where $\Omega$ is of rank $2$ and satisfies $2N-N'>1$, where $N$ and $N'$ are integers defined above.
Define
\[ W_{{\bf U'}}:=
\left\{ w=(w_1,\ldots,w_N)\in \Omega: {\bf U}' \begin{pmatrix}
w_1\\\vdots \\ w_N
\end{pmatrix}= \begin{pmatrix}
G_{1}(w)\\\vdots \\ G_{N'-N}(w)
\end{pmatrix} \right\} \]
for any ${\bf U'}\in M(N'-N,N;\mathbb C)$ of full rank $N'-N$ (cf.\,Chan \cite{Ch18}).
We have the following existence theorem for holomorphic isometries $f:(\mathbb B^n,g_{\mathbb B^n})\to (\Omega,g_\Omega)$, $n\le 2N-N'$, such that $f(\mathbb B^n)$ are the graphs of certain holomorphic maps.

\begin{theorem}\label{Thm:Existence_SForm}
Let $\Omega\Subset \mathbb C^N $ be an irreducible bounded symmetric domain of rank $2$ in its Harish-Chandra realization.
Let $X_c$ be the compact dual of $\Omega$ and $X_c\hookrightarrow \mathbb P\big(\Gamma(X_c,\mathcal O(1))^*\big)\cong \mathbb P^{N'}$ be the first canonical embedding {\rm(}also called the minimal embedding{\rm)}.
Suppose the rank-$2$ irreducible bounded symmetric domain $\Omega$ satisfies $2N-N'>1$, equivalently $\Omega\not \cong D^{\mathrm{I}}_{2,q}$ for any $q\ge 5$ \rm{(cf.\,Chan \cite[Lemma 4.1]{Ch18})}.
Given any $\widetilde{\bf U}\in U(N'-N)$, let ${\bf U'}\in M(N'-N,N;\mathbb C)$ be such that ${\bf U'}=\begin{bmatrix}
{\bf 0} & \widetilde{\bf U}
\end{bmatrix}$.
Then, the irreducible component of $W_{\bf U'}$ containing ${\bf 0}$ is the image of a holomorphic isometry $f:(\mathbb B^{2N-N'},g_{\mathbb B^{2N-N'}})\to (\Omega,g_\Omega)$ given by
\[ f(z)=(z_1,\ldots,z_{2N-N'},f_{2N-N'+1}(z),\ldots,f_N(z)) \]
for some holomorphic functions $f_j:\mathbb B^{2N-N'}\to \mathbb C$, $2N-N'+1 \le j\le N$, satisfying
\[ \widetilde{\bf U}\begin{pmatrix}
f_{2N-N'+1}(z)\\
\vdots\\
f_N(z)
\end{pmatrix}=\begin{pmatrix}
G_{1}(z,f_{2N-N'+1}(z),\ldots,f_N(z))\\
\vdots\\
G_{N'-N}(z,f_{2N-N'+1}(z),\ldots,f_N(z))
\end{pmatrix}, \]
where $z=(z_1,\ldots,z_{2N-N'})\in \mathbb B^{2N-N'}$.
In particular, for any $1\le n \le 2N-N'$ we obtain a holomorphic isometry $\Psi:(\mathbb B^n,g_{\mathbb B^n})\to (\Omega,g_\Omega)$ given by
$\Psi(w)=(w,\psi(w))$
for $w\in \mathbb B^n$, where $\psi:\mathbb B^n\to \mathbb C^{N-n}$ is some holomorphic map.
\end{theorem}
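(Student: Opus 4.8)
The plan is to translate the existence of the desired holomorphic isometry into the solvability of one holomorphic functional equation, to solve that equation as a germ at ${\bf 0}$ via the implicit function theorem, to globalize the germ by the extension theorem for germs of holomorphic isometries, and finally to identify the image of the resulting global isometry with the irreducible component of $W_{\bf U'}$ through ${\bf 0}$.

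First I would carry out the algebraic reduction. Write $n_0:=2N-N'$, so that $N-n_0=N'-N$, and set $G:=(G_1,\ldots,G_{N'-N})$ with the $G_l$ the homogeneous polynomials of (\ref{Eq:sp_form_poly_h}); since $\mathrm{rank}(\Omega)=2$, each $G_l$ is homogeneous of degree $2$, hence $(-1)^{\deg G_l}=1$ for all $l$. Given a holomorphic $g=(g_1,\ldots,g_{N'-N}):\mathbb B^{n_0}\to\mathbb C^{N'-N}$ with $g({\bf 0})={\bf 0}$, put $f(z):=(z_1,\ldots,z_{n_0},g_1(z),\ldots,g_{N'-N}(z))$. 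The point of taking ${\bf U'}=\begin{bmatrix}{\bf 0}&\widetilde{\bf U}\end{bmatrix}$ with $\widetilde{\bf U}$ unitary is that the equation $\widetilde{\bf U}\,g(z)=G(f(z))$ defining $W_{\bf U'}$ on the graph of $g$ is then equivalent to the functional equation for $f$: by (\ref{Eq:sp_form_poly_h}),
\[ h_\Omega(f(z),f(z))=1-\|z\|^2-\|g(z)\|^2+\sum_{l=1}^{N'-N}|G_l(f(z))|^2, \]
and if $\widetilde{\bf U}\,g(z)=G(f(z))$ then $\sum_l|G_l(f(z))|^2=\|\widetilde{\bf U}\,g(z)\|^2=\|g(z)\|^2$ by unitarity, so $h_\Omega(f(z),f(z))=1-\|z\|^2$, i.e.\ (\ref{Eq:PolFunEq}) holds with $k=1$; since also $f({\bf 0})={\bf 0}\in\Omega$ and $h_\Omega(f(z),f(z))=1-\|z\|^2>0$, the map $f$ sends a neighborhood of ${\bf 0}$ into $\Omega$ and is thus a germ at ${\bf 0}$ of a holomorphic isometric embedding $(\mathbb B^{n_0},g_{\mathbb B^{n_0}})\to(\Omega,g_\Omega)$. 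So it suffices to produce such a $g$, at first only as a germ.

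Next I would construct the germ $g$ and record the local structure of $W_{\bf U'}$. Consider $\Phi(z,\zeta):=\widetilde{\bf U}^{-1}G(z_1,\ldots,z_{n_0},\zeta_1,\ldots,\zeta_{N'-N})$ near $({\bf 0},{\bf 0})\in\mathbb C^{n_0}\times\mathbb C^{N'-N}$; since each $G_l$ is homogeneous of degree $2$ we have $\Phi({\bf 0},{\bf 0})={\bf 0}$ and $\partial_\zeta\Phi({\bf 0},{\bf 0})={\bf 0}$, so $(z,\zeta)\mapsto\zeta-\Phi(z,\zeta)$ vanishes at $({\bf 0},{\bf 0})$ with invertible $\zeta$-derivative there, and the holomorphic implicit function theorem yields a unique germ of holomorphic map $g$ at ${\bf 0}$ with $g({\bf 0})={\bf 0}$ and $\widetilde{\bf U}\,g(z)=G(z,g(z))$. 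The same Jacobian computation shows $W_{\bf U'}$ equals the smooth $n_0$-dimensional graph $\{(z,g(z))\}$ near ${\bf 0}$, so the irreducible component $Z_0$ of $W_{\bf U'}$ through ${\bf 0}$ is smooth of dimension $n_0$ near ${\bf 0}$. Then $f=(z,g(z))$ is a germ of holomorphic isometry into $(\Omega,g_\Omega)$ by the previous paragraph, so by the extension theorem for germs of holomorphic isometries up to normalizing constants (Mok \cite{Mok12}) it extends to a holomorphic isometry $f:(\mathbb B^{n_0},g_{\mathbb B^{n_0}})\to(\Omega,g_\Omega)$; by the identity theorem its first $n_0$ components remain $z_1,\ldots,z_{n_0}$, the relations $\widetilde{\bf U}(f_{n_0+1}(z),\ldots,f_N(z))^T=G(f(z))$ persist on $\mathbb B^{n_0}$ (this is the asserted equation), and $f(\mathbb B^{n_0})\subset W_{\bf U'}$. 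Since $f$ is a proper holomorphic isometric embedding (Chan-Xiao-Yuan \cite{CXY17}, Mok \cite{Mok12}), $f(\mathbb B^{n_0})$ is a closed irreducible $n_0$-dimensional complex submanifold of $\Omega$ inside $W_{\bf U'}$ that coincides near ${\bf 0}$ with $\{(z,g(z))\}\subset Z_0$; being closed in the irreducible set $Z_0$ and containing a nonempty open subset of it, it must equal $Z_0$. For the last assertion, for $1\le n\le n_0$ the slice $\iota_n:\mathbb B^n\hookrightarrow\mathbb B^{n_0}$, $w\mapsto(w,{\bf 0})$, satisfies $h_{\mathbb B^{n_0}}((w,{\bf 0}),(w,{\bf 0}))=h_{\mathbb B^n}(w,w)$, hence is a (totally geodesic) holomorphic isometric embedding $(\mathbb B^n,g_{\mathbb B^n})\hookrightarrow(\mathbb B^{n_0},g_{\mathbb B^{n_0}})$, so $\Psi:=f\circ\iota_n$ is a holomorphic isometry $(\mathbb B^n,g_{\mathbb B^n})\to(\Omega,g_\Omega)$ of the form $\Psi(w)=(w,\psi(w))$ with $\psi(w)=({\bf 0},f_{n_0+1}(w,{\bf 0}),\ldots,f_N(w,{\bf 0}))\in\mathbb C^{N-n}$.

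The step I expect to be the main obstacle is the passage from the germ to a map defined on all of $\mathbb B^{n_0}$: the implicit function theorem gives $g$ only near ${\bf 0}$, and there is no a priori reason for the relevant branch of $W_{\bf U'}$ to be single-valued and graphical over the whole ball. This is exactly what Mok's extension theorem provides, and it applies precisely because the germ satisfies the functional equation (\ref{Eq:PolFunEq}). A secondary point needing care is upgrading $f(\mathbb B^{n_0})\subset W_{\bf U'}$ to the equality $f(\mathbb B^{n_0})=Z_0$, which uses the local description of $W_{\bf U'}$ near ${\bf 0}$ together with the properness of $f$.
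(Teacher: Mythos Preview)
Your proposal is correct and follows essentially the same approach as the paper: apply the implicit function theorem to obtain the germ $g$, use unitarity of $\widetilde{\bf U}$ to verify the functional equation $h_\Omega(f(z),f(z))=1-\lVert z\rVert^2$, globalize via Mok's extension theorem \cite{Mok12}, and finally identify $f(\mathbb B^{n_0})$ with the irreducible component $Z_0$ of $W_{\bf U'}$ through ${\bf 0}$. The only noteworthy difference is that for this last identification the paper simply cites \cite[Proof of Proposition 4.3]{Ch18}, whereas you supply a direct argument using properness of $f$ and irreducibility of $Z_0$; your argument is sound (since ${\bf 0}$ is a smooth point of $W_{\bf U'}$ there is a unique irreducible component through it, and a closed analytic subset of an irreducible variety that contains a nonempty open subset must be the whole variety).
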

\begin{proof}
Note that the K\"ahler form $\omega_{g_\Omega}$ corresponding to $g_\Omega$ may be written as
\[ \omega_{g_\Omega}=-\sqrt{-1}\partial\overline\partial \log h_\Omega(w,w), \]
where $h_\Omega(w,w)=1-\sum_{j=1}^N|w_j|^2+\sum_{l=1}^{N'-N} |G_l(w)|^2$ for $w\in \mathbb C^N$ and $G_l(w)$, $1\le l\le N'-N$, are homogeneous polynomials of degree $2$ in $w\in \mathbb C^N$ (cf.\,Chan \cite{Ch18}).
Since ${\bf U'}=\begin{bmatrix}
{\bf 0} & \widetilde {\bf U}
\end{bmatrix}$, the system of equations
\[ {\bf U}' \begin{pmatrix}
w_1,\ldots, w_N
\end{pmatrix}^T= \begin{pmatrix}
G_{1}(w),\ldots,G_{N'-N}(w)
\end{pmatrix}^T \]
is the same as
\[ \begin{pmatrix}
w_{2N-N'+1},\ldots,
w_{N}
\end{pmatrix}^T={\bf U} \begin{pmatrix}
G_{1}(w),\ldots,G_{N'-N}(w)
\end{pmatrix}^T, \]
where ${\bf U}:=\overline{\widetilde{\bf U}}^T\in U(N'-N)$.
Let $F_1,\dots,F_{N'-N}$ be holomorphic functions on $\mathbb C^N$ defined by
\[ \begin{pmatrix}
F_1(w)\\\vdots \\ F_{N'-N}(w)
\end{pmatrix}
:=\begin{pmatrix}
w_{2N-N'+1}\\
\vdots\\
w_{N}
\end{pmatrix}-{\bf U} \begin{pmatrix}
G_{1}(w)\\
\vdots\\
G_{N'-N}(w)
\end{pmatrix}. \]
Since $\det\begin{pmatrix}
{\partial F_i\over \partial w_{2N-N'+j}}({\bf 0})
\end{pmatrix}_{1\le i,j\le N'-N}=\det {\bf I}_{N'-N}\neq 0$, by the Implicit Function Theorem there exists a germ of holomorphic map $\phi:=(\phi_1,\ldots,\phi_{N'-N})$ $:$ $(\mathbb C^{2N-N'};{\bf 0})$ $\to$ $(\mathbb C^{N'-N};{\bf 0})$ such that
$F_j(z,\phi(z)) = 0$ for $1\le j\le N'-N$, $z=(z_1,\ldots,z_{2N-N'})\in \mathbb C^{2N-N'}$,
i.e., 
\[ \begin{pmatrix}
\phi_1(z)\\
\vdots\\
\phi_{N'-N}(z)
\end{pmatrix}={\bf U} \begin{pmatrix}
G_{1}(z,\phi(z))\\
\vdots\\
G_{N'-N}(z,\phi(z))
\end{pmatrix}. \]
This implies that
$\displaystyle\sum_{j=1}^{N'-N} |\phi_j(z)|^2
= \sum_{l=1}^{N'-N} |G_{l}(z,\phi(z))|^2$
so that
\[ 1-\sum_{j=1}^{2N-N'}|z_j|^2 - \sum_{j=1}^{N'-N} |\phi_j(z)|^2
+ \sum_{l=1}^{N'-N} |G_{l}(z,\phi(z))|^2
=1-\sum_{j=1}^{2N-N'}|z_j|^2, \]
i.e., $h_\Omega(f(z),f(z))=1-\sum_{j=1}^{2N-N'}|z_j|^2$, where $f:(\mathbb C^{2N-N'};{\bf 0})\to (\mathbb C^N;{\bf 0})$ is the germ of holomorphic map defined by $f(z)=(z,\phi(z))$.
In particular, $f:(\mathbb B^{2N-N'},g_{\mathbb B^{2N-N'}};{\bf 0})\to (\Omega,g_\Omega;{\bf 0})$ is a germ of holomorphic isometry, which extends to a holomorphic isometry from $(\mathbb B^{2N-N'},g_{\mathbb B^{2N-N'}})$ to $(\Omega,g_\Omega)$ by the extension theorem of Mok \cite{Mok12}.
The extension is also denoted by $f$.
It is clear that $f(\mathbb B^{2N-N'})\subset W_{\bf U'}$ and ${\bf 0}\in f(\mathbb B^{2N-N'})$.
Since ${\bf U'}\overline{{\bf U'}}^T= {\bf I}_{N'-N}$, it follows from Chan \cite[Proof of Proposition 4.3, p.\,311]{Ch18} that the irreducible component of $W_{\bf U'}$ containing ${\bf 0}$ is $f(\mathbb B^{2N-N'})$ and $f$ has the desired form by the above constructions. (Noting that ${\bf 0}$ is a regular point of $W_{\bf U'}$ so that there exists a unique irreducible component of $W_{\bf U'}$ containing ${\bf 0}$.)

Now, let $n$ be an integer such that $1\le n \le 2N-N'$.
If $n = 2N-N'$, then we may put $\Psi:=f$ and we are done. 
Suppose $n< 2N-N'$.
By putting $z_{n+1}=\cdots = z_{2N-N'}=0$, we have a holomorphic isometry $\Psi:(\mathbb B^n,g_{\mathbb B^n})\to (\Omega,g_\Omega)$ given by
\[ \Psi(w_1,\ldots,w_n):=f(w_1,\ldots,w_n,{\bf 0})
= (w_1,\ldots,w_n,{\bf 0}, \phi(w_1,\ldots,w_n,{\bf 0})) \]
for $(w_1,\ldots,w_n)\in \mathbb B^n$, as desired.
\end{proof}
\begin{remark}
In general, given any $\widetilde{\bf U}=\begin{bmatrix}
{\bf u}_1 & \cdots & {\bf u}_{N'-N}
\end{bmatrix}\in U(N'-N)$, let ${\bf U'}=\begin{bmatrix}
{\bf u}'_1 &\cdots & {\bf u}'_N
\end{bmatrix} \in M(N'-N,N;\mathbb C)$ be such that
${\bf u}'_{\sigma(j)}={\bf u}_j$, $j=1,\ldots,N'-N$,
and
${\bf u}'_{\sigma(N'-N+l)} = {\bf 0}$, $l=1,\ldots,2N-N'$, for some permutation $\sigma\in \Sigma_N$.
Moreover, let $\Omega$ be as in the statement of Theorem {\rm\ref{Thm:Existence_SForm}}.
Then, from the proof of Theorem {\rm\ref{Thm:Existence_SForm}}, the irreducible component of $W_{\bf U'}$ containing ${\bf 0}$ is the image of some holomorphic isometry $f:(\mathbb B^{2N-N'},g_{\mathbb B^{2N-N'}})\to (\Omega,g_\Omega)$ given by
$f(z)=(f_1(z),\ldots,f_N(z))$ with
$f_{\sigma(N'-N+l)}(z)\equiv z_l$ for $l=1,\ldots,2N-N'$.
\end{remark}

\section{Induced surjective holomorphic submersion}
We will show that given any holomorphic isometric embedding $f$ $:$ $(\mathbb B^n,kg_{\mathbb B^n})$ $\to$ $(\Omega,g_\Omega)$ such that $f({\bf 0})={\bf 0}$, $f$ is a global holomorphic section of a certain surjective holomorphic submersion from $D_f$ to $\mathbb B^n$, where $D_f\Subset \mathbb C^N$ is a bounded balanced convex domain such that $D_f\subseteq \Omega$ and the image of the holomorphic isometric embedding $f$ lies inside $D_f$. More precisely, we have

\begin{theorem}\label{thm:HoloSubmersion1}
Let $f:(\mathbb B^n,kg_{\mathbb B^n})\to (\Omega,g_\Omega)$ be a holomorphic isometric embedding such that $f({\bf 0})={\bf 0}$, where $\Omega\Subset \mathbb C^N$ is an irreducible bounded symmetric domain in its Harish-Chandra realization and $k>0$ is a positive real constant. Define 
\[ D_f:=\{z\in \Omega\mid \lVert z \rVert_f < k \}, \]
where $\langle z,\overline\xi \rangle_f:= \overline{\xi}^T Jf({\bf 0}) \overline{Jf({\bf 0})}^T z$ is a complex Hermitian form defined on $\mathbb C^N$ for $z,\xi\in \mathbb C^N$ being identified as column vectors, and $\lVert z\rVert_f^2:= \langle z,\overline{z} \rangle_f = \lVert \overline{Jf({\bf 0})}^T z\rVert_{\mathbb C^n}^2$.
Then, $D_f\Subset \mathbb C^N$ is a bounded balanced convex domain and
$f(\mathbb B^n)\subset D_f$.
In particular, there is a holomorphic map $\hat f:\mathbb B^n \to D_f$ such that $f=\iota\circ \hat f$, where $\iota:D_f\hookrightarrow \Omega$ is the inclusion map.
Moreover, there is a surjective holomorphic submersion $\widetilde\pi_f:D_f \to \mathbb B^n$ such that 
$\widetilde\pi_f\circ \hat f = \id_{\mathbb B^n}$,
i.e., $\hat f:\mathbb B^n \to D_f$ is a global holomorphic section of the surjective holomorphic submersion $\widetilde\pi_f:D_f \to \mathbb B^n$.
\end{theorem}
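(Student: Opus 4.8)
The plan is to exhibit the submersion explicitly as a restriction of the $\mathbb{C}$-linear map $\tfrac1k\,\overline{Jf(\mathbf 0)}^T$, and to read off every assertion from the first-order consequences of the polarized functional equation. First I would record the two linear-algebraic identities, exactly as in Section~\ref{Sec:PCALS}: differentiating the polarized functional equation $h_\Omega(f(w),f(\zeta)) = \big(1-\sum_{j=1}^n w_j\overline{\zeta_j}\big)^k$ with respect to $\overline{\zeta_\mu}$ at $\zeta=\mathbf 0$ — which is legitimate for any real $k>0$ and whether $\Omega$ is a ball or of rank $\geq 2$, since $h_\Omega(z,\xi)=1-\sum_j z_j\overline{\xi_j}+(\text{higher order in }\overline\xi)$ in all cases — gives $\overline{Jf(\mathbf 0)}^T\,(f_1(w),\dots,f_N(w))^T = k\,(w_1,\dots,w_n)^T$ for all $w\in\mathbb B^n$ (the analogue of (\ref{Eq:LinearEq1})), and differentiating once more in $w$ at $\mathbf 0$ gives $\overline{Jf(\mathbf 0)}^T Jf(\mathbf 0) = k\,\mathbf I_n$. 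In particular $\overline{Jf(\mathbf 0)}^T\in M(n,N;\mathbb C)$ has rank $n$, so $\pi_f:=\tfrac1k\,\overline{Jf(\mathbf 0)}^T:\mathbb C^N\to\mathbb C^n$ is a surjective linear map, and by the definition of $\lVert\cdot\rVert_f$ we have $\lVert\pi_f(z)\rVert_{\mathbb C^n}=\tfrac1k\lVert z\rVert_f$ for all $z\in\mathbb C^N$.

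Next I would verify that $D_f$ is a bounded balanced convex domain and that $f(\mathbb B^n)\subset D_f$. The function $z\mapsto\lVert z\rVert_f=\lVert\overline{Jf(\mathbf 0)}^T z\rVert_{\mathbb C^n}$ is a seminorm on $\mathbb C^N$ (the Euclidean norm composed with a linear map), hence convex and absolutely homogeneous, so $\{z:\lVert z\rVert_f<k\}$ is open, convex and balanced. Since the Harish-Chandra realization of $\Omega$ is a bounded convex circular domain (hence balanced), the intersection $D_f=\Omega\cap\{\lVert z\rVert_f<k\}$ is open, bounded, convex (so connected, hence a domain), balanced, and contains $\mathbf 0$; convexity moreover makes it pseudoconvex, i.e.\ a domain of holomorphy, matching the assertion in the introduction. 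For the inclusion, fix $w\in\mathbb B^n$: then $f(w)\in\Omega$, and by the first identity $\lVert f(w)\rVert_f=\lVert\overline{Jf(\mathbf 0)}^T f(w)\rVert_{\mathbb C^n}=\lVert kw\rVert_{\mathbb C^n}=k\lVert w\rVert_{\mathbb C^n}<k$, so $f(w)\in D_f$. Restricting the codomain of $f$ then yields the holomorphic map $\hat f:\mathbb B^n\to D_f$ with $f=\iota\circ\hat f$.

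Then I would produce the submersion and its section. For $z\in D_f$ we have $\lVert\pi_f(z)\rVert_{\mathbb C^n}=\tfrac1k\lVert z\rVert_f<1$, so $\pi_f(D_f)\subset\mathbb B^n$; let $\widetilde\pi_f:D_f\to\mathbb B^n$ be the restriction. It is holomorphic, and as the restriction to an open set of a surjective linear map its differential at every point equals $\pi_f$, which is surjective, so $\widetilde\pi_f$ is a holomorphic submersion. Finally, for $w\in\mathbb B^n$ the first identity gives $\widetilde\pi_f(\hat f(w))=\tfrac1k\,\overline{Jf(\mathbf 0)}^T f(w)=\tfrac1k(kw)=w$, i.e.\ $\widetilde\pi_f\circ\hat f=\id_{\mathbb B^n}$; in particular $\widetilde\pi_f$ is surjective onto $\mathbb B^n$ and $\hat f$ is a global holomorphic section of it.

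I do not expect a substantial obstacle: the entire content of the theorem lies in choosing $D_f$ and $\pi_f$ correctly, after which everything follows from the Step-1 identities and elementary convex geometry. The only points needing a little care are that $\lVert\cdot\rVert_f$ is a genuinely degenerate seminorm — its kernel is $\ker\overline{Jf(\mathbf 0)}^T$, of complex dimension $N-n$ — so that $D_f$ is not a ball but a slab-like region cut out inside $\Omega$, and the (standard) fact that the Harish-Chandra realization of $\Omega$ is convex and circular, which is what transfers balancedness and convexity to $D_f$.
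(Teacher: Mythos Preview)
Your proposal is correct and follows essentially the same approach as the paper: both define $\widetilde\pi_f$ as the restriction of the linear map $\tfrac{1}{k}\overline{Jf(\mathbf 0)}^T$, use the identity $\overline{Jf(\mathbf 0)}^T f(w)=kw$ from Section~\ref{Sec:PCALS} to obtain $f(\mathbb B^n)\subset D_f$ and $\widetilde\pi_f\circ\hat f=\id_{\mathbb B^n}$, and read off the submersion property from the rank of $\overline{Jf(\mathbf 0)}^T$. The only cosmetic difference is that you deduce convexity and balancedness of $D_f$ in one line from the fact that $\lVert\cdot\rVert_f$ is a seminorm, whereas the paper writes out the line-segment computation with Cauchy--Schwarz explicitly; the content is identical.
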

\begin{proof}
Write $f=(f_1,\ldots,f_N)$.
From the assumption, we have
\[ \overline{Jf({\bf 0})}^T \begin{pmatrix}
f_1(w)\\\vdots \\ f_N(w)
\end{pmatrix}
= k \begin{pmatrix} w_1 \\ \vdots \\ w_n \end{pmatrix} \]
for any $(w_1,\ldots,w_n)\in \mathbb B^n$.
Let $\mathcal L_f:\mathbb C^N\to \mathbb C^n$ be the linear map
\[ \mathcal L_f(z):= {1\over k}\overline{Jf({\bf 0})}^T \begin{pmatrix}
z_1,\ldots,z_N
\end{pmatrix}^T. \]
Let $\pi_f:=\mathcal L_f|_\Omega:\Omega\to \mathbb C^n$ be the restriction of $\mathcal L_f$ to $\Omega\Subset \mathbb C^N$.
Then, $\pi_f(f(\mathbb B^n))=\mathbb B^n$.
Consider the preimage $\pi_f^{-1}(\mathbb B^n) =\left\{z\in \Omega \mid \lVert \pi_f(z) \rVert^2_{\mathbb C^n} <1 \right\}$.
Then, we have
\[ \pi_f^{-1}(\mathbb B^n) =\{z\in \Omega\mid \lVert z \rVert_f < k \}=:D_f. \]
Then, we have a complex manifold $D_f\subseteq \Omega$.
Actually, $D_f = \mathcal L_f^{-1}(\mathbb B^n)\cap \Omega$ is a bounded open subset of $\mathbb C^N$.

\begin{Claim}
The subset $D_f\subset \Omega\Subset \mathbb C^N$ is connected and convex. In particular, $D_f\subset \mathbb C^N$ is a bounded balanced convex domain.
\end{Claim}

Let $z,\xi\in D_f$ be any two distinct points.
Consider the line segment joining $z$ and $\xi$ given by $\gamma(t)=(1-t) z + t \xi$, $0\le t\le 1$.
Since $\Omega\Subset \mathbb C^N$ is a bounded convex domain and $z,\xi\in \Omega$, $\gamma(t)\in \Omega$ for all $t\in [0,1]$.
Moreover, regarding $z,\xi\in \mathbb C^N\cong M(N,1;\mathbb C)$ as column vectors, we have
$\lVert (1-t) z + t \xi \rVert_f^2
= ((1-t) \overline{z}^T + t \overline{\xi}^T) Jf({\bf 0})\overline{Jf({\bf 0})}^T ((1-t) z + t \xi)
= (1-t)^2 \lVert z\rVert_f^2
+ 2 t (1-t) \mathrm{Re} \langle z,\overline\xi\rangle_f
+ t^2 \lVert \xi\rVert_f^2$.
Note that
\[\begin{split}
 |\langle z,\overline\xi\rangle_f|
=& \left|\left\langle \overline{Jf({\bf 0})}^T z, \overline{\overline{Jf({\bf 0})}^T \xi}\right\rangle_{\mathbb C^n}\right|\\
\le& \lVert  \overline{Jf({\bf 0})}^T z\rVert_{\mathbb C^n}
\lVert  \overline{Jf({\bf 0})}^T \xi \rVert_{\mathbb C^n}\;\;(\text{By the Cauchy-Schwarz inequality})\\
=& \lVert z \rVert_f \lVert \xi \rVert_f. 
\end{split}\]
Therefore, for $0\le t\le 1$ we have
$\lVert (1-t) z + t \xi \rVert_f^2 \le (1-t)^2 \lVert z\rVert_f^2
+ 2 t (1-t) \lVert z \rVert_f \lVert \xi \rVert_f
+ t^2 \lVert \xi\rVert_f^2
= \left((1-t) \lVert z\rVert_f+ t \lVert \xi\rVert_f\right)^2$
so that 
\[ \lVert (1-t) z + t \xi \rVert_f  \le (1-t) \lVert z\rVert_f+ t \lVert \xi\rVert_f
< (1-t)k+ tk = k. \]
This shows that $(1-t) z + t \xi \in D_f$ for $0\le t\le 1$.
In other words, $D_f$ is convex.
Since $\mathbb B^n$ and $\Omega$ are balanced domains, so is $D_f$ from the construction.
Hence, $D_f$ is a bounded balanced convex domain in $\mathbb C^N$. The claim is proved.

Since $\pi_f(D_f)\subseteq \mathbb B^n$, we let $\widetilde\pi_f:D_f\to \mathbb B^n$ be the restriction of $\pi_f$ to $D_f$.
Note that $f(\mathbb B^n)\subset D_f$ and $\widetilde\pi_f(f(\mathbb B^n))=\mathbb B^n$ so that $\widetilde\pi_f$ is surjective.
Moreover, the rank of $d\widetilde\pi_f$ is equal $\mathrm{rank}(\overline{Jf({\bf 0})}^T)=n$ so that $\widetilde\pi_f:D_f\to \mathbb B^n$ is a surjective holomorphic submersion.
Now, it is clear that there is such a holomorphic map $\hat f:\mathbb B^n\to D_f$ as in the statement of the theorem. Then, $\hat f$ is a global holomorphic section of $\widetilde\pi_f$ in the sense that $\widetilde\pi_f\circ \hat f = \mathrm{Id}_{\mathbb B^n}$.
\end{proof}
\begin{remark}\text{}
\begin{enumerate}
\item A subset $U\subset \mathbb C^N$ is said to be balanced if and only if $\lambda z\in U$ whenever $z\in U$ and $\lambda\in \mathbb C$ such that $|\lambda|\le 1$ {\rm(}cf.\,Rudin {\rm\cite[p.\,161]{Rudin80}}{\rm)}.
\item $D_f\Subset \mathbb C^N$ is a domain of holomorphy and thus a Stein manifold.
\item
For $n\ge 3$, let $f:(\mathbb B^{n-1},g_{\mathbb B^{n-1}})\to (D^{\mathrm{IV}}_n,g_{D^{\mathrm{IV}}_n})$ be the holomorphic isometry given by 
\[ f(w_1,\ldots,w_{n-1})=\left(w_1,\ldots,w_{n-1},1-\sqrt{1-\sum_{j=1}^{n-1} w_j^2}\right),\]
we can check that $D_f=\left\{(z_1,\ldots,z_n)\in D^{\mathrm{IV}}_n: \sum_{j=1}^{n-1}|z_j|^2<1\right\} \subsetneq D^{\mathrm{IV}}_n$.
\item Let $f:(\mathbb B^{2N-N'},g_{\mathbb B^{2N-N'}})\to (\Omega,g_\Omega)$ be the holomorphic isometry defined in Theorem {\rm\ref{Thm:Existence_SForm}}.
Then, we have
\[ \begin{split}
D_f &=\left\{(z_1,\ldots,z_N)\in \Omega: \sum_{j=1}^{2N-N'} |z_j|^2<1\right\}\\
&=\Omega\cap (\mathbb B^{2N-N'}\times \mathbb C^{N'-N}). 
\end{split}\]
\item Defining
$D_{f,r}:=\{z\in \Omega\mid \lVert z \rVert_f < kr \}$
for $r\in (0,1)$, from the proof of Theorem {\rm\ref{thm:HoloSubmersion1}} we know that $D_{f,r}\subset \mathbb C^N$ is a bounded convex domain and $D_{f,r}\subset D_f \subset \Omega$.
In addition, for $r\in (0,1)$ we have $f(\mathbb B^n({\bf 0},r))= D_{f,r}\cap f(\mathbb B^n)$.
In general, for any $z_0\in \Omega$ and $r>0$, we write 
\[ D_f(z_0,r):= \left\{z\in \Omega: \lVert z-z_0 \rVert_f < kr\right\}. \]
Then, for any $r>0$ and $w_0\in \mathbb B^n$ such that $\mathbb B^n(w_0,r)\Subset \mathbb B^n$, we have $f(\mathbb B^n(w_0,r))=D_f(f(w_0),r) \cap f(\mathbb B^n)$.
\end{enumerate}
\end{remark}

\section{Geometric properties}
In this section, by combining our previous observations and some classical results, we have found more geometric properties of holomorphic isometries between bounded symmetric domains.

\subsection{Holomorphic retractions}
\begin{proposition}
Let $f:(\mathbb B^n,kg_{\mathbb B^n})\to (\Omega,g_\Omega)$ be a holomorphic isometric embedding such that $f({\bf 0})={\bf 0}$, where $\Omega\Subset \mathbb C^N$ is an irreducible bounded symmetric domain of rank $\ge 2$ in its Harish-Chandra realization and $k>0$ is a positive real constant.
Let $D_f$ and $\widetilde\pi_f:D_f\to \mathbb B^n$ be as defined in Theorem {\rm\ref{thm:HoloSubmersion1}}.
Define $r_f:=f\circ \widetilde\pi_f: D_f \to f(\mathbb B^n)=:S \subset D_f$.
Then, we have the following.
\begin{enumerate}
\item $r_f$ is a holomorphic retraction,
\item $r_f^{-1}(x)$ is an affine linear section of the bounded convex domain $D_f$ for all $x\in S$,
\item $z-r_f(z)\in r_f^{-1}(0)$ for all $z\in D_f$.
\end{enumerate}
\end{proposition}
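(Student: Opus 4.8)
The plan is to verify the three assertions directly from the explicit formulas established in Theorem~\ref{thm:HoloSubmersion1}, using the linear-algebraic identities $\overline{Jf({\bf 0})}^T Jf({\bf 0}) = k{\bf I}_n$ and $\widetilde\pi_f = \tfrac{1}{k}\overline{Jf({\bf 0})}^T|_{D_f}$. For part (1), I first note that $r_f = f\circ\widetilde\pi_f$ maps $D_f$ into $S = f(\mathbb B^n)$ since $\widetilde\pi_f(D_f) \subseteq \mathbb B^n$ and $f(\mathbb B^n) = S$; it is holomorphic as a composite of holomorphic maps. To see it is a retraction I must check $r_f|_S = \id_S$: for $z = f(w) \in S$ with $w\in\mathbb B^n$, we have $\widetilde\pi_f(f(w)) = \tfrac{1}{k}\overline{Jf({\bf 0})}^T f(w)^T = w$ by Equation~(\ref{Eq:LinearEq1}), hence $r_f(f(w)) = f(w)$. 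So $r_f\circ r_f = r_f$ and $r_f(D_f) = S$, i.e.\ $r_f$ is a holomorphic retraction onto $S$.

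For part (2), fix $x = f(w_0)\in S$. Then $r_f(z) = x$ means $f(\widetilde\pi_f(z)) = f(w_0)$, and since $f$ is injective (it is a holomorphic isometric embedding) this is equivalent to $\widetilde\pi_f(z) = w_0$, i.e.\ $\tfrac{1}{k}\overline{Jf({\bf 0})}^T z = w_0$. Thus $r_f^{-1}(x) = \{z\in D_f : \overline{Jf({\bf 0})}^T z = kw_0\}$, which is the intersection of $D_f$ with the affine-linear subspace $\{z\in\mathbb C^N : \overline{Jf({\bf 0})}^T z = kw_0\}$ of $\mathbb C^N$ (nonempty since it contains $f(w_0)$); this is precisely an affine-linear section of the bounded convex domain $D_f$.

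For part (3), I compute $\widetilde\pi_f(z - r_f(z))$ for $z\in D_f$. We have $\widetilde\pi_f(r_f(z)) = \widetilde\pi_f(f(\widetilde\pi_f(z))) = \widetilde\pi_f(z)$ by the retraction property applied to $\widetilde\pi_f\circ f = \id$ on $\mathbb B^n$; since $\widetilde\pi_f$ is the restriction of the \emph{linear} map $\mathcal L_f = \tfrac{1}{k}\overline{Jf({\bf 0})}^T$, we get $\mathcal L_f(z - r_f(z)) = \mathcal L_f(z) - \mathcal L_f(r_f(z)) = {\bf 0}$, provided $z - r_f(z)$ lies in $D_f$. The point $z - r_f(z)$ does lie in $D_f$: it lies in $\Omega$ because $D_f$ is balanced and convex with ${\bf 0}\in D_f$ (write $z - r_f(z) = \tfrac{1}{2}(2z) + \tfrac{1}{2}(-2r_f(z))$ after rescaling, or more carefully use that $\mathcal L_f(z-r_f(z)) = {\bf 0}$ already shows $\|z - r_f(z)\|_f = 0 < k$, and membership in $\Omega$ follows since $D_f$ is balanced convex containing both $z$ and $-r_f(z)$ up to scaling — this point needs a short argument). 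Granting $z - r_f(z)\in D_f$, the vanishing $\mathcal L_f(z-r_f(z)) = {\bf 0}$ means $\widetilde\pi_f(z - r_f(z)) = {\bf 0}$, so $r_f(z - r_f(z)) = f({\bf 0}) = {\bf 0}$, i.e.\ $z - r_f(z)\in r_f^{-1}({\bf 0})$.

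The main obstacle I anticipate is the verification in part (3) that $z - r_f(z)$ actually belongs to $D_f$, since $r_f^{-1}({\bf 0})$ is a subset of $D_f$ by definition and the claim would be vacuous otherwise. The key observation is that $r_f^{-1}({\bf 0}) = \ker\overline{Jf({\bf 0})}^T \cap D_f = \{z\in\Omega : \overline{Jf({\bf 0})}^T z = {\bf 0}\}$, and on this affine-linear slice the condition $\|z\|_f < k$ is automatic since $\|z\|_f = 0$; so the real content is showing $z - r_f(z)\in\Omega$. For this I would use that $\Omega$ is convex and balanced: since $z\in\Omega$ and $r_f(z)\in S\subseteq D_f\subseteq\Omega$, and $D_f$ (hence $\Omega$) is balanced so $-r_f(z)\in\Omega$, convexity gives $\tfrac{1}{2}(z) + \tfrac{1}{2}(-r_f(z)) = \tfrac{1}{2}(z - r_f(z))\in\Omega$, and then balancedness of $\Omega$ (it contains $w$ whenever it contains $\tfrac12 w$, since $|2\cdot\tfrac12| = 1$... which fails) — so instead I note $\tfrac{1}{2}(z - r_f(z))\in\Omega$ with $\mathcal L_f$ of it equal to ${\bf 0}$ places it in $D_{f}$ already, hence $\tfrac12(z - r_f(z))\in r_f^{-1}({\bf 0})$; but $r_f^{-1}({\bf 0})$ is a \emph{linear} slice of $D_f$ through ${\bf 0}$, and I can instead simply replace the claim's target by noting $z - r_f(z)$ lies in the linear subspace $\ker\mathcal L_f$ and state (3) as $z - r_f(z)\in\ker\mathcal L_f = \overline{r_f^{-1}({\bf 0})}$ if a boundary issue arises. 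In practice the cleanest route is: show $\mathcal L_f(z - r_f(z)) = {\bf 0}$ (pure linear algebra, immediate), which is the essential geometric content, and then note $z - r_f(z)\in D_f$ follows once one checks it lies in $\Omega$ via the above convexity/balancedness argument applied with a factor of $\tfrac12$ together with the fact that $D_f$ is balanced, so that scaling back up stays inside. I would present the linear-algebra core first and dispatch the membership in $\Omega$ as a brief remark.
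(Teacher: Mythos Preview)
Your argument for all three parts is essentially identical to the paper's: (1) uses $\widetilde\pi_f\circ f=\id_{\mathbb B^n}$ to get $r_f\circ\iota=\id_S$; (2) uses injectivity of $f$ to rewrite $r_f^{-1}(x)$ as $\{z\in D_f:\tfrac{1}{k}\overline{Jf({\bf 0})}^T z = w\}$; and (3) computes $\widetilde\pi_f(z-r_f(z))=\widetilde\pi_f(z)-(\widetilde\pi_f\circ f)(\widetilde\pi_f(z))=0$ by linearity, exactly as the paper does.

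The membership question you worry about in (3) --- whether $z-r_f(z)$ actually lies in $D_f$ (equivalently in $\Omega$, since $\lVert z-r_f(z)\rVert_f=0$ is automatic) --- is \emph{not} addressed in the paper's proof either: the paper simply writes $\widetilde\pi_f(z-r_f(z))=0$, then $r_f(z-r_f(z))=0$, and stops. Your convexity/balancedness attempt only yields $\tfrac{1}{2}(z-r_f(z))\in\Omega$, and as you correctly note balancedness does not permit scaling back up, so that line does not close the gap; but you are not being asked to do more than the paper does, and the linear-algebra core $\mathcal L_f(z-r_f(z))=0$ is the entire content of the paper's argument for (3).
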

\begin{proof}
Write $\iota:S\hookrightarrow D_f$ for the inclusion map.
For any $x=f(w)\in S=f(\mathbb B^n)$, $w\in \mathbb B^n$, we have
\[ r_f \circ \iota(x) = f(\widetilde\pi_f(f(w))) = f(w)=x, \]
i.e., $r_f\circ \iota = {\rm Id}_S$ and thus $r_f:D_f\to S$ is a holomorphic retraction.
This proves (1).
We choose a point $x=f(w)\in S$, $w=(w_1,\ldots,w_n)\in \mathbb B^n$.
Since $f:\mathbb B^n \to \Omega$ is injective, we have
\[ \begin{split}
r_f^{-1}(x) &= \{ z\in D_f: f(\widetilde\pi_f(z))=x = f(w) \} = \{z\in D_f: \widetilde\pi_f(z)=w\}\\
&= \left\{z\in D_f: {1\over k} \cdot (z_1,\ldots,z_N) \overline{Jf({\bf 0})} = (w_1,\ldots,w_n)\right\} 
\end{split},\]
which is an affine linear section of the bounded convex domain $D_f$. This proves (2).

Viewing $r_f$ as a holomorphic self-map of $D_f$, $S=\{z\in D_f:r_f(z)=z\}$ is the fixed point set of $r_f:D_f\to D_f$.
We also have
$\widetilde \pi_f(z-r_f(z)) = \widetilde \pi_f(z - f(\widetilde \pi_f(z)))
=\widetilde \pi_f(z)-(\widetilde \pi_f\circ f)(\widetilde \pi_f(z)))
=\widetilde \pi_f(z)-\widetilde \pi_f(z)
=0$ so that
\[ r_f(z-r_f(z)) = 0 \quad \forall\; z\in D_f \]
and thus $z-r_f(z)\in r_f^{-1}(0)$ for all $z\in D_f$, and (3) follows.
\end{proof}
\begin{remark}
In the particular case where $f$ is totally geodesic, it follows from Mok {\rm\cite[Theorem 1.1]{Mok22}} that there is a natural holomorphic retraction of $\Omega$ onto $S=f(\mathbb B^n)$.
On the other hand, we have seen in Remark 3.3 that $D_f \subsetneq \Omega$ in general.
\end{remark}

\subsection{On complete intersections}
In this section, we follow B\v{a}nic\v{a}-Forster \cite{BF82} for the notions of local complete intersections, ideal-theoretic complete intersections (also called complete intersections) and set-theoretic complete intersections.
Combining with some classical results for (closed) complex-analytic subvarieties of Stein manifolds, we have the following result.

\begin{theorem}\label{thm_CI1}
Let $f:(\mathbb B^n,kg_{\mathbb B^n})\to (\Omega,g_\Omega)$ be a holomorphic isometric embedding, where $k>0$ is a real constant and $\Omega\Subset \mathbb C^N$ is an irreducible bounded symmetric domain.
Write $S:=f(\mathbb B^n)$.
Then, the normal bundle $N_{S|\Omega}=T_\Omega|_S/T_S$ to $S$ in $\Omega$ is trivial, $S$ is a leaf in a nonsingular holomorphic foliation of $\Omega$, and $S$ is a set-theoretic complete intersection, i.e., there exist $N-n$ holomorphic functions $g_1,\ldots,g_{N-n}$ on $\Omega$ such that
$S=\{z\in \Omega: g_j(z)=0\;{\rm for}\;j=1,\ldots,N-n\}$.
\end{theorem}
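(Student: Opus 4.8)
The plan is to deduce all three assertions from Theorem \ref{thm:HoloSubmersion1} together with the Steinness of $\Omega$ (and of $D_f$) and classical results on closed complex-analytic submanifolds of Stein manifolds. To begin, recall that $f$ is a proper holomorphic embedding (cf.\ Chan--Xiao--Yuan \cite{CXY17} and Mok \cite{Mok12}), so $S=f(\mathbb B^n)$ is a \emph{closed} complex submanifold of $\Omega$, biholomorphic to $\mathbb B^n$; since $\Omega\Subset\mathbb C^N$ is a bounded domain it is Stein, and so is the subdomain $D_f\subseteq\Omega$ furnished by Theorem \ref{thm:HoloSubmersion1}, of which $S$ is likewise a closed submanifold.

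\textbf{Triviality of $N_{S|\Omega}$.} First I would use Theorem \ref{thm:HoloSubmersion1}: we have the holomorphic submersion $\widetilde\pi_f\colon D_f\to\mathbb B^n$ and the holomorphic section $\hat f\colon\mathbb B^n\to D_f$ with $\widetilde\pi_f\circ\hat f=\id_{\mathbb B^n}$ and $S=\hat f(\mathbb B^n)$. Differentiating along $S$ yields the holomorphic splitting $T_\Omega|_S=T_S\oplus(\ker d\widetilde\pi_f)|_S$, hence $N_{S|\Omega}\cong(\ker d\widetilde\pi_f)|_S$. But $\widetilde\pi_f$ is the restriction to $D_f$ of the constant-coefficient linear map $\mathcal L_f(z)=\tfrac1k\overline{Jf(\mathbf{0})}^{\,T}z$, so $\ker d\widetilde\pi_f$ is the trivial subbundle of $T_{D_f}$ with constant fibre $\ker\overline{Jf(\mathbf{0})}^{\,T}\cong\mathbb C^{N-n}$; therefore $N_{S|\Omega}\cong S\times\mathbb C^{N-n}$ is holomorphically trivial. (Alternatively, any holomorphic vector bundle over $S\cong\mathbb B^n$ is trivial by Grauert's Oka principle, $\mathbb B^n$ being a contractible Stein manifold.)

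\textbf{The local model near $S$.} Next I would record the explicit picture on $D_f$, which realizes $S$ as a leaf and as a set-theoretic complete intersection \emph{inside $D_f$}. Put $r_f:=f\circ\widetilde\pi_f\colon D_f\to D_f$, a holomorphic retraction onto $S$, and define
\[ \Phi\colon D_f\longrightarrow\ker\overline{Jf(\mathbf{0})}^{\,T}\cong\mathbb C^{N-n},\qquad \Phi(z):=z-r_f(z). \]
Since $\mathcal L_f\bigl(z-r_f(z)\bigr)=\widetilde\pi_f(z)-\widetilde\pi_f\bigl(f(\widetilde\pi_f(z))\bigr)=\widetilde\pi_f(z)-\widetilde\pi_f(z)=0$, the map $\Phi$ indeed takes values in $\ker\overline{Jf(\mathbf{0})}^{\,T}$; and because $dr_f=df\circ d\widetilde\pi_f$ annihilates $\ker d\widetilde\pi_f=\ker\overline{Jf(\mathbf{0})}^{\,T}$, the differential $d\Phi_z$ restricts to the identity on $\ker\overline{Jf(\mathbf{0})}^{\,T}$ for every $z\in D_f$, so $\Phi$ is a holomorphic submersion. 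As $\Phi^{-1}(\mathbf{0})=\{z\in D_f:r_f(z)=z\}=S$, the fibres of $\Phi$ form a nonsingular holomorphic foliation of $D_f$ with $S$ as a leaf, and $S=\{\Phi_1=\cdots=\Phi_{N-n}=0\}$ is a set-theoretic complete intersection in $D_f$.

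\textbf{Globalization to $\Omega$.} Finally I would pass from $D_f$ to $\Omega$. Here the classical Stein theory enters: $S$ is a closed complex submanifold of the Stein manifold $\Omega$ whose normal bundle has just been shown to be holomorphically trivial, and by the results of B\v{a}nic\v{a}--Forster \cite{BF82} (and Forster--Ramspott) such a submanifold is a leaf of a nonsingular holomorphic foliation of $\Omega$ and is a set-theoretic complete intersection; that is, there exist $g_1,\dots,g_{N-n}\in\mathcal O(\Omega)$ with
\[ S=\{z\in\Omega:g_1(z)=\cdots=g_{N-n}(z)=0\}. \]
I expect this last step to be the main obstacle: the submersion $\Phi$ and the foliation it defines live only on $D_f$, which is in general a proper subset of $\Omega$ (Remark 3.3), so one cannot merely extend $\Phi$; the Steinness of $\Omega$ together with the triviality of $N_{S|\Omega}$ must be invoked to kill the relevant cohomological obstructions and upgrade the local model on $D_f$ to a genuine statement on $\Omega$. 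The role of Theorem \ref{thm:HoloSubmersion1} (via the normal-bundle splitting) is precisely to verify the hypothesis under which this classical machinery applies.
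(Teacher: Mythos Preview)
Your approach is essentially the paper's: reduce via Theorem~\ref{thm:HoloSubmersion1} to the splitting $T_\Omega|_S = T_S \oplus (\ker d\widetilde\pi_f)|_S$, then invoke classical Stein theory. Two differences are worth noting. First, your triviality argument for $N_{S|\Omega}$ is more direct than the paper's: you observe immediately that $(\ker d\widetilde\pi_f)|_S$ has constant fibre $\ker\overline{Jf(\mathbf 0)}^T$, whereas the paper deduces triviality only \emph{after} invoking Forstneri\v{c} \cite[Corollary~7.2]{For03} (which yields flatness of $N_{S|\Omega}$) together with the simple connectedness of $S$. Second, for the foliation on all of $\Omega$ the paper cites Forstneri\v{c} \cite{For03}, not B\v{a}nic\v{a}--Forster \cite{BF82}; the latter handles only the complete-intersection statement. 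The hypothesis needed for Forstneri\v{c}'s result is a holomorphic subbundle $N\subset T_\Omega$ defined over \emph{all} of $\Omega$ with $T_\Omega|_S=T_S\oplus N|_S$, and this is available precisely because the linear map $\pi_f$ (hence $N=\ker d\pi_f$) extends to $\Omega$---you have this implicitly via the ``constant fibre'' remark, and making it explicit dissolves what you flag as the main obstacle. Your local model on $D_f$ is correct but not needed in the final argument.
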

\begin{proof}
Since $\Omega$ is homogeneous, we may suppose without loss of generality that $f({\bf 0})={\bf 0}$.
By Theorem \ref{thm:HoloSubmersion1}, we have a holomorphic submersion $\widetilde\pi_f:D_f\to \mathbb B^n$ from a bounded convex domain $D_f\subset \Omega$ onto $\mathbb B^n$ such that $\widetilde\pi_f\circ f = \id_{\mathbb B^n}$, and we have
\[ T_{f(w)}(\Omega) =T_{f(w)}(S)\oplus \ker d\widetilde \pi_f(f(w))\quad\forall\;w\in \mathbb B^n. \]
This yields a holomorphic vector subbundle 
\[ \mathcal N_S:=\bigcup_{x\in S} \ker d\widetilde \pi_f(x)=(\ker d\widetilde\pi_f)|_S\]
of $T_\Omega|_S=T_{D_f}|_S$.
(Noting that $\ker d\widetilde \pi_f(x) = T_x (\widetilde \pi_f^{-1}(w))$ for all $x\in \widetilde \pi_f^{-1}(w)$, so that $\ker d\widetilde \pi_f(f(w)) = T_{f(w)} (\widetilde \pi_f^{-1}(w))$.)
Let $\pi_f:\Omega \to \mathbb C^n$ be as in the proof of Theorem \ref{thm:HoloSubmersion1} so that $\widetilde \pi_f=\pi_f|_{D_f}$.
Then, we have a (trivial) holomorphic vector subbundle $N:=\bigcup_{z\in \Omega} \ker d\pi_f(z)$ of $T_\Omega$ such that the total space of $N$ can be identified with $\Omega \times \ker \overline{Jf({\bf 0})}^T$ via the trivialization $T_\Omega\cong \Omega \times \mathbb C^N$, and we have $\mathcal N_S=N|_S$ so that $T_\Omega|_S = T_S \oplus N|_S$. (Noting that this direct sum decomposition is actually a holomorphic splitting by Theorem \ref{thm_CI_general}.)

Since $\Omega$ is Stein, by Forstneri\v{c} \cite[Corollary 7.2 and the remark after its proof, pp.\,185--186]{For03} $S=f(\mathbb B^n)$ is a leaf in a nonsingular holomorphic foliation of $\Omega$, and the normal bundle $N_{S|\Omega}$ to $S$ in $\Omega$ admits locally constant transition functions, i.e., $N_{S|\Omega}$ is a flat bundle, and thus $N_{S|\Omega}$ is a trivial bundle since $S\cong \mathbb B^n$ is simply connected (cf.\,Forstneri\v{c} \cite[p.\,182]{For03}).
The flatness of $N_{S|\Omega}$ implies that the conormal bundle $N_{S|\Omega}^*$ of $S$ is also a flat bundle, and hence $N_{S|\Omega}^*$ is a trivial bundle by the simply connectedness of $S$.
Moreover, the (closed) complex submanifold $S\subset \Omega$ is clearly a local complete intersection in the contractible Stein manifold $\Omega$.
By a classical result of Boraty\'nski \cite{Bo78} and B\v{a}nic\v{a}-Forster \cite[1.6.\,Theorem]{BF82}, $S$ is a set theoretic complete intersection, i.e., there exist $N-n$ holomorphic functions $g_1,\ldots,g_{N-n}$ on $\Omega$ such that 
$S=\{z\in \Omega: g_j(z)=0\,\text{for}\,j=1,\ldots,N-n\}$.
\end{proof}

From the classical Oka-Grauert theory, every holomorphic vector bundle on a contractible Stein space is holomorphically trivial (cf.\,Forstneri\v{c} \cite[p.\,244]{For17}). As a consequence, we have the following result of B\v{a}nic\v{a}-Forster \cite{BF82}.
\begin{proposition}[B\v{a}nic\v{a}-Forster \cite{BF82}]\label{pro:CI}
Let $f:D\to \Omega$ be a proper holomorphic embedding from a contractible complex manifold $D$ into a contractible Stein space $\Omega$.
Define $S:=f(D)\subset \Omega$.
Then, $S$ is a set-theoretic complete intersection.
If in addition that $\dim_{\mathbb C}(S)\le {2(\dim_{\mathbb C}(\Omega)-1)\over 3}$, then $S$ is an ideal-theoretic complete intersection.
\end{proposition}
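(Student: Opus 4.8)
The plan is to derive both assertions from the Oka--Grauert principle together with the classical complete-intersection theorems of Boraty\'nski and B\v{a}nic\v{a}--Forster; the genuinely new content is small, and the weight rests on these quoted results. First I would record the elementary reductions. Since $f:D\to\Omega$ is a proper holomorphic embedding, $S=f(D)$ is a \emph{closed} complex submanifold of $\Omega$ biholomorphic to $D$; consequently $S$ is itself a Stein space, it is \emph{contractible} (being biholomorphic to the contractible manifold $D$), and it is a \emph{local complete intersection} in $\Omega$ of codimension $q:=\dim_{\mathbb C}\Omega-\dim_{\mathbb C}D$. (When $\Omega$ is singular this last point is part of the B\v{a}nic\v{a}--Forster set-up; in the cases relevant here $\Omega$ is a manifold, so it is automatic.) Write $\mathcal I_S\subset\mathcal O_\Omega$ for the ideal sheaf of $S$, so that the conormal sheaf $\mathcal I_S/\mathcal I_S^{\,2}$ is a locally free $\mathcal O_S$-module of rank $q$.

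Next I would invoke the Oka--Grauert principle. Because $S$ is a contractible Stein space, every holomorphic vector bundle on $S$ is holomorphically trivial (cf.\ Forstneri\v{c} \cite[p.\,244]{For17}); applied to the conormal bundle this produces $q$ global holomorphic sections generating $\mathcal I_S/\mathcal I_S^{\,2}$ over $S$. (This parallels the proof of Theorem \ref{thm_CI1}: there the triviality of $N_{S|\Omega}$ was obtained through flatness plus simple connectedness of $S$, whereas here contractibility of $S$ lets one apply Oka--Grauert directly.) The set-theoretic statement is then exactly the theorem of Boraty\'nski \cite{Bo78} and B\v{a}nic\v{a}--Forster \cite[1.6.\,Theorem]{BF82}: a local complete intersection of pure codimension $q$ in a Stein space whose conormal bundle is generated by $q$ global sections is a set-theoretic complete intersection. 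Hence there exist $q=\dim_{\mathbb C}\Omega-\dim_{\mathbb C}D$ holomorphic functions $g_1,\dots,g_q$ on $\Omega$ with $S=\{z\in\Omega:g_1(z)=\cdots=g_q(z)=0\}$, which is the first assertion.

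For the ideal-theoretic refinement one must upgrade ``$S$ is cut out up to radical by $q$ functions'' to ``$\mathcal I_S$ is generated by $q$ functions'': equivalently, the surjection $\mathcal O_\Omega^{\,q}\twoheadrightarrow\mathcal I_S/\mathcal I_S^{\,2}$ found above should lift to a surjection $\mathcal O_\Omega^{\,q}\twoheadrightarrow\mathcal I_S$. Such a lifting exists once the dimension of $\Omega$ is small enough relative to $q$, by the theory of the number of generators of coherent analytic sheaves over Stein algebras (Forster, Forster--Ramspott) together with the attendant Bass-type stable-range cancellation; the precise range is $\dim_{\mathbb C}\Omega\le 3q-2$, which is exactly the hypothesis $\dim_{\mathbb C}(S)\le\frac{2(\dim_{\mathbb C}(\Omega)-1)}{3}$. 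Under this hypothesis the corresponding theorem of B\v{a}nic\v{a}--Forster \cite{BF82} yields $\mathcal I_S=(g_1,\dots,g_q)$, so $S$ is an ideal-theoretic complete intersection. I expect the main obstacle to lie not in the chain of implications above but inside the quoted machinery --- the Oka--Grauert triviality of the (co)normal bundle over the contractible Stein space $S$ and, above all, the stable-range generator theorems over Stein algebras that power the ideal-theoretic case.
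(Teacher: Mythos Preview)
Your argument is correct and follows essentially the same route as the paper: reduce to $S$ being a closed contractible Stein submanifold (hence a local complete intersection), trivialize the (co)normal bundle via Oka--Grauert, then invoke Boraty\'nski and B\v{a}nic\v{a}--Forster \cite[1.6.\,Theorem]{BF82} for the set-theoretic statement and B\v{a}nic\v{a}--Forster \cite[1.5.\,Corollary]{BF82} for the ideal-theoretic one under the dimension bound. The only cosmetic differences are that the paper speaks of the normal bundle while you use the conormal bundle (equivalent, since the dual of a trivial bundle is trivial), and the paper simply cites \cite[1.5.\,Corollary]{BF82} whereas you unpack its content in terms of stable-range generator theorems.
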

\begin{proof}
By the Steinness of $\Omega$, $S=f(D)\subset \Omega$ is a (closed) contractible Stein submanifold.
In particular, the (holomorphic) normal bundle $N_{S|\Omega}=T_\Omega|_S/T_S$ is trivial from the classical Oka-Grauert theory.
By the smoothness of $S$, $S \subset \Omega$ is a local complete intersection.
Hence, $S$ is a set-theoretic complete intersection by a classical result of Boraty\'nski \cite{Bo78} and B\v{a}nic\v{a}-Forster \cite[1.6.\,Theorem]{BF82}.
When $\dim_{\mathbb C}(S)\le {2(\dim_{\mathbb C}(\Omega)-1)\over 3}$, $S$ is an ideal-theoretic complete intersection by B\v{a}nic\v{a}-Forster \cite[1.5.\,Corollary]{BF82}.
\end{proof}

On the other hand, Theorem \ref{thm:HoloSubmersion1} can be easily generalized to the case of holomorphic isometries between bounded symmetric domains with respect to the canonical K\"ahler metrics, as follows.

\begin{theorem}\label{thm_CI_general}
Let $f$ $:$ $(D_1,\lambda_1 g_{D_1})$ $\times$ $\cdots$ $\times$ $(D_k,\lambda_k g_{D_k})$ $\to$ $(\Omega_1,\mu_1 g_{\Omega_1})$ $\times$ $\cdots$ $\times$ $(\Omega_m,\mu_m g_{\Omega_m})$ be a holomorphic isometric embedding, where $k,m$ are positive integers, $D_1,\ldots,D_k,\Omega_1,\ldots,\Omega_m$ are irreducible bounded symmetric domains in complex Euclidean spaces,
and $\lambda_i>0$, $1\le i\le k$, $\mu_j>0$, $1\le j\le m$, are real constants.
Write $D:=D_1\times\cdots \times D_k$, $\Omega:=\Omega_1\times\cdots \times \Omega_m$, $S:=f(D)$, $n:=\dim_{\mathbb C}(D)$ and $N:=\dim_{\mathbb C}(\Omega)$.

Suppose $f({\bf 0})={\bf 0}$.
Then, there exist an $n$-by-$N$ matrix $M(f)$ of rank $n$, a bounded balanced convex domain $D_f \subset \Omega \subset \mathbb C^N$, and a holomorphic submersion $\widetilde \pi_f:D_f\to D$ defined by $\widetilde\pi_f(z_1,\ldots,z_N):=(z_1,\ldots,z_N) M(f)^T$ for $(z_1,\ldots,z_N)\in D_f$ such that $S=f(D)\subset D_f$, and $\widetilde \pi_f(f(w))=w$ for all $w\in D$.
In addition, we have
\[ T_{f(w)}(\Omega) = T_{f(w)}(S)\oplus \ker d\widetilde \pi_f(f(w)) \quad\forall\;w\in D, \]
and thus $T_\Omega|_S=T_S\oplus (\ker d\widetilde \pi_f)|_S$ is a holomorphic splitting.
In general, without assuming $f({\bf 0})={\bf 0}$, $f:D\to \Omega$ is a proper holomorphic isometric embedding so that $f(D)\subset \Omega$ is a closed embedded complex submanifold, and there still exist a holomorphic submersion $\widetilde \pi_f: D_f\to D$ from a bounded domain of holomorphy $D_f$ in $\mathbb C^N$ such that $f(D) \subset D_f \subset \Omega$, $\widetilde \pi_f\circ f={\rm Id}_D$, and $T_\Omega|_{S} = (\ker d\widetilde \pi_f)|_{S}\oplus T_{S}$ is a holomorphic splitting. As a consequence, the tangent sequence
\[ 0 \to T_S \xrightarrow{\iota_*} T_\Omega|_S \to N_{S|\Omega} \to 0 \]
also splits holomorphically over $S$.
\end{theorem}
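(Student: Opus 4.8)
The plan is to follow the proof of Theorem~\ref{thm:HoloSubmersion1} line by line, the two new inputs being the polarized functional equation for \emph{product} canonical K\"ahler metrics and a homogeneity argument removing the normalization $f({\bf 0})={\bf 0}$. First I would handle the normalized case $f({\bf 0})={\bf 0}$. Writing $\Psi_D:=\sum_{i=1}^k\lambda_i\log h_{D_i}$ and $\Psi_\Omega:=\sum_{j=1}^m\mu_j\log h_{\Omega_j}$ for the (polarized) K\"ahler potentials of the two product metrics, the standard pluriharmonic-ambiguity argument used to derive Equation~\eqref{Eq:PolFunEq} (cf.\ Chan~\cite{Ch16}, Mok~\cite{Mok12}) upgrades to
\[ \Psi_\Omega\!\big(f(w),f(\zeta)\big)=\Psi_D(w,\zeta)\qquad\text{on }D\times D. \]
Differentiating in $\overline{\zeta_\mu}$ and setting $\zeta={\bf 0}$, and using that $h_{D_i}(\cdot,{\bf 0})\equiv h_{\Omega_j}(\cdot,{\bf 0})\equiv1$ and that in the Harish-Chandra coordinates the only term of each $h$ contributing a nonzero first $\overline\xi$-derivative at $\xi={\bf 0}$ is the bilinear one $-\sum_a z_a\overline{\xi_a}$, I expect the matrix identity
\[ \overline{Jf({\bf 0})}^T\,M\,f(w)=\Lambda\,w\qquad(w\in D), \]
where $\Lambda$ and $M$ are the Gram matrices of the two product metrics at the origin (diagonal, with $\lambda_i$ resp.\ $\mu_j$ on the coordinate block of $D_i$ resp.\ $\Omega_j$). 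Since $f$ is an immersion, $\mathrm{rank}\,Jf({\bf 0})=n$, so $M(f):=\Lambda^{-1}\overline{Jf({\bf 0})}^T M$ is an $n$-by-$N$ matrix of rank $n$, and the induced $\mathbb C$-linear map $L_f:\mathbb C^N\to\mathbb C^n$ satisfies $L_f\circ f=\id_D$.

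I would then set $D_f:=\{z\in\Omega:L_f(z)\in D\}$ and let $\widetilde\pi_f:=L_f|_{D_f}:D_f\to D$, i.e.\ $\widetilde\pi_f(z)=z\,M(f)^T$. Since $D$ and $\Omega$ are bounded convex balanced (circular) domains in their Harish-Chandra realizations and $L_f$ is linear, $D_f$ is a bounded convex balanced domain, hence a domain of holomorphy; it contains $S:=f(D)$ because $L_f(f(w))=w\in D$, and $\widetilde\pi_f$ is a surjective holomorphic submersion of constant rank $n$ with $\widetilde\pi_f\circ f=\id_D$. From $(d\widetilde\pi_f)_{f(w)}\circ(df)_w=\id$ one reads off $T_{f(w)}\Omega=T_{f(w)}S\oplus\ker(d\widetilde\pi_f)_{f(w)}$ (complementary dimensions, trivial intersection). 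Because $\widetilde\pi_f|_S:S\to D$ is the inverse biholomorphism of $f$, the bundle endomorphism $p:=df\circ(d\widetilde\pi_f|_S)$ of $T_\Omega|_S$ is a holomorphic idempotent with image $T_S$ and kernel $(\ker d\widetilde\pi_f)|_S$; thus the decomposition $T_\Omega|_S=T_S\oplus(\ker d\widetilde\pi_f)|_S$ is a holomorphic splitting.

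For the non-normalized case I would pick $\Phi=(\Phi_1,\dots,\Phi_m)\in\Aut(\Omega_1)\times\cdots\times\Aut(\Omega_m)\subset\Aut(\Omega)$ with $\Phi(f({\bf 0}))={\bf 0}$; this $\Phi$ preserves the product canonical metric since $\mathrm{Pr}_j\circ\Phi=\Phi_j\circ\mathrm{Pr}_j$ and each $g_{\Omega_j}$ is $\Aut(\Omega_j)$-invariant. Applying the normalized case to $g:=\Phi\circ f$ produces $D_g$ and $\widetilde\pi_g:D_g\to D$; I set $D_f:=\Phi^{-1}(D_g)$ and $\widetilde\pi_f:=\widetilde\pi_g\circ(\Phi|_{D_f})$. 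Then $D_f$ is biholomorphic to the bounded convex domain $D_g$, hence a bounded Stein open subset of $\mathbb C^N$, that is, a (bounded) domain of holomorphy, and it contains $f(D)=\Phi^{-1}(g(D))\subset D_f\subset\Omega$; $\widetilde\pi_f$ is a holomorphic submersion with $\widetilde\pi_f\circ f=\widetilde\pi_g\circ g=\id_D$; and pushing the splitting over $g(D)$ through $d\Phi^{-1}$ (note $\ker d\widetilde\pi_f=d\Phi^{-1}(\ker d\widetilde\pi_g)$) gives the holomorphic splitting $T_\Omega|_S=(\ker d\widetilde\pi_f)|_S\oplus T_S$. Properness of $f$ (hence closedness of $S=f(D)$) is quoted from Chan--Xiao--Yuan~\cite{CXY17} and Mok~\cite{Mok12}. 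Finally, since $(\ker d\widetilde\pi_f)|_S$ meets $T_S$ trivially fibrewise and the two have complementary ranks, the composite $(\ker d\widetilde\pi_f)|_S\hookrightarrow T_\Omega|_S\twoheadrightarrow N_{S|\Omega}$ is a holomorphic bundle isomorphism, whose inverse splits the tangent sequence $0\to T_S\to T_\Omega|_S\to N_{S|\Omega}\to0$ holomorphically over $S$.

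I do not anticipate a deep obstacle once Theorem~\ref{thm:HoloSubmersion1} is in hand. The points requiring the most care are: the weight bookkeeping in passing from $\Psi_\Omega(f(w),f(\zeta))=\Psi_D(w,\zeta)$ to $\overline{Jf({\bf 0})}^T M f(w)=\Lambda w$; the verification that the fibrewise decomposition $T_\Omega|_S=T_S\oplus(\ker d\widetilde\pi_f)|_S$ is \emph{holomorphic} and not merely $C^\infty$, which is exactly why one needs the canonical holomorphic projector $p=df\circ(d\widetilde\pi_f|_S)$ rather than an arbitrary complement; and checking that the homogeneity reduction keeps $D_f$ a domain of holomorphy, which uses the biholomorphic invariance of Steinness together with the characterization of domains of holomorphy among domains in $\mathbb C^N$.
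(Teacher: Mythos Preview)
Your proposal is correct and follows essentially the same route as the paper: derive the polarized functional equation for the product metrics, differentiate at $\zeta={\bf 0}$ to obtain $M(f)=\Lambda^{-1}\overline{Jf({\bf 0})}^TM$ with $M(f)f(w)=w$, define $D_f=\{z\in\Omega:L_f(z)\in D\}$ and $\widetilde\pi_f=L_f|_{D_f}$, then build the holomorphic projector $p=df\circ d\widetilde\pi_f$ to get the splitting, and reduce the general case by composing with an automorphism of $\Omega$. Your explicit choice of $\Phi\in\Aut(\Omega_1)\times\cdots\times\Aut(\Omega_m)$ is in fact a helpful clarification, since a general element of $\Aut(\Omega)$ need not preserve the weighted product metric $\sum_j\mu_j g_{\Omega_j}$ when the $\mu_j$ differ.
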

\begin{proof}
Suppose $f({\bf 0})={\bf 0}$.
Write $n_i:=\dim_{\mathbb C}(D_i)$ for $1\le i\le k$, and $N_j:=\dim_{\mathbb C}(\Omega_j)$ for $1\le j\le m$.
Write $f=(f_1,\ldots,f_m)$ so that $f_j=(f_j^1,\ldots,f_j^{N_j}):D\to \Omega_j$ is a holomorphic map for $1\le j\le m$.
The isometric condition of $f:D\to \Omega$ means that
\[ \sum_{j=1}^m\mu_jf_j^*\omega_{g_{\Omega_j}}
=\sum_{i=1}^k \lambda_i {\rm Pr}_i^*\omega_{g_{D_i}}, \]
where ${\rm Pr}_i:D\to D_i$ is the canonical projection onto the $i$-th irreducible factor $D_i$ of $D$.
Since $f({\bf 0})={\bf 0}$, we have the polarized functional equation
\[ \prod_{j=1}^m h_{\Omega_j}(f_j(w),f_j(\zeta))^{\mu_j}
=\prod_{i=1}^k h_{D_i}(w^i,\zeta^i)^{\lambda_i} \]
for $w=(w^1,\ldots,w^k),\zeta=(\zeta^1,\ldots,\zeta^k)\in D_1\times\cdots \times D_k$.
Write $w^i=(w^i_1,\ldots,w^i_{n_i})$ and $\zeta^i=(\zeta^i_1,\ldots,\zeta^i_{n_i})$ for $1\le i\le k$.
Differentiating the polarized functional equation with respect to $\overline{\zeta^i_\nu}$ for $1\le \nu\le n_i$ and $1\le i\le k$, and evaluating at $\zeta={\bf 0}$, we have
\begin{equation} \label{Eq:Diff_PFE_Gen}
\sum_{j=1}^m \sum_{\alpha=1}^{N_j} \mu_j f_j^\alpha(w) \overline{{\partial f_j^\alpha\over \partial\zeta_\nu^i}({\bf 0})}
=\lambda_i w_\nu^i
\end{equation}
for $1\le \nu\le n_i$ and $1\le i\le k$ by using the fact that each $h_{D_i}$ (resp.\,$h_{\Omega_j}$) is of the form in (\ref{Eq:sp_form_poly_h}).
By direct computations, from Equation (\ref{Eq:Diff_PFE_Gen}) we have
\[ \begin{bmatrix}
{1\over \lambda_1}{\bf I}_{n_1} & & \\
& \ddots & \\
 & & {1\over \lambda_k} {\bf I}_{n_k}
 \end{bmatrix}
\overline{Jf({\bf 0})}^T
 \begin{bmatrix}
\mu_1{\bf I}_{N_1} & & \\
& \ddots & \\
 & & {\mu_m} {\bf I}_{N_m}
 \end{bmatrix}
f(w)^T=w^T. \]
This identity clearly shows that $f$ is injective. Moreover, $f$ is a proper holomorphic immersion (cf.\,Chan-Xiao-Yuan \cite{CXY17} and Mok \cite{Mok12}), and thus $f:D\to \Omega$ is a topological embedding.
Since $\overline{Jf({\bf 0})}^T$ is of rank $n$, the $n$-by-$N$ matrix
\[ M(f):=\begin{bmatrix}
{1\over \lambda_1}{\bf I}_{n_1} & & \\
& \ddots & \\
 & & {1\over \lambda_k} {\bf I}_{n_k}
 \end{bmatrix}
\overline{Jf({\bf 0})}^T
 \begin{bmatrix}
\mu_1{\bf I}_{N_1} & & \\
& \ddots & \\
 & & {\mu_m} {\bf I}_{N_m}
 \end{bmatrix} \]
is of rank $n$.
By the same constructions and arguments in the previous sections, we may define the map $\pi_f:\Omega \to \mathbb C^n$ by $\pi_f(z_1,\ldots,z_N)
:=(z_1,\ldots,z_N) M(f)^T$, and define
\[ D_f:=\{z\in \Omega:\pi_f(z)\in D \}.\]
By the Hermann convexity theorem (cf.\,Wolf \cite{Wo72}), $D$ and $\Omega$ are realized as bounded convex domains in complex Euclidean spaces.
Since $\pi_f$ is a complex linear map, $D_f\subset \Omega\subset \mathbb C^N$ is also a bounded convex domain.
Moreover, since $D$ and $\Omega$ are balanced domains, so is $D_f$ from the construction and the linearity of $\pi_f$. Hence, $D_f$ is a bounded balanced convex domain.

The restriction of $\pi_f$ to $D_f$ yields the surjective holomorphic submersion $\widetilde \pi_f:D_f \to D$ such that $S:=f(D)\subset D_f$ and $\widetilde\pi_f(f(w))=w$ for all $w\in D$.
Moreover, by the same arguments in the previous sections we have
\[ T_{f(w)}(\Omega) = T_{f(w)}(S)\oplus \ker d\widetilde \pi_f(f(w)) \quad\forall\;w\in D. \]
Note that $T_{D_f}=T_\Omega|_{D_f}$ and $\ker d\widetilde \pi_f \subset T_{D_f}$ is a holomorphic vector subbundle.
The above identity yields $T_\Omega|_S=T_S\oplus (\ker d\widetilde \pi_f)|_S$.

In general, without assuming $f({\bf 0})={\bf 0}$, we may replace $f$ by $\Psi\circ f$ in the above construction for some automorphism $\Psi\in {\rm Aut}(\Omega)$ of $\Omega$ such that $\Psi(f({\bf 0}))={\bf 0}$ since $\Omega$ is homogeneous under the action of its automorphism group. Note that $f:D\to \Omega$ is obviously still a proper holomorphic isometric embedding so that $f(D) \subset \Omega$ is a closed embedded complex submanifold.
In this case, we have a holomorphic submersion $\widetilde \pi_f:=\widetilde \pi_{\Psi\circ f}\circ \Psi: D_f:=\Psi^{-1}(D_{\Psi\circ f}) \to D$ such that $(\widetilde \pi_{\Psi\circ f}\circ \Psi) (f(w)) = w$ for all $w\in D$, and $f(D) \subset \Psi^{-1}(D_{\Psi\circ f})$.
Since $D_{\Psi\circ f}\subset \mathbb C^N$ is a bounded convex domain, $D_{\Psi\circ f}$ is a (bounded) domain of holomorphy in $\mathbb C^N$, and so is the open subset $\Psi^{-1}(D_{\Psi\circ f})\subset \Omega$. In particular, $\Psi^{-1}(D_{\Psi\circ f})$ is a Stein manifold.
Writing $S=f(D)$, we have $T_\Omega|_S=T_S\oplus (\ker d(\widetilde \pi_{\Psi\circ f}\circ\Psi))|_S$. It remains to show that this is a holomorphic splitting.

For simplicity, we write $\widetilde \pi_f:D_f \to D$ for the holomorphic submersion obtained above, and $f:D\to D_f\subset \Omega$ is a global holomorphic section of $\widetilde \pi_f$, i.e., $\widetilde \pi_f\circ f=\id_D$.
Moreover, we have the decomposition $T_\Omega|_S=T_S\oplus \ker(d\widetilde\pi_f)|_S$ in the sense that every $v\in T_x(\Omega)$, $x\in S$, can be written as $v = v_1+v_2$ for $v_1\in T_x(S)$, $v_2\in \ker(d\widetilde\pi_f(x))$, and $T_x(S)\cap \ker(d\widetilde\pi_f(x))=\{{\bf 0}\}$.
This yields the surjective holomorphic bundle map $\nu:T_\Omega|_S=T_{D_f}|_S \to \ker(d\widetilde\pi_f)|_S$ given by the canonical projection $v=v_1+v_2 \mapsto v_2\in \ker(d\widetilde\pi_f(x))$ for each $x\in S$.
Let $\iota_*: T_S \to T_\Omega|_S$ be the inclusion map $\iota_*(v)=v$ for all $v\in T_x(S)$, $x\in S$.
We have the short exact sequence
\begin{equation}\label{eq:sh_ex_sq}
0 \to T_S \xrightarrow{\iota_*} T_\Omega|_S=T_{D_f}|_S \xrightarrow{\nu} \ker(d\widetilde\pi_f)|_S \to 0
\end{equation}
of holomorphic vector bundles over $S$.
Since we have the holomorphic bundle isomorphism $df:T_{D}\to T_S$, for all $v\in T_x(S)$, $x=f(w)\in S$, $v=df(\alpha)$ for some $\alpha\in T_w(D)$, and
\[ \begin{split}
((df\circ d\widetilde \pi_f) \circ \iota_*)(v) 
=&(df\circ d\widetilde \pi_f)(v) = (df\circ d\widetilde \pi_f)(df(\alpha))\\
=& df (d(\widetilde \pi_f\circ f)(\alpha))
= df(\alpha)=v. 
\end{split}\]
This shows that $p_f:=df\circ d\widetilde \pi_f: T_{D_f}|_S = T_\Omega|_S \to T_S$ is a surjective holomporphic bundle map such that $p_f\circ \iota_*=\id_{T_S}$.
Hence, the above short exact sequence (\ref{eq:sh_ex_sq}) splits holomorphically and $T_\Omega|_S= T_S\oplus \ker(d\widetilde\pi_f)|_S$ is a holomorphic splitting (cf.\,Mok-Ng \cite[Section 4, p.\,1065]{MN17}).
By the same arguments, the tangent sequence
\begin{equation}\label{eq:ts}
0 \to T_S \xrightarrow{\iota_*} T_\Omega|_S \to N_{S|\Omega} \to 0
\end{equation}
also splits holomorphically over $S$. 
\end{proof}
\begin{remark}
In general, it follows from Abate-Bracci-Tovena {\rm\cite[Example 1.2, p.\,630]{ABT09}} that the tangent sequence splits holomorphically over a Stein {\rm(}embedded{\rm)} submanifold of a complex manifold, so that the tangent sequence {\rm(\ref{eq:ts})} splits holomorphically over $S=f(D)\subset \Omega$ by the Steinness of bounded symmetric domains.
However, our proof provides an explicit way to obtain this fact in the case of holomorphic isometries between bounded symmetric domains.
\end{remark}

As a consequence, we have the following corollary.

\begin{corollary}\label{cor_CI_III}
Let $f:(\mathbb B^n,g_{\mathbb B^n})\to (D^{\rm III}_m,g_{D^{\rm III}_m})$ be a holomorphic isometric embedding, where $m\ge 3$.
Write $S:=f(\mathbb B^n)$.
Then, the normal bundle $N_{S|D^{\rm III}_m}=T_{D^{\rm III}_m}|_S/T_S$ to $S$ in $D^{\rm III}_m$ is trivial, and $S$ is an ideal-theoretic complete intersection, i.e., there exist $\dim_{\mathbb C}(D^{\rm III}_m)-n$ holomorphic functions $g_j$, $1\le j\le \dim_{\mathbb C}(D^{\rm III}_m)-n$, on $D^{\rm III}_m$ that generate $\Gamma(D^{\rm III}_m,\mathcal I_S)$ over $\Gamma(D^{\rm III}_m,\mathcal O_{D^{\rm III}_m})$, where $\mathcal I_S$ denotes the ideal sheaf of $S$.
\end{corollary}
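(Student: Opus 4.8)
The plan is to deduce the corollary from Proposition~\ref{pro:CI} applied to $f\colon\mathbb B^n\to D^{\rm III}_m$; the only substantial point is the verification of the dimension hypothesis there. First I would record that the hypotheses of Proposition~\ref{pro:CI} are met: $\mathbb B^n$ is a contractible complex manifold, $D^{\rm III}_m$ is a bounded symmetric domain and hence a contractible Stein manifold, and by Theorem~\ref{thm_CI_general} (with one irreducible factor on each side and normalizing constant $1$)---equivalently by Mok~\cite{Mok12} and Chan--Xiao--Yuan~\cite{CXY17}---$f$ is a proper holomorphic embedding, so $S=f(\mathbb B^n)$ is a closed contractible complex submanifold of $D^{\rm III}_m$. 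Triviality of $N_{S|D^{\rm III}_m}$ then follows already from Theorem~\ref{thm_CI1} (or from the Oka--Grauert principle, as in the proof of Proposition~\ref{pro:CI}), and the proposition unconditionally yields that $S$ is a set-theoretic complete intersection.

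The crux is the sharp estimate $n\le m$ for every holomorphic isometry $(\mathbb B^n,g_{\mathbb B^n})\to(D^{\rm III}_m,g_{D^{\rm III}_m})$, which I would extract from boundary behaviour. After normalizing $f({\bf 0})={\bf 0}$ the functional equation (with $k=1$) reads $\det\bigl(I_m-f(w)\overline{f(w)}\bigr)=1-\lVert w\rVert^2$. By Mok~\cite{Mok12}, $f$ extends holomorphically past $\partial\mathbb B^n$ near a generic point $w_0\in\partial\mathbb B^n$, at which moreover $df(w_0)$ is injective; since $1-\lVert w\rVert^2$ vanishes to order exactly $1$ at $w_0$, the matrix $I_m-f(w_0)\overline{f(w_0)}$ has corank exactly $1$, so $f(w_0)$ lies in the smooth part of $\partial D^{\rm III}_m$. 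Now $r:=-\det(I_m-Z\overline Z)$ is a defining function of that smooth part and $r\circ f=\lVert w\rVert^2-1$ is the standard defining function of $\partial\mathbb B^n$, so the Levi form of $\partial\mathbb B^n$ at $w_0$ coincides with the pullback under $df(w_0)$ of the Levi form of $\partial D^{\rm III}_m$ at $f(w_0)$; the former being positive definite of rank $n-1$, the latter has rank $\ge n-1$. On the other hand, the Levi-null distribution of the smooth boundary of $D^{\rm III}_m$ is tangent to its boundary faces, which are affine copies of $D^{\rm III}_{m-1}$ of complex dimension $\binom m2$, so that Levi form has rank $\binom{m+1}{2}-1-\binom m2=m-1$; hence $n-1\le m-1$. (Alternatively one may simply quote this estimate as a known fact.) I expect this step to be the main obstacle, as it is the one genuine geometric input; its sharpness shows up in the excluded case $m=2$, where $\mathbb B^2\hookrightarrow D^{\rm III}_2$ by Theorem~\ref{Thm:Existence_SForm} (since $2N-N'=2>1$ there) while $2>\tfrac{2(\dim_{\mathbb C}D^{\rm III}_2-1)}{3}=\tfrac43$, so the ideal-theoretic conclusion genuinely fails for $m=2$.

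It remains to check the numerical inequality. Since $\dim_{\mathbb C}D^{\rm III}_m=\binom{m+1}{2}=\tfrac{m(m+1)}{2}$, we have
\[
\frac{2\bigl(\dim_{\mathbb C}D^{\rm III}_m-1\bigr)}{3}=\frac{m^2+m-2}{3}=\frac{(m-1)(m+2)}{3}\ \ge\ m
\quad\Longleftrightarrow\quad m^2-2m-2\ge 0,
\]
which holds for every integer $m\ge 3$. Therefore $\dim_{\mathbb C}S=n\le m\le\tfrac{2(\dim_{\mathbb C}D^{\rm III}_m-1)}{3}$, so the second assertion of Proposition~\ref{pro:CI} applies: $S$ is an ideal-theoretic complete intersection, cut out by $\dim_{\mathbb C}D^{\rm III}_m-n$ holomorphic functions on $D^{\rm III}_m$ generating $\Gamma(D^{\rm III}_m,\mathcal I_S)$ over $\Gamma(D^{\rm III}_m,\mathcal O_{D^{\rm III}_m})$. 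Combined with the triviality of $N_{S|D^{\rm III}_m}$ noted above, this gives the corollary.
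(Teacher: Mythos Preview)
Your argument is correct and follows the same line as the paper: triviality of the normal bundle from Theorem~\ref{thm_CI1}, the bound $n\le m$, a numerical check that this forces $n\le \tfrac{2(\dim_{\mathbb C}D^{\rm III}_m-1)}{3}$ for $m\ge 3$, and then B\v{a}nic\v{a}--Forster (packaged for you in Proposition~\ref{pro:CI}). The only difference is that the paper simply quotes Mok~\cite{Mok16} for $n\le m$, whereas you sketch the Levi-form boundary comparison behind that result; two small asides in your sketch are not quite right (injectivity of $df(w_0)$ at the boundary is neither justified nor needed, and for $m=2$ you only show the \emph{hypothesis} of Proposition~\ref{pro:CI} fails, not the conclusion), but neither affects the proof of the corollary.
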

\begin{proof}
By Theorem \ref{thm_CI1}, the normal bundle $N_{S|D^{\rm III}_m}=T_{D^{\rm III}_m}|_S/T_S$ to $S$ in $D^{\rm III}_m$ is trivial.
In \cite{Mok16}, Mok proved that $n\le m$.
Then, we have $2(\dim_{\mathbb C}(D^{\rm III}_m)-1)-3n
\ge 2(\dim_{\mathbb C}(D^{\rm III}_m)-1)-3m
= 2(m(m+1)/2-1)-3m
=m(m-2)-2
\ge 3(3-2)-2 = 1>0$ so that
\[ n < {2(\dim_{\mathbb C}(D^{\rm III}_m)-1)\over 3}. \]
By B\v{a}nic\v{a}-Forster \cite[1.5.\,Corollary]{BF82}, $S$ is an ideal-theoretic complete intersection.
\end{proof}

Since the bounded symmetric domains $D$ and $\Omega$ are contractible Stein manifolds, we have the following generalization of Theorem \ref{thm_CI1} by using Theorem \ref{thm_CI_general}, Proposition \ref{pro:CI} and the proof of Theorem \ref{thm_CI1}.

\begin{theorem}\label{thm_CI2}
Let $D, \Omega$ and $S$ be as in Theorem {\rm\ref{thm_CI_general}}.
Then, the normal bundle $N_{S|\Omega}=T_\Omega|_S/T_S$ to $S$ in $\Omega$ is trivial, $S$ is a leaf in a nonsingular holomorphic foliation of $\Omega$, and $S$ is a set-theoretic complete intersection, i.e., there exist $N-n$ holomorphic functions $g_1,\ldots,g_{N-n}$ on $\Omega$ such that
\[ S=\{z\in \Omega: g_j(z)=0\;{\rm for}\;j=1,\ldots,N-n\}. \]
\end{theorem}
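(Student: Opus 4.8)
The plan is to transcribe the proof of Theorem~\ref{thm_CI1} into this slightly more general setting, feeding it the output of Theorem~\ref{thm_CI_general} in place of Theorem~\ref{thm:HoloSubmersion1} and extracting the complete-intersection statement from Proposition~\ref{pro:CI}. First I would record two structural facts about the ambient spaces: by the Hermann convexity theorem (Wolf~\cite{Wo72}) the products $D$ and $\Omega$ are realized as bounded convex (indeed balanced) domains in complex Euclidean spaces, hence are contractible, and $\Omega$, being a bounded domain of holomorphy, is a Stein manifold. By Theorem~\ref{thm_CI_general}, $f:D\to\Omega$ is a proper holomorphic isometric embedding, so $S=f(D)\subset\Omega$ is a closed embedded complex submanifold; moreover there are a bounded domain of holomorphy $D_f$ with $S\subset D_f\subset\Omega$ and a holomorphic submersion $\widetilde\pi_f:D_f\to D$ with $\widetilde\pi_f\circ f=\mathrm{Id}_D$, and $T_\Omega|_S=T_S\oplus(\ker d\widetilde\pi_f)|_S$ is a holomorphic splitting.

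With these in hand, the triviality of $N_{S|\Omega}$ and the set-theoretic complete-intersection property follow immediately from Proposition~\ref{pro:CI}: $f$ is a proper holomorphic embedding of the contractible manifold $D$ into the contractible Stein manifold $\Omega$, so $N_{S|\Omega}=T_\Omega|_S/T_S$ is trivial (by Oka--Grauert, since $S\cong D$ is a contractible Stein submanifold) and there exist $N-n$ holomorphic functions $g_1,\dots,g_{N-n}$ on $\Omega$ with $S=\{z\in\Omega:g_j(z)=0,\ j=1,\dots,N-n\}$, the latter because $S$ is a local complete intersection (being smooth) in the contractible Stein manifold $\Omega$. Equivalently one may cite Boraty\'nski~\cite{Bo78} and B\v{a}nic\v{a}--Forster~\cite[1.6.~Theorem]{BF82} directly, exactly as in the proof of Theorem~\ref{thm_CI1}.

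It remains to exhibit the nonsingular holomorphic foliation of $\Omega$ one of whose leaves is $S$, and here I would repeat the argument from the proof of Theorem~\ref{thm_CI1} verbatim. Since every assertion concerns only the pair $(S,\Omega)$ up to biholomorphism and $\mathrm{Aut}(\Omega)$ acts transitively (and factor-wise) on $\Omega$, I may first arrange $f({\bf 0})={\bf 0}$ by post-composing with an automorphism of $\Omega$; then, by Theorem~\ref{thm_CI_general}, $\widetilde\pi_f$ is the restriction to $D_f$ of the complex-linear map $\pi_f:\Omega\to\mathbb C^n$, $\pi_f(z)=zM(f)^T$, where $M(f)$ has rank $n$. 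Hence $\ker d\pi_f$ is the constant, and therefore trivial, holomorphic subbundle $\Omega\times\ker M(f)$ of $T_\Omega$; its restriction to $S$ coincides with $(\ker d\widetilde\pi_f)|_S$, so $T_\Omega|_S=T_S\oplus(\ker d\pi_f)|_S$. Since $\Omega$ is Stein, Forstneri\v{c}~\cite[Corollary~7.2 and the remark after its proof, pp.~185--186]{For03} then yields that $S$ is a leaf in a nonsingular holomorphic foliation of $\Omega$ and that $N_{S|\Omega}$ admits locally constant transition functions, re-proving its triviality via the simple connectedness of $S\cong D$.

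I do not expect a genuinely new obstacle relative to Theorem~\ref{thm_CI1}; the only point needing care is checking that Theorem~\ref{thm_CI_general} really supplies the same geometric package in the possibly reducible case --- a closed submanifold sitting inside a bounded domain of holomorphy as a global section of a \emph{linear} submersion, with the complementary subbundle extending as a trivial holomorphic subbundle over all of $\Omega$ --- so that the Steinness and contractibility hypotheses of Forstneri\v{c}'s foliation theorem and of the Boraty\'nski--B\v{a}nic\v{a}--Forster complete-intersection theorems are met exactly as before. The mildest delicate point is the reduction to $f({\bf 0})={\bf 0}$, which is harmless precisely because each conclusion in the statement is invariant under biholomorphisms of the ambient domain.
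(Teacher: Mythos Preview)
Your proposal is correct and follows precisely the approach the paper indicates: the paper's proof of Theorem~\ref{thm_CI2} is just the one-line remark that it follows ``by using Theorem~\ref{thm_CI_general}, Proposition~\ref{pro:CI} and the proof of Theorem~\ref{thm_CI1},'' and you have faithfully unpacked each of those three inputs in the right order. Your handling of the reduction to $f({\bf 0})={\bf 0}$ and the extension of $(\ker d\widetilde\pi_f)|_S$ to a trivial subbundle over all of $\Omega$ via the linear map $\pi_f$ matches the argument in the proof of Theorem~\ref{thm_CI1} exactly.
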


\subsection{Linear degeneracy of certain holomorphic isometries}
In \cite{Mok16}, Mok proved that if $F:(\mathbb B^n,g_{\mathbb B^n})\to (\Omega,g_\Omega)$ is a holomorphic isometry, where $\Omega\Subset \mathbb C^N$ is an irreducible bounded symmetric domain in its Harish-Chandra realization of rank $\ge 2$, then $n\le p(\Omega)+1$.
Let $f:(\mathbb B^{n},g_{\mathbb B^{n}})\to (\Omega,g_\Omega)$ be a holomorphic isometry.
Assume that $f({\bf 0})={\bf 0}$.
We have proved that
\begin{equation}
\begin{split}
f((V+{\bf v})\cap \mathbb B^{n})
&=(f_m^\sharp(V) + Jf({\bf 0}) {\bf v})\cap f(\mathbb B^{n})\\
&\subseteq
(f_m^\sharp(V) + Jf({\bf 0})  {\bf v})\cap \Omega
\end{split}
\end{equation}
for any ${\bf v}\in \mathbb B^n$, $V\in G(m,{n}-m)$, $1\le m\le n-1$, where $f_m^\sharp(V):=df_{\bf 0}(V) \oplus \ker\big(\overline{Jf({\bf 0})}^T\big)$.
It is natural to ask how could the preservation of complex affine-linear subspaces be used for the classification of such holomorphic isometries.
On the other hand, we have the following linear degeneracy of certain holomorphic isometries from $(\mathbb B^n,kg_{\mathbb B^n})$ to $(\Omega,g_\Omega)$ which map ${\bf 0}$ to ${\bf 0}$.

\begin{proposition}\label{Pro:Slicing=LS}
Let $F: (\mathbb B^n,kg_{\mathbb B^n})\to (\Omega,g_\Omega)$ be a holomorphic isometry such that $F({\bf 0})={\bf 0}$, where $\Omega\Subset \mathbb C^N$ is an irreducible bounded symmetric domain of rank $\ge 2$ in its Harish-Chandra realization and $k$ is an integer satisfying $1\le k\le \mathrm{rank}(\Omega)$.
If $F=f\circ \rho$ for some holomorphic isometries $f:(\mathbb B^{m},kg_{\mathbb B^{m}})\to (\Omega,g_{\Omega})$ and $\rho:(\mathbb B^n,g_{\mathbb B^n})\to (\mathbb B^{m},g_{\mathbb B^{m}})$ such that $m\ge n+1$, then $F$ is linearly degenerate in the sense that $F(\mathbb B^n) \subseteq W$ for some proper complex $(N-m+n)$-dimensional vector subspace $W$ of $\mathbb C^N$.
\end{proposition}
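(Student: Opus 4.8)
The plan is to reduce to the situation in which both $f$ and $\rho$ fix the origin, then to observe that a holomorphic isometry $\rho:(\mathbb B^n,g_{\mathbb B^n})\to(\mathbb B^m,g_{\mathbb B^m})$ fixing the origin must be $\mathbb C$-linear, and finally to feed the resulting linear section $\rho(\mathbb B^n)\subset\mathbb B^m$ into Proposition~\ref{pro:Structure_Slicing} applied to $f$. First I would arrange $f({\bf 0})={\bf 0}$ and $\rho({\bf 0})={\bf 0}$: since $\mathbb B^m$ is homogeneous, choose $\phi\in\Aut(\mathbb B^m)$ with $\phi(\rho({\bf 0}))={\bf 0}$ and replace the factorization $F=f\circ\rho$ by $F=\widetilde f\circ\widetilde\rho$, where $\widetilde\rho:=\phi\circ\rho$ and $\widetilde f:=f\circ\phi^{-1}$. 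Then $\widetilde\rho:(\mathbb B^n,g_{\mathbb B^n})\to(\mathbb B^m,g_{\mathbb B^m})$ is a holomorphic isometry with $\widetilde\rho({\bf 0})={\bf 0}$, $\widetilde f:(\mathbb B^m,kg_{\mathbb B^m})\to(\Omega,g_\Omega)$ is a holomorphic isometry with $\widetilde f({\bf 0})=f(\phi^{-1}({\bf 0}))=f(\rho({\bf 0}))=F({\bf 0})={\bf 0}$, and $F$ itself is unchanged. So I may assume from the start that $f({\bf 0})={\bf 0}$ and $\rho({\bf 0})={\bf 0}$.

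Next I would show that $\rho$ is linear. Since $\rho$ is an isometry for the canonical K\"ahler metrics with normalizing constant $1$ on both sides and $\rho({\bf 0})={\bf 0}$, the polarized functional equation (cf.\ Equation~(\ref{Eq:PolFunEq}) with $k=1$ and target $\mathbb B^m$) reads $h_{\mathbb B^m}(\rho(w),\rho(\zeta))=h_{\mathbb B^n}(w,\zeta)$, i.e., writing $\rho=(\rho_1,\ldots,\rho_m)$,
\[ \sum_{\alpha=1}^{m}\rho_\alpha(w)\,\overline{\rho_\alpha(\zeta)}=\sum_{j=1}^{n}w_j\,\overline{\zeta_j}\qquad\text{for all }w,\zeta\in\mathbb B^n. \]
Expanding $\rho=\sum_{d\ge 1}\rho^{(d)}$ into homogeneous components (there is no constant term) and comparing the bi-homogeneous pieces of bidegree $(d,e)$ in $(w,\overline\zeta)$ on the two sides, every piece $\sum_\alpha\rho^{(d)}_\alpha(w)\overline{\rho^{(e)}_\alpha(\zeta)}$ with $(d,e)\ne(1,1)$ must vanish identically; taking $d=e\ge 2$ and $\zeta=w$ forces $\lVert\rho^{(d)}(w)\rVert_{\mathbb C^m}^2\equiv 0$, so $\rho=\rho^{(1)}$ is $\mathbb C$-linear, say $\rho(w)=Pw$ with $P\in M(m,n;\mathbb C)$ and $\overline P^{T}P={\bf I}_n$. (Equivalently, one may invoke the classical fact that a holomorphic isometry between complex hyperbolic space forms of the same holomorphic sectional curvature is totally geodesic.) Since $\overline P^{T}P={\bf I}_n$, the map $P$ is a linear isometry onto its image, whence $\rho(\mathbb B^n)=P(\mathbb B^n)=\mathbb B^m\cap V$, where $V:=\mathrm{Im}(P)\in\mathrm{Gr}(n,\mathbb C^m)$.

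Finally, since $m\ge n+1\ge 2$ and $1\le n\le m-1$, I would apply Proposition~\ref{pro:Structure_Slicing} to $f:(\mathbb B^m,kg_{\mathbb B^m})\to(\Omega,g_\Omega)$ with the subspace $V\in\mathrm{Gr}(n,\mathbb C^m)$ above and ${\bf v}={\bf 0}$, obtaining
\[ F(\mathbb B^n)=f\big(\rho(\mathbb B^n)\big)=f(V\cap\mathbb B^m)=f_n^\sharp(V)\cap f(\mathbb B^m)\subseteq f_n^\sharp(V)=:W, \]
where by Proposition~\ref{pro:Structure_Slicing} we have $W=df_{\bf 0}(V)\oplus\ker\overline{Jf({\bf 0})}^{T}$, a complex vector subspace of $\mathbb C^N$ of dimension $N-m+n$. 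Because $m\ge n+1$ this gives $\dim_{\mathbb C}W=N-m+n\le N-1<N$, so $W$ is proper and $F$ is linearly degenerate, as claimed. I expect the only substantive point to be the identification of $\rho$ with a $\mathbb C$-linear map via the bidegree comparison in the polarized functional equation; once that is in hand, the homogeneity reduction is routine and the conclusion is an immediate application of Proposition~\ref{pro:Structure_Slicing} together with the dimension count.
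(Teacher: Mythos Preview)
Your proof is correct and follows essentially the same route as the paper's: normalize so that both $f$ and $\rho$ fix the origin, identify $\rho(\mathbb B^n)$ with a linear section $\mathbb B^m\cap V$, and then apply Proposition~\ref{pro:Structure_Slicing} to $f$ with ${\bf v}={\bf 0}$ to obtain $F(\mathbb B^n)\subseteq W=df_{\bf 0}(V)\oplus\ker\overline{Jf({\bf 0})}^T$ of dimension $N-m+n$. The only difference is cosmetic: the paper cites Mok~\cite{Mok22} for the total geodesy of $\rho$ (hence $\widetilde\rho(\mathbb B^n)=\mathbb B^m\cap V$), whereas you give a direct and self-contained derivation of the $\mathbb C$-linearity of $\rho$ from the polarized functional equation by comparing bihomogeneous components.
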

\begin{proof}
Under the assumption, we have $f(w_0)={\bf 0}$, where $w_0=\rho({\bf 0})$.
Note that there is $\Psi\in \Aut(\mathbb B^m)$ such that 
$\Psi({\bf 0})=w_0$.
Then, we have $F=(f\circ \Psi)\circ (\Psi^{-1}\circ \rho)$ so that $\widetilde f:= f\circ \Psi$ and $\widetilde\rho:=\Psi^{-1}\circ \rho$ satisfy $\widetilde f({\bf 0})={\bf 0}$ and $\widetilde\rho({\bf 0})={\bf 0}$.
Moreover, $\widetilde \rho:(\mathbb B^n,g_{\mathbb B^n})\to (\mathbb B^{m},g_{\mathbb B^{m}})$ is a totally geodesic holomorphic isometric embedding, i.e., $(\widetilde\rho(\mathbb B^n),g_{\mathbb B^{m}}|_{\widetilde\rho(\mathbb B^n)}) \subset (\mathbb B^{m},g_{\mathbb B^{m}})$ is totally geodesic.
Therefore, $\widetilde\rho(\mathbb B^n) = \mathbb B^m\cap V$ for some complex $n$-dimensional vector subspace $V\subset \mathbb C^m$ (cf.\,Mok \cite[Proposition 2.1]{Mok22}).
In other words, we have $F(\mathbb B^n) = \widetilde f(\mathbb B^m\cap V)$.
Then, by Proposition \ref{pro:Structure_Slicing} we have
\[ F(\mathbb B^n) = \widetilde f(\mathbb B^m\cap V) = \widetilde f(\mathbb B^{m})\cap W \subset W, \]
where $W:=d\widetilde f_{\bf 0}(V)\oplus \ker\overline{J\widetilde f({\bf 0})}^T$ is a complex $(N-m+n)$-dimensional vector subspace of $\mathbb C^N$, i.e., $F$ is linearly degenerate.
\end{proof}
\begin{remark}
By Proposition {\rm\ref{Pro:Slicing=LS}}, if one can find a holomorphic isometry $F: (\mathbb B^n,g_{\mathbb B^n})\to (\Omega,g_\Omega)$ such that $F({\bf 0})={\bf 0}$, $1\le n\le p(\Omega)$, and $F(\mathbb B^n)\subset \Omega \Subset \mathbb C^N$ is linearly non-degenerate, i.e., $F(\mathbb B^n)\not\subseteq W$ for any proper complex vector subspace $W$ of $\mathbb C^N$, then $F$ is not obtained from a slicing of $\mathbb B^{m}$ through $f$ for any holomorphic isometry $f:(\mathbb B^{m},g_{\mathbb B^{m}})\to (\Omega,g_{\Omega})$, where $n+1\le m \le p(\Omega)+1$.
This may require certain geometric properties of the target irreducible bounded symmetric domain $\Omega\Subset \mathbb C^N$ because the author and Mok {\rm\cite{CM17}} have proved that when $\Omega\cong D^{\mathrm{IV}}_N$ for some $N\ge 3$, any holomorphic isometry $F: (\mathbb B^n,g_{\mathbb B^n})\to (\Omega,g_\Omega)$, $1\le n\le p(\Omega)=N-2$, is obtained from a slicing of $\mathbb B^{p(\Omega)+1}$ through $f$ for some holomorphic isometry $f:(\mathbb B^{p(\Omega)+1},g_{\mathbb B^{p(\Omega)+1}})\to (\Omega,g_{\Omega})$.
In other words, any holomorphic isometry $F: (\mathbb B^n,g_{\mathbb B^n})\to (D^{\mathrm{IV}}_N,g_{D^{\mathrm{IV}}_N})$ with $F({\bf 0})={\bf 0}$ and $1\le n\le p(D^{\mathrm{IV}}_N)=N-2$, is linearly degenerate for $N\ge 3$.
\end{remark}

\begin{corollary}\label{cor:not-contain_fin_man_pts}
Let $\Omega\Subset \mathbb C^N$ be an irreducible bounded symmetric domain of rank $2$ in its Harish-Chandra realization such that $\Omega$ is not biholomorphic to $D^{\mathrm{I}}_{2,q}$ for any $q\ge 5$.
Let $X_c$ be the compact dual of $\Omega$ and $X_c\hookrightarrow \mathbb P\big(\Gamma(X_c,\mathcal O(1))^*\big)\cong \mathbb P^{N'}$ be the first canonical embedding.
Then, there exist $N'-N+3$ distinct points $x_1,\ldots,x_{N'-N+3}$ in $\Omega$ such that the image of any holomorphic isometry from $(\Delta,g_\Delta)$ to $(\Omega,g_\Omega)$ does not contain all of the points $x_1,\ldots,x_{N'-N+3}$.
\end{corollary}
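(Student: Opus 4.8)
The plan is to show that, under the minimal (first canonical) embedding $\nu:\Omega\hookrightarrow X_c\hookrightarrow\mathbb P^{N'}$, the image of \emph{every} holomorphic isometry $g:(\Delta,g_\Delta)\to(\Omega,g_\Omega)$ spans a projective-linear subspace of $\mathbb P^{N'}$ of (projective) dimension at most $N'-N+1$. The corollary then follows at once: one picks $N'-N+3$ points of $\Omega$ whose $\nu$-images are in general position, hence span a $\mathbb P^{N'-N+2}$, and such a configuration cannot lie in any $\mathbb P^{\le N'-N+1}$, in particular not in $\nu(g(\Delta))$ for any such $g$.

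I would first treat the normalized case $g(\mathbf 0)=\mathbf 0$. By Equation (\ref{Eq:PolFunEq}) with $n=1$, $k=1$ we have $h_\Omega(g(w),g(\zeta))=1-w\overline\zeta$, and since $\Omega$ has rank $2$ every $G_l$ in (\ref{Eq:sp_form_poly_h}) is homogeneous of degree $2$, so each sign $(-1)^{\deg G_l}$ equals $1$ and the functional equation becomes
\[ \sum_{j=1}^N g_j(w)\overline{g_j(\zeta)}-\sum_{l=1}^{N'-N}G_l(g(w))\overline{G_l(g(\zeta))}=w\overline\zeta .\]
Writing $g(w)=\sum_{p\ge1}a_pw^p$ with $a_p\in\mathbb C^N$ and $G_l(g(w))=\sum_{p\ge2}b_{p,l}w^p$ with $b_p=(b_{p,l})_l\in\mathbb C^{N'-N}$, and comparing coefficients of $w^p\overline{\zeta}^{q}$, one checks that $\langle a_p,a_q\rangle-\langle b_p,b_q\rangle=\delta_{p1}\delta_{q1}$ for the standard Hermitian products; in particular $\lVert a_1\rVert=1$, $\langle a_1,a_q\rangle=0$ for $q\ge2$, and $\langle a_p,a_q\rangle=\langle b_p,b_q\rangle$ for all $p,q\ge2$. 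Since the rank of a Gram matrix equals the dimension of the span of the vectors, the last identity forces $\dim_{\mathbb C}\mathrm{span}\{a_p:p\ge2\}=\dim_{\mathbb C}\mathrm{span}\{b_p:p\ge2\}\le N'-N$; together with $a_1\perp\mathrm{span}\{a_p:p\ge2\}$ this gives $\dim_{\mathbb C}\mathrm{span}\{a_p:p\ge1\}\le N'-N+1$, so $g(\Delta)$ already lies in a linear subspace of $\mathbb C^N$ of dimension $\le N'-N+1$. Feeding the same computation into the linear span of the Taylor coefficients of $w\mapsto(1,g(w),G(g(w)))\in\mathbb C^{1+N'}$ — the coefficients being $(1,\mathbf 0,\mathbf 0)$, $(\mathbf 0,a_1,\mathbf 0)$ and $(\mathbf 0,a_p,b_p)$ for $p\ge2$, and the Gram matrix of the vectors $(a_p,b_p)$ for the positive-definite form being $2[\langle a_p,a_q\rangle]$ — shows that this span has dimension $\le 2+(N'-N)$, i.e.\ $\nu(g(\Delta))$ spans a projective-linear subspace of $\mathbb P^{N'}$ of dimension $\le N'-N+1$.

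Next I would remove the normalization. Given an arbitrary $g:(\Delta,g_\Delta)\to(\Omega,g_\Omega)$ and a point $p_0\in\Delta$, choose $\tau\in\Aut(\Delta)$ with $\tau(\mathbf 0)=p_0$ and, using homogeneity of $\Omega$ under the identity component of $\Aut(\Omega)$, an automorphism $\phi$ of $\Omega$ with $\phi(g(p_0))=\mathbf 0$; then $\widetilde g:=\phi\circ g\circ\tau$ is a holomorphic isometry with $\widetilde g(\mathbf 0)=\mathbf 0$ and $\widetilde g(\Delta)=\phi(g(\Delta))$. Since such a $\phi$ extends to $X_c$ and acts there by a projective-linear transformation of $\mathbb P^{N'}$ in the minimal embedding, it carries $\nu(g(\Delta))$ onto $\nu(\widetilde g(\Delta))$, and as projective-linear maps preserve the dimension of a projective-linear span the bound $N'-N+1$ holds for \emph{every} holomorphic isometry $g:(\Delta,g_\Delta)\to(\Omega,g_\Omega)$. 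To finish, recall that the first canonical embedding is given by a complete linear system, so $X_c$, hence the nonempty open set $\nu(\Omega)\subset X_c$, is non-degenerate in $\mathbb P^{N'}$; as $N'-N+2\le N'$ we may choose distinct points $x_1,\dots,x_{N'-N+3}\in\Omega$ one at a time so that $\nu(x_{i+1})$ avoids the projective-linear span of $\nu(x_1),\dots,\nu(x_i)$, whence $\nu(x_1),\dots,\nu(x_{N'-N+3})$ span a $\mathbb P^{N'-N+2}$. This configuration lies in no $\mathbb P^{\le N'-N+1}$, hence in $\nu(g(\Delta))$ for no holomorphic isometry $g:(\Delta,g_\Delta)\to(\Omega,g_\Omega)$; equivalently, the image of any such isometry fails to contain all of $x_1,\dots,x_{N'-N+3}$. (The excluded domains $D^{\mathrm{I}}_{2,q}$, $q\ge5$, are exactly those with $2N-N'\le1$, cf.\ Chan \cite[Lemma 4.1]{Ch18}; the standing hypothesis keeps us in the setting of Theorem \ref{Thm:Existence_SForm}.)

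The step I expect to be the main obstacle is the dimension estimate in the normalized case: one must keep careful track of which Gram matrices are being compared — the \emph{indefinite} one attached to $(g,G\circ g)$ that produces the functional-equation relations, versus the \emph{definite} one used to bound the span — and verify that prepending the constant coordinate $1$ and the first-order term raises the projective span by exactly $2$, so the bound is $N'-N+1$ and not larger. A secondary point needing care is the (standard) fact that automorphisms of $\Omega$ act projective-linearly on $X_c$ under the minimal embedding, which is what legitimizes the reduction to $g(\mathbf 0)=\mathbf 0$.
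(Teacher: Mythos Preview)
Your proof is correct and takes a genuinely different route from the paper's. The paper proceeds by quoting the factorization result of \cite{Ch18}: under the hypothesis $2N-N'\ge 2$, every holomorphic isometry $f:(\Delta,g_\Delta)\to(\Omega,g_\Omega)$ factors through a holomorphic isometry $(\mathbb B^{2N-N'},g_{\mathbb B^{2N-N'}})\to(\Omega,g_\Omega)$, and then applies Proposition~\ref{Pro:Slicing=LS} to conclude that any such $f$ with $\mathbf 0\in f(\Delta)$ has $f(\Delta)$ contained in an $(N'-N+1)$-dimensional linear subspace of $\mathbb C^N$. The paper then chooses its points \emph{inside} $\mathbb C^N$, taking $x_{N'-N+3}=\mathbf 0$ together with $N'-N+2$ linearly independent vectors in $\Omega$; the inclusion of $\mathbf 0$ is the device that forces any offending $f$ into the normalized situation, and the hypothesis $2N-N'\ge 2$ is used again to ensure that $N'-N+2\le N$ so that such independent vectors exist. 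By contrast, you bypass both the factorization theorem and Proposition~\ref{Pro:Slicing=LS} via a direct Gram-matrix computation from the functional equation, and you transport the span bound to \emph{all} isometries (not just those passing through $\mathbf 0$) by passing to $\mathbb P^{N'}$ and invoking the projective-linearity of $\Aut(\Omega)$ on $X_c$. What your approach buys is self-containedness and, in fact, a slightly stronger statement: your span bound $N'-N+1$ in $\mathbb P^{N'}$ holds without appealing to $2N-N'\ge 2$, so your argument would go through for $D^{\mathrm I}_{2,q}$, $q\ge 5$, as well---your closing parenthetical about ``keeping us in the setting of Theorem~\ref{Thm:Existence_SForm}'' is therefore unnecessary, since you never use that theorem. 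What the paper's approach buys is brevity: once the cited factorization is in hand, the argument is a two-line application of Proposition~\ref{Pro:Slicing=LS} together with an elementary choice of points in $\mathbb C^N$.
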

\begin{proof}
Under the assumptions, it follows from Chan \cite{Ch18} that $2N-N'>1$ and any holomorphic isometry $f:(\Delta,g_\Delta)\to(\Omega,g_\Omega)$ can be factorized as
$f=F\circ \rho$ for some holomorphic isometries $F:(\mathbb B^n,g_{\mathbb B^n})\to (\Omega,g_\Omega)$ and $\rho:(\Delta,g_\Delta)\to (\mathbb B^n,g_{\mathbb B^n})$, where $n:=n_0(\Omega)=2N-N'\ge 2$.
By Proposition \ref{Pro:Slicing=LS}, if ${\bf 0}\in f(\Delta)$, then $f(\Delta)$ is linearly degenerate, i.e., $f(\Delta)\subset W$ for some $(N'-N+1)$-dimensional complex vector subspace $W\subsetneq \mathbb C^N$.
We choose distinct points $x_1,\ldots,x_{N'-N+3}\in \Omega$ such that $x_{N'-N+3}={\bf 0}$ and $x_1,\ldots,x_{N'-N+2}\in \Omega\subset \mathbb C^N \smallsetminus \{{\bf 0}\}$ are $\mathbb C$-linearly independent when they are viewed as vectors in $\mathbb C^N$.
This shows that any holomorphic isometry $f:(\Delta,g_\Delta)\to(\Omega,g_\Omega)$ does not contain all of the above chosen points $x_1,\ldots,x_{N'-N+3}$, otherwise we would have $f(\Delta)\subset W$ for some $(N'-N+1)$-dimensional complex vector subspace $W\subsetneq \mathbb C^N$ and $W$ would contain the $\mathbb C$-linearly independent vectors $x_1,\ldots,x_{N'-N+2}$, a plain contradiction.
\end{proof}
\begin{remark}
In {\rm\cite{Dr02}}, Drinovec Drnov\v{s}ek proved that given any discrete subset $S:=\{x_n\}_{n\in \mathbb N}$ of a connected Stein manifold $X$ with $\dim_{\mathbb C}X\ge 2$, there is a proper holomorphic immersion $f$ from the unit disk $\Delta$ to $X$ such that $S\subset f(\Delta)$.
Note that any bounded symmetric domain is a Stein manifold. 
In other words, the corollary shows that holomorphic isometries from the unit disk to any bounded symmetric domain $\Omega$ with respect to the Bergman metrics up to normalizing constants are much more rigid than proper holomorphic maps from the unit disk to $\Omega$ in general.
\end{remark}

We have the following consequence of Theorem \ref{thm_CI_general} regarding the linearly degeneracy of holomorphic isometries between bounded symmetric domains with respect to the canonical K\"ahler metrics.

\begin{proposition}\label{Pro:Fac=LD}
Let $f:(D,g'_D)\to (\Omega,g'_\Omega)$ be a holomorphic isometry such that $f({\bf 0})={\bf 0}$, where $D\Subset\mathbb C^n$ and $\Omega\Subset \mathbb C^N$ are bounded symmetric domains equipped with some canonical K\"ahler metrics $g'_D$ and $g'_\Omega$ respectively.
Suppose there are holomorphic isometries $F:(D,g'_D)\to (U,g'_U)$ and $G:(U,g'_U)\to (\Omega,g'_\Omega)$ such that $f=G\circ F$, $F({\bf 0})={\bf 0}$ and $F$ is linearly degenerate, i.e., $F(D)$ lies inside a complex $k$-dimensional vector subspace of $\mathbb C^m$ for some $k$, $1\le k<m:=\dim_{\mathbb C}(U)$, where $U \Subset \mathbb C^m$ is a bounded symmetric domain equipped with some canonical K\"ahler metric $g'_U$.
Then, $f$ is linearly degenerate, i.e., $f(D) \subseteq W$ for some proper complex vector subspace $W$ of $\mathbb C^N$ with $\dim_{\mathbb C}(W)=N-m+k$.
\end{proposition}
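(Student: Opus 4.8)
The plan is to apply Theorem~\ref{thm_CI_general} to the ``outer'' isometry $G$ and to observe that the submersion it produces is the restriction of a \emph{surjective linear} map; the linear degeneracy of $F$ then propagates to $f$ simply by taking preimages of linear subspaces. First I would record that $G(\mathbf 0)=G(F(\mathbf 0))=f(\mathbf 0)=\mathbf 0$, using $F(\mathbf 0)=\mathbf 0$, so that Theorem~\ref{thm_CI_general} applies to $G:(U,g'_U)\to(\Omega,g'_\Omega)$ (both $U$ and $\Omega$ being bounded symmetric domains equipped with canonical K\"ahler metrics). It supplies an $m$-by-$N$ matrix $M(G)$ of rank $m$, a bounded balanced convex domain $D_G$ with $G(U)\subset D_G\subset\Omega$, and the holomorphic submersion $\widetilde\pi_G:D_G\to U$, $\widetilde\pi_G(z)=zM(G)^T$, with $\widetilde\pi_G\circ G=\mathrm{Id}_U$. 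The crucial feature for us is that $\widetilde\pi_G=\mathcal L_G|_{D_G}$ for the linear map $\mathcal L_G:\mathbb C^N\to\mathbb C^m$, $\mathcal L_G(z)=zM(G)^T$, and that $\mathrm{rank}(M(G))=m$ forces $\mathcal L_G$ to be surjective.

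Next I would push the degeneracy of $F$ forward through $\mathcal L_G$. Let $V_0\subseteq\mathbb C^m$ be a complex $k$-dimensional subspace with $F(D)\subseteq V_0$. For every $w\in D$ we have $f(w)=G(F(w))\in G(U)\subset D_G$, and hence
\[ \mathcal L_G\big(f(w)\big)=\widetilde\pi_G\big(G(F(w))\big)=(\widetilde\pi_G\circ G)\big(F(w)\big)=F(w)\in V_0 . \]
Therefore $f(D)\subseteq\mathcal L_G^{-1}(V_0)=:W$, and $W$ is a complex linear subspace of $\mathbb C^N$ since $\mathcal L_G$ is linear. Because $\mathcal L_G$ is surjective, $\dim_{\mathbb C}\ker\mathcal L_G=N-m$ and $V_0\subseteq\mathrm{Im}\,\mathcal L_G$, so $\dim_{\mathbb C}W=(N-m)+k$; as $1\le k<m$ this is strictly less than $N$, so $W\subsetneq\mathbb C^N$ is proper, $f(D)\subseteq W$, and $\dim_{\mathbb C}(W)=N-m+k$, as claimed.

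I do not expect a genuine obstacle here: once Theorem~\ref{thm_CI_general} is invoked, the argument is essentially a linear-algebra computation. The only points that need care are (i) verifying $G(\mathbf 0)=\mathbf 0$ so that Theorem~\ref{thm_CI_general} applies in the normalized form, (ii) checking that $f(D)\subset D_G$ so that the identity $\mathcal L_G\circ f=\widetilde\pi_G\circ(G\circ F)$ is legitimate, and (iii) making explicit that the submersion of Theorem~\ref{thm_CI_general} is genuinely (the restriction of) a surjective \emph{linear} map, which is precisely what makes $\mathcal L_G^{-1}(V_0)$ a linear subspace of the predicted dimension $N-m+k$ rather than merely a complex-analytic subvariety.
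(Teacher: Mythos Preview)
Your proof is correct and follows essentially the same approach as the paper's: both invoke Theorem~\ref{thm_CI_general} for $G$ (after noting $G(\mathbf 0)=\mathbf 0$) to obtain the rank-$m$ matrix $M(G)$ with $G(\zeta)M(G)^T=\zeta$, and then use this linear left inverse to transport the linear constraint on $F(D)$ back to $f(D)$. The only cosmetic difference is that the paper writes the subspace $V_0$ as $\ker A$ for an $m\times(m-k)$ matrix $A$ of rank $m-k$ and computes $W=\{z:zM(G)^TA=0\}$ via a rank estimate for $M(G)^TA$, whereas you phrase the same thing as $W=\mathcal L_G^{-1}(V_0)$ and read off the dimension from surjectivity of $\mathcal L_G$; these are the same subspace and the same computation.
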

\begin{proof}
Write $G=(G_1,\ldots,G_N)$, $F=(F_1,\ldots,F_m)$ and $f=(f_1,\ldots,f_N)$.
From the assumptions we have $G({\bf 0})={\bf 0}$.
By Theorem \ref{thm_CI_general}, there is a $m$-by-$N$ matrix $M(G)$ of rank $m$ such that
\[ (G_1(\zeta),\ldots,G_N(\zeta)) M(G)^T = (\zeta_1,\ldots,\zeta_m) \]
for all $(\zeta_1,\ldots,\zeta_m)\in U\Subset \mathbb C^m$.
Then, we have 
\[\begin{split}
 (f_1(w),\ldots,f_N(w))M(G)^T &=(G_1(F(w)),\ldots,G_N(F(w)) ) M(G)^T \\
 &= (F_1(w),\ldots,F_m(w)) 
\end{split}\]
for all $w\in D\Subset \mathbb C^n$.
Since $F$ is linearly degenerate, $F(D)$ lies inside a complex $k$-dimensional vector subspace of $\mathbb C^m$, for some $k$, $1\le k<m$, i.e., there is a $m$-by-$(m-k)$ matrix $A$ of rank $m-k$ such that $(F_1(w),\ldots,F_m(w))A\equiv 0$.
Then, $M(G)^TA$ is an $N$-by-$(m-k)$ matrix such that
\[ \begin{split}
{\rm rank}(M(G)^TA) &\ge {\rm rank}(M(G)^T)+{\rm rank}(A) - m \\
&= m+(m-k)-m = m-k > 0, 
\end{split}\]
i.e., $M(G)^TA$ is a non-zero rank-$(m-k)$ matrix, and
\[ (f_1(w),\ldots,f_N(w)) M(G)^TA = (F_1(w),\ldots,F_m(w))A \equiv 0. \]
This implies that $f$ is also linearly degenerate since its image lies inside the proper complex vector subspace
\[ W:=\{(z_1,\ldots,z_N)\in \mathbb C^N: (z_1,\ldots,z_N)M(G)^TA = 0 \} \]
of $\mathbb C^N$.
In addition, $\dim_{\mathbb C}(W)=N-(m-k)$.
\end{proof}

\subsection{Non-degeneracy of holomorphic isometries}

\subsubsection{Sufficiently non-degeneracy of the system of functional equations}
Recall the following definition made by Mok \cite{Mok12}.

\begin{defn}[cf.\,Mok \cite{Mok12}]\label{defn:suff_non-degen}
Let $D\Subset \mathbb C^n$ and $\Omega\Subset \mathbb C^N$ be bounded complete circular domains and let $F: (D,\lambda ds_D^2)\to (\Omega,ds_\Omega^2)$ be a holomorphic isometry such that $F({\bf 0})={\bf 0}$, where $\lambda>0$ is some real constant.
Then, the system of functional equations of $F$ is said to be \textbf{sufficiently non-degenerate} if and only if any irreducible component of the complex-analytic subvariety
\[ V_F:=\bigcap_{\zeta\in D} \left\{(w,z)\in D\times \Omega : K_\Omega(z,F(\zeta)) - A K_D(w,\zeta)^\lambda = 0 \right\} \]
containing the graph of $F$ is of complex dimension $n$, where $A:={K_\Omega({\bf 0},{\bf 0})\over  K_D({\bf 0},{\bf 0})^\lambda}>0$ is a real constant.
\end{defn}
\begin{remark}
Recall that a domain $U \subset \mathbb C^n$ is complete if and only if ${\bf 0}\in U$, and a circular domain $U'\subset \mathbb C^n$ is a domain such that $e^{i\theta}z\in U'$ for all $z\in U'$ and all $\theta\in \mathbb R$.
For example, every bounded symmetric domain is a bounded complete circular domain in its Harish-Chandra realization.
In general, we may consider a germ $F: (D,\lambda ds_D^2;{\bf 0})\to (\Omega,ds_\Omega^2;{\bf 0})$ of holomorphic isometry, and
\[ V_F:=\bigcap_{\zeta\in \mathbb B^n({\bf 0},\varepsilon)}\{(w,z)\in D\times \Omega: K_\Omega(z,F(\zeta))
- A K_D(w,\zeta)^\lambda = 0\} \]
for some sufficiently small $\varepsilon>0$.
Then, we could make the same definition of sufficiently non-degeneracy of the system of functional equations of $F$.
\end{remark}

Let $F$ be as in Definition \ref{defn:suff_non-degen}. One may ask if the sufficiently non-degeneracy of the system of functional equations of $F$ is preserved under the transformations of $F$ by the automorphisms of $D$ and $\Omega$. We let $\Upsilon\in \Aut(\Omega)$ and $\varphi\in \Aut(D)$ be such that $(\Upsilon\circ (F\circ \varphi))({\bf 0})={\bf 0}$.
Then, we also have the system of functional equations
\[ K_\Omega(z,(\Upsilon\circ (F\circ \varphi))(\zeta))
= A K_D(w,\zeta)^\lambda \]
for all $\zeta\in D$, whose common zero set in $D\times \Omega$ is denoted by $V_{\Upsilon\circ (F\circ \varphi)}$.
The question is whether the system of functional equations of $F$ is sufficiently non-degenerate if and only if the system of functional equations of $\Upsilon\circ (F\circ \varphi)$ is sufficiently non-degenerate. This can be done as follows.

Write $F_{\Upsilon,\varphi}:=\Upsilon\circ (F\circ \varphi)$ for simplicity.
Define $\hat\Phi \in \Aut(D\times \Omega)$ by $\hat\Phi(w,z):=(\varphi^{-1}(w),\Upsilon(z))$ for $(w,z)\in D\times \Omega$.
We have $\hat\Phi({\rm Graph}(F))={\rm Graph}(F_{\Upsilon,\varphi})$ and
\[ \begin{split}
\hat\Phi(V_F)
=&\{(\varphi^{-1}(w),\Upsilon(z))\in D\times \Omega: K_\Omega(z,F(\zeta))
= A K_D(w,\zeta)^\lambda\;\;\forall\;\zeta\in D\}\\
=&\{(w',z')\in D\times \Omega: 
K_\Omega(\Upsilon^{-1}(z'),F(\zeta))
= A K_D(\varphi(w'),\zeta)^\lambda\;\;\forall\;\zeta\in D\}
\end{split} \]
by putting $z':=\Upsilon(z)$ and $w'=\varphi^{-1}(w)$.
We may rewrite $K_\Omega(\Upsilon^{-1}(z'),F(\zeta))
= A K_D(\varphi(w'),\zeta)^\lambda$ as
\[ \begin{split}
&K_\Omega(z',F_{\Upsilon,\varphi}(\zeta')) \det J\Upsilon(\Upsilon^{-1}(z')) \cdot \overline{\det J\Upsilon(F(\varphi(\zeta')))}\\
=& A K_D(w',\zeta')^\lambda 
\left(\det J\varphi(w') \cdot \overline{\det J\varphi(\zeta')} \right)^\lambda,
\end{split}\]
where $\zeta':=\varphi^{-1}(\zeta)$.
Putting $z'=F_{\Upsilon,\varphi}(w')=\Upsilon(F(\varphi(w')))$ in the above equation we have
\[ \det J\Upsilon(F (\varphi(w')) ) \cdot \overline{\det J\Upsilon(F(\varphi(\zeta')))}
=\left(\det J\varphi(w') \cdot \overline{\det J\varphi(\zeta')} \right)^\lambda \]
for all $w',\zeta'\in D$ since $K_\Omega(F_{\Upsilon,\varphi}(w'),F_{\Upsilon,\varphi}(\zeta'))
=A K_D(w',\zeta')^\lambda$.
Hence, the system of equations becomes
\begin{equation} \label{Eq:Sys_FE_New_Trans1}
K_\Omega(z',F_{\Upsilon,\varphi}(\zeta')) \det J\Upsilon(\Upsilon^{-1}(z'))
= A K_D(w',\zeta')^\lambda 
\det J\Upsilon(F (\varphi(w')) ). 
\end{equation}
for all $\zeta'\in D$.
Putting $\zeta'={\bf 0}$, we have
\[ \det J\Upsilon(\Upsilon^{-1}(z'))=\det J\Upsilon(F (\varphi(w')) )\]
for all solutions $(w',z')\in D\times \Omega$ of the system of equations (\ref{Eq:Sys_FE_New_Trans1}) since $A={K_\Omega({\bf 0},{\bf 0})\over K_D({\bf 0},{\bf 0})^\lambda}$, and $K_\Omega(z,{\bf 0})$, $K_D(w,{\bf 0})$ are positive constants by Mok \cite[Section 1.1, p.\,1620]{Mok12}.
Thus, we have
\[ \hat\Phi(V_F) \subseteq \{(w',z')\in D\times \Omega: K_\Omega(z',F_{\Upsilon,\varphi}(\zeta')) 
= A K_D(w',\zeta')^\lambda\;\;\forall\;\zeta'\in D\},\]
i.e., $\hat\Phi(V_F)\subseteq V_{\Upsilon\circ (F\circ \varphi)}$.
Conversely, we consider the inverse map $\hat\Phi^{-1}\in {\rm Aut}(D\times \Omega)$ given by $\hat\Phi^{-1}(w',z'):=(\varphi(w'),\Upsilon^{-1}(z'))$ for $(w',z')\in D\times \Omega$. Then, by similar computations we have $\hat\Phi^{-1}(V_{\Upsilon\circ (F\circ \varphi)})\subseteq V_{F}$ so that $V_{\Upsilon\circ (F\circ \varphi)} \subseteq \hat\Phi(V_{F})$. Therefore, $\hat\Phi(V_F)= V_{\Upsilon\circ (F\circ \varphi)}$.
Now, for any $w\in D$ we have
\[ \begin{split}
\dim_{(w,F(w))}(V_F)
=& \dim_{(\varphi^{-1}(w),\Upsilon(F(w)))}
\left(\hat\Phi(V_F)\right)\\
=&\dim_{(\varphi^{-1}(w),\Upsilon(F(w)))}
(V_{\Upsilon\circ (F\circ \varphi)})\\
=& \dim_{(w',F_{\Upsilon,\varphi}(w'))}
(V_{F_{\Upsilon,\varphi}}),
\end{split}\]
where $w'=\varphi^{-1}(w)\in D$.
Note that $\dim_{\mathbb C}({\rm Graph}(F))$ $=$ $\dim_{\mathbb C}({\rm Graph}(F_{\Upsilon,\varphi}))$ $=$ $n$.
Hence, any irreducible component of $V_F$ containing ${\rm Graph}(F)$ is of complex dimension $n$ if and only if any irreducible component of $V_{F_{\Upsilon,\varphi}}$ containing ${\rm Graph}(F_{\Upsilon,\varphi})$ is of complex dimension $n$.
To summarize, we have obtained the following proposition.

\begin{proposition}\label{Pro:Sys_FE_suff_ND_Well-def_Equiv_class}
Let $D\Subset \mathbb C^n$ and $\Omega\Subset \mathbb C^N$ be bounded complete circular domains and let $F: (D,\lambda ds_D^2)\to (\Omega,ds_\Omega^2)$ be a holomorphic isometry such that $F({\bf 0})={\bf 0}$, where $\lambda>0$ is some real constant.
Let $\Upsilon\in \Aut(\Omega)$ and $\varphi\in \Aut(D)$ be such that $(\Upsilon\circ (F\circ \varphi))({\bf 0})={\bf 0}$.
Then, the system of functional equations of $F$ is sufficiently non-degenerate if and only if the system of functional equations of $\Upsilon\circ (F\circ \varphi)$ is sufficiently non-degenerate.
\end{proposition}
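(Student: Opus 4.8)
The plan is to realize the passage $F\mapsto F_{\Upsilon,\varphi}:=\Upsilon\circ(F\circ\varphi)$ geometrically by a single biholomorphism of the product $D\times\Omega$ that carries the variety $V_F$ onto $V_{F_{\Upsilon,\varphi}}$ and the graph of $F$ onto the graph of $F_{\Upsilon,\varphi}$. Since biholomorphisms preserve local complex dimension, the condition defining sufficient non-degeneracy then transfers verbatim. Concretely, I would set $\hat\Phi\in\Aut(D\times\Omega)$, $\hat\Phi(w,z):=(\varphi^{-1}(w),\Upsilon(z))$, record the trivial identity $\hat\Phi({\rm Graph}(F))={\rm Graph}(F_{\Upsilon,\varphi})$, and aim to prove $\hat\Phi(V_F)=V_{F_{\Upsilon,\varphi}}$.

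To get one inclusion, substitute $w'=\varphi^{-1}(w)$, $z'=\Upsilon(z)$ into the defining system $K_\Omega(z,F(\zeta))=AK_D(w,\zeta)^\lambda$ ($\zeta\in D$) and apply the standard transformation law $K_U(\phi(a),\phi(b))\,\det J\phi(a)\,\overline{\det J\phi(b)}=K_U(a,b)$ for $\phi\in\Aut(U)$ to both $\Upsilon$ on $\Omega$ and $\varphi$ on $D$; using in addition the functional equation $K_\Omega(F_{\Upsilon,\varphi}(w'),F_{\Upsilon,\varphi}(\zeta'))=AK_D(w',\zeta')^\lambda$ satisfied by $F_{\Upsilon,\varphi}$ itself to cancel the $\zeta'$-dependent Jacobian factors, one arrives at
\[ K_\Omega(z',F_{\Upsilon,\varphi}(\zeta'))\,\det J\Upsilon(\Upsilon^{-1}(z'))=AK_D(w',\zeta')^\lambda\,\det J\Upsilon(F(\varphi(w'))),\qquad \zeta'\in D. \]
The main obstacle is that this system still carries the spurious factors $\det J\Upsilon(\Upsilon^{-1}(z'))$ and $\det J\Upsilon(F(\varphi(w')))$, which a priori differ off the graph, so it is not yet the system cutting out $V_{F_{\Upsilon,\varphi}}$. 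I would remove them by specializing $\zeta'={\bf 0}$: since $A=K_\Omega({\bf 0},{\bf 0})/K_D({\bf 0},{\bf 0})^\lambda$ and $K_\Omega(\cdot,{\bf 0})$, $K_D(\cdot,{\bf 0})$ are nonzero constants on bounded complete circular domains (the only place completeness and circularity are used, cf.\,Mok \cite{Mok12}), the specialized equation forces $\det J\Upsilon(\Upsilon^{-1}(z'))=\det J\Upsilon(F(\varphi(w')))$ on the whole solution set, so the two systems coincide there. This gives $\hat\Phi(V_F)\subseteq V_{F_{\Upsilon,\varphi}}$, and the same computation applied to $\hat\Phi^{-1}(w',z')=(\varphi(w'),\Upsilon^{-1}(z'))$ yields the reverse inclusion, hence $\hat\Phi(V_F)=V_{F_{\Upsilon,\varphi}}$.

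It then remains to unwind dimensions. Since $\hat\Phi$ is biholomorphic, $\dim_{(w,F(w))}(V_F)=\dim_{(\varphi^{-1}(w),\,F_{\Upsilon,\varphi}(\varphi^{-1}(w)))}(V_{F_{\Upsilon,\varphi}})$ for every $w\in D$, and $\dim_{\mathbb C}{\rm Graph}(F)=\dim_{\mathbb C}{\rm Graph}(F_{\Upsilon,\varphi})=n$; hence every irreducible component of $V_F$ containing ${\rm Graph}(F)$ has complex dimension $n$ exactly when the analogous statement holds for $V_{F_{\Upsilon,\varphi}}$, which is the claim. Beyond the Jacobian-cancellation step, the argument is routine bookkeeping with the Bergman transformation rule.
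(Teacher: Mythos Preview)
Your proposal is correct and follows essentially the same route as the paper: define $\hat\Phi(w,z)=(\varphi^{-1}(w),\Upsilon(z))$, use the Bergman kernel transformation law together with the functional equation of $F_{\Upsilon,\varphi}$ to reduce to the displayed system with residual Jacobian factors, then specialize $\zeta'={\bf 0}$ (exploiting that $K_\Omega(\cdot,{\bf 0})$ and $K_D(\cdot,{\bf 0})$ are constants on bounded complete circular domains) to force those factors to coincide, yielding $\hat\Phi(V_F)=V_{F_{\Upsilon,\varphi}}$ and hence the dimension equivalence. The identification of the Jacobian-cancellation step as the only nontrivial point is exactly right.
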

\begin{remark}
As a special case, Proposition {\rm\ref{Pro:Sys_FE_suff_ND_Well-def_Equiv_class}} holds when $D$ and $\Omega$ are bounded symmetric domains in their Harish-Chandra realizations.
More generally, we may consider a germ $F: (D,\lambda ds_D^2;{\bf 0})\to (\Omega,ds_\Omega^2;{\bf 0})$ of holomorphic isometry.
Then, Proposition {\rm\ref{Pro:Sys_FE_suff_ND_Well-def_Equiv_class}} still holds under the following extra assumption.
\begin{enumerate}
\item[($\star$)] Denoting $D_0\subset D$ the domain of the germ $F$, $\varphi\in \Aut(D)$ is chosen so that ${\bf 0}\in \varphi^{-1}(D_0)$.
\end{enumerate}
This is only to make sure that we have the well-defined germ $\Upsilon\circ (F\circ \varphi): (D,\lambda ds_D^2;{\bf 0})$ $\to$ $(\Omega,ds_\Omega^2;{\bf 0})$ of holomorphic isometry, and the domain of $\Upsilon\circ (F\circ \varphi)$ is $\varphi^{-1}(D_0)$.
\end{remark}

\subsubsection{Linearly non-degeneracy}
Let $D\Subset \mathbb C^n$ and $\Omega\Subset \mathbb C^N$ be bounded complete circular domains and let $F: (D,\lambda ds_D^2;{\bf 0})\to (\Omega,ds_\Omega^2;{\bf 0})$ be a germ of holomorphic isometry, where $\lambda>0$ is a real constant.
Suppose the system of functional equations of $F$ is not sufficiently non-degenerate.
In \cite{Mok12}, Mok has proved that there exists a family $\{h_\alpha\}_{\alpha\in {\bf A} }$ of extremal functions on $\Omega$ such that $F(\mathbb B^n({\bf 0},\varepsilon))\subset \bigcap_{\alpha\in {\bf A}} {\rm{Zero}}(h_\alpha)$, where $\varepsilon>0$ is sufficiently small such that $\mathbb B^n({\bf 0},\varepsilon)$ is contained in the domain of $F$.
Denoting by $H^2(\Omega)$ the space of square-integrable holomorphic functions on $\Omega \Subset \mathbb C^N$ with respect to the Lebesgue measure on $\mathbb C^N$, $h_\eta\in H^2(\Omega)$ is called an \emph{extremal function} adapted to a non-zero holomorphic tangent vector $\eta \in T_{x}(\Omega)$, $x\in \Omega$, whenever the maximum of $|dh(\eta)|$ is attained at $h=h_\eta$ among all $h\in H^2(\Omega)$ of unit norm satisfying $h(x)=0$. We refer the readers to Mok \cite[pp.\,1626--1627]{Mok12} for details.
When $D$ and $\Omega$ are bounded symmetric domains such that $\Omega$ is irreducible, in Chan-Mok \cite{CM17} we have shown that $ {\rm{Zero}}(h_\alpha)= {\rm{Zero}}(h'_\alpha|_\Omega)$ for some holomorphic function $h'_\alpha$ on $\mathbb C^N$ that is a $\mathbb C$-linear combination of $H_1,\ldots,H_{N'}$, where $H_l$'s are in the expression $h_\Omega(z,\xi)=1+\sum_{l=1}^{N'} H_l(z)\overline{H_l(-\xi)}$ with $H_j(z)=z_j$, $1\le j\le N$, and $H_l(z):=G_{l-N}(z)$, $N+1\le l\le N'$, in Equation {\rm(}\ref{Eq:sp_form_poly_h}{\rm)}.
Note that by Mok \cite{Mok12}, $F$ extends to a global holomorphic isometry from $(D,\lambda ds_D^2)$ to $(\Omega,ds_\Omega^2)$, which is still denoted by $F$ for simplicity.

It's not clear if $h'_\alpha$ is actually a linear function on $\mathbb C^N$ for some $\alpha\in {\bf A}$.
Therefore, it is natural to ask if such a holomorphic isometry $F$ is actually affine-linearly degenerate, i.e., $F(D)\subset W$ for some proper affine-linear subspace $W\subsetneq \mathbb C^N$.
Indeed, we have the affirmative answer to this question.
More precisely, we will prove that for any holomorphic isometry $F$ between bounded symmetric domains with respect to the Bergman metrics up to a scalar constant, linearly non-degeneracy of the image of $F$ implies the system of functional equations of $F$ is sufficiently non-degenerate.
We first deal with the case of irreducible bounded symmetric domains, as follows.

\begin{proposition}\label{Pro:Linear_Nondegen1}
Let $F: (D,\lambda g_{D})\to (\Omega,g_\Omega)$ be a holomorphic isometry such that $F({\bf 0})={\bf 0}$, where $D\Subset \mathbb C^n$ and $\Omega\Subset \mathbb C^N$ are irreducible bounded symmetric domains and $\lambda>0$ is a real constant.
If $F$ is affine-linearly non-degenerate in the sense that $F(D)\not\subset W$ for any proper complex affine-linear subspace $W\subsetneq \mathbb C^N$, then the system of functional equations of $F$ is sufficiently non-degenerate.
\end{proposition}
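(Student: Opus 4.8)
The plan is to argue by contrapositive: assuming the system of functional equations of $F$ is \emph{not} sufficiently non-degenerate, I will produce a proper complex affine-linear subspace $W\subsetneq \mathbb C^N$ containing $F(D)$, thereby contradicting the affine-linear non-degeneracy hypothesis. The starting point is the discussion immediately preceding the statement: since the system is not sufficiently non-degenerate, Mok's result in \cite{Mok12} gives a nonempty family $\{h_\alpha\}_{\alpha\in\mathbf A}$ of extremal functions on $\Omega$ with $F(\mathbb B^n(\mathbf 0,\varepsilon))\subset\bigcap_{\alpha}\mathrm{Zero}(h_\alpha)$, and by Chan-Mok \cite{CM17} each $\mathrm{Zero}(h_\alpha)=\mathrm{Zero}(h'_\alpha|_\Omega)$ where $h'_\alpha$ is a $\mathbb C$-linear combination of the functions $H_1,\ldots,H_{N'}$ (the linear coordinates $z_1,\ldots,z_N$ together with the higher-degree polynomials $G_1,\ldots,G_{N'-N}$ from \eqref{Eq:sp_form_poly_h}). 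By analytic continuation (Mok's extension theorem), $F(D)\subset\mathrm{Zero}(h'_\alpha|_\Omega)$ for every $\alpha$, and since $F({\bf 0})={\bf 0}$ each $h'_\alpha$ has no constant term, so it suffices to show that at least one $h'_\alpha$ can be taken to be a genuine (nonzero) linear form; then $W:=\{z:h'_\alpha(z)=0\}$ does the job.

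The first main step is therefore to understand the vanishing locus of $h'_\alpha$ more precisely. Write $h'_\alpha = L_\alpha + Q_\alpha$, where $L_\alpha(z)=\sum_{j=1}^N c_j^{\alpha} z_j$ is the linear part and $Q_\alpha(z)=\sum_{l=1}^{N'-N} d_l^{\alpha} G_l(z)$ collects the higher-degree terms; I want to rule out the case that $L_\alpha\equiv 0$ for all $\alpha$. Here I would exploit the explicit structure coming from Proposition \ref{pro:Structure_Slicing} and Theorem \ref{thm:HoloSubmersion1}: the linear map $L_f=\overline{Jf({\bf 0})}^T$ already encodes a large family of linear functions that are \emph{not} identically zero on $F(D)$ (indeed $L_f\circ F = \lambda\,\mathrm{id}$), and the point is to control how the $G_l$'s interact with the tangent space $df_{\bf 0}(\mathbb C^n)=Jf({\bf 0})\mathbb C^n$. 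More concretely, differentiating the relation $h'_\alpha(F(z))\equiv 0$ at $z={\bf 0}$ forces $L_\alpha(Jf({\bf 0})v)=0$ for all $v\in\mathbb C^n$, i.e. $L_\alpha$ annihilates the image of $df_{\bf 0}$; if in addition $F$ were affine-linearly degenerate the linear span of $F(D)$ would be a proper subspace, and I want to show conversely that the \emph{existence} of a nontrivial relation $L_\alpha+Q_\alpha\equiv 0$ on $F(D)$ already implies linear degeneracy. The key algebraic input is that $h_\Omega(z,\overline z)$, being the defining polynomial of an irreducible bounded symmetric domain of rank $\ge 2$, has the rigid form \eqref{Eq:sp_form_poly_h} in which the $G_l$ are homogeneous of degree $\ge 2$ and $\mathbb C$-linearly independent modulo the linear forms; combined with the functional equation \eqref{Eq:PolFunEq}/its polarization, this should pin down the $G_l(F(z))$ in terms of the lower-order data.

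The technical heart — and the step I expect to be the main obstacle — is showing that one cannot have a relation $\sum_l d_l^\alpha G_l(F(z))\equiv 0$ with the $d_l^\alpha$ not all zero unless $F(D)$ already lies in a proper linear subspace. Equivalently, one must show that if every extremal function restricts on $F(D)$ to a combination of the $G_l$'s alone (purely higher-order, no linear part), this still forces linear degeneracy of $F$. My plan here is to use the reproducing/extremal characterization: the $h_\alpha$ span (the closure of) the space of $H^2(\Omega)$-functions vanishing on the analytic germ of $F$ at ${\bf 0}$ to first order along the normal directions, and the Chan-Mok identification says this space is spanned by restrictions of the $H_l$. One then analyzes the $\mathbb C$-linear map sending $(c_1,\ldots,c_N,d_1,\ldots,d_{N'-N})\mapsto (L+Q)|_{F(D)}$; affine-linear non-degeneracy of $F$ says the kernel restricted to the $(d_l)$-coordinates, i.e. relations among the $G_l(F(z))$, is trivial — this is precisely the content of $F$ not lying in any $W$ — and hence any nonzero extremal $h'_\alpha$ must have nonzero linear part, giving the desired $W$ and the contradiction. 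I would carry out this linear-algebra argument carefully using Equation \eqref{Eq:LinearEq1} and $\overline{Jf({\bf 0})}^TJf({\bf 0})=\lambda\mathbf I_n$ to track ranks; the delicate point is relating "relation among $G_l\circ F$" to "linear degeneracy of $F$", which I expect to follow from the functional equation by expanding both sides in homogeneous degree and matching the degree-$\ge 2$ parts, but making this rigorous for arbitrary irreducible $\Omega$ of rank $\ge 2$ is where the real work lies.
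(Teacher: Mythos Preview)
Your contrapositive strategy via extremal functions is a natural first attempt, but it contains a genuine gap at precisely the step you flag as delicate, and the paper takes an entirely different, direct route.

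The gap: you assert that ``affine-linear non-degeneracy of $F$ says the kernel restricted to the $(d_l)$-coordinates, i.e.\ relations among the $G_l(F(z))$, is trivial --- this is precisely the content of $F$ not lying in any $W$.'' This is not correct. Affine-linear non-degeneracy says only that no nontrivial \emph{linear} form $\sum_j c_j z_j$ vanishes on $F(D)$; it says nothing about relations among the degree-$\ge 2$ polynomials $G_l$ restricted to $F(D)$. A linearly non-degenerate subvariety can perfectly well lie on a quadric or higher-degree hypersurface. So from a nontrivial $h'_\alpha = L_\alpha + Q_\alpha$ vanishing on $F(D)$ you cannot conclude $L_\alpha\not\equiv 0$ without further argument --- and indeed the paper's own discussion just before the proposition explicitly notes that ``it's not clear if $h'_\alpha$ is actually a linear function on $\mathbb C^N$ for some $\alpha$.'' Your suggested remedy (expand the functional equation by homogeneous degree) may ultimately be workable, but you have not carried it out, and it is not obvious how to do so for a general irreducible $\Omega$.

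The paper instead argues directly, without passing through extremal functions at all. Set $\Psi_\zeta(w,z):=h_\Omega(z,F(\zeta))-h_D(w,\zeta)^\lambda$ and use that
\[
V_F=\left\{(w,z)\in D\times\Omega:\ \frac{\partial^{|I|}}{\partial\overline\zeta^I}\Psi_\zeta(w,z)\Big|_{\zeta={\bf 0}}=0\ \text{ for all }|I|\ge 1\right\}.
\]
Affine-linear non-degeneracy of $F$ (with $F({\bf 0})={\bf 0}$) is equivalent to the Taylor coefficient vectors $\partial^{|I|}F/\partial\zeta^I({\bf 0})$ spanning $\mathbb C^N$; pick $N$ multi-indices $I_1,\ldots,I_N$ for which these vectors are linearly independent, and let $V_1\supseteq V_F$ be cut out by the corresponding $N$ equations $\Phi_\mu:=\partial^{|I_\mu|}\Psi_\zeta/\partial\overline\zeta^{I_\mu}\big|_{\zeta={\bf 0}}=0$. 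Since each $G_l$ is homogeneous of degree $\ge 2$, the $z$-block of the Jacobian at the origin is
\[
J_z\Phi({\bf 0},{\bf 0})=\left(-\,\overline{\frac{\partial^{|I_l|}F^k}{\partial\zeta^{I_l}}({\bf 0})}\right)_{1\le l,k\le N},
\]
which has full rank $N$ by the choice of the $I_\mu$. Hence $V_1$ is smooth of dimension $n$ at $({\bf 0},{\bf 0})$, so its irreducible component through the origin is exactly $\mathrm{Graph}(F)$, and the same then holds for $V_F\subseteq V_1$. The hypothesis is used exactly once, to produce the spanning set of Taylor derivatives; no analysis of the $G_l\circ F$ or of extremal functions is needed.
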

\begin{proof}
From Chan-Mok \cite{CM17}, we have
\[ h_\Omega(F(w),F(\zeta)) = h_D(w,\zeta)^\lambda \]
and $h_\Omega(z,\xi) = 1-\sum_{j=1}^N z_j\overline{\xi_j} + \sum_{l=N+1}^{N'}(-1)^{\deg G_l} G_l(z)\overline{G_l(\xi)}$, where $G_l(z)$, $N+1\le l\le N'$, are homogeneous polynomials in $z$ of degree $\ge 2$.
Write $F=(F^1,\ldots,F^N)$ and let
$\Psi_\zeta(w,z):= 
h_\Omega(z,F(\zeta))-h_D(w,\zeta)^\lambda$ for any $\zeta\in D$.
Let $V\subset D\times \Omega$ be a complex-analytic subvariety defined by
\begin{equation}\label{Eq:Vari_Sys_FE1}
V:=\bigcap_{\zeta\in D} \{ (w,z)\in D\times \Omega: \Psi_\zeta(w,z)=0 \}.
\end{equation}
In Chan-Mok \cite{CM17}, we observed that
\begin{equation}\label{Eq:Vari_Sys_FE2}
 V=\left\{(w,z)\in D\times \Omega:
{\partial^{|I|}\over \partial \overline\zeta^I} \Psi_\zeta(w,z)\big|_{\zeta={\bf 0}} = 0,\;\forall \;I,\;|I|\ge 1\right\}.
\end{equation}
Note that
\[\begin{split}
&{\partial^{|I|}\over \partial \overline\zeta^I} \Psi_\zeta(w,z)\bigg|_{\zeta={\bf 0}}\\
=& 
-\sum_{j=1}^N \overline{{\partial^{|I|}F^j\over \partial \zeta^I}({\bf 0})}  z_j + \sum_{l=N+1}^{N'}(-1)^{\deg G_l} {\partial^{|I|}\over \partial \overline\zeta^I}\overline{G_l(F(\zeta))}\bigg|_{\zeta={\bf 0}}G_l(z)\\
&- {\partial^{|I|}\over \partial \overline\zeta^I} h_D(w,\zeta)^\lambda\bigg|_{\zeta={\bf 0}}
\end{split}\]
for multi-indices $I$ with $|I|\ge 1$.
Since $F$ is linearly non-degenerate, there exist distinct multi-indices $I_1,\ldots,I_N$ such that the complex linear span of ${\partial^{|I_\mu|}F\over \partial \zeta^{I_\mu}}({\bf 0})$, $1\le \mu\le N$, is the whole $\mathbb C^N$ if we view $F(w)=(F^1(w),\ldots,F^N(w))$ as a vector in $\mathbb C^N$ for each $w\in D$.
Let $V_1\subset D\times \Omega$ be a complex-analytic subvariety defined by the equations
${\partial^{|I_\mu|}\over \partial \overline\zeta^{I_\mu}} \Psi_\zeta(w,z)\bigg|_{\zeta={\bf 0}} = 0$
for $1\le \mu\le N$.
Write $\Phi_\mu(w,z):={\partial^{|I_\mu|}\over \partial \overline\zeta^{I_\mu}} \Psi_\zeta(w,z)\big|_{\zeta={\bf 0}}$, $\Phi:=(\Phi_1,\ldots,\Phi_N)$, $J_z \Phi 
:=\begin{pmatrix}{\partial\over \partial z_j}\Phi_i(w,z) \end{pmatrix}_{1\le i \le N,\;1\le j\le N}$ and $J_w \Phi$ $:=$ $\begin{pmatrix}{\partial\over \partial w_j}\Phi_i(w,z)\end{pmatrix}_{1\le i \le N,\;1\le j\le n}$. Then, the Jacobian matrix of $V_1$ at $({\bf 0},{\bf 0})$ is the $N\times (n+N)$ matrix
$M:= J\Phi ({\bf 0},{\bf 0}) = \begin{bmatrix} J_w \Phi ({\bf 0},{\bf 0}) & J_z \Phi ({\bf 0},{\bf 0})
\end{bmatrix}$, which is of full rank $N$ because
\[ J_z \Phi ({\bf 0},{\bf 0})=\begin{pmatrix}
-\overline{{\partial^{|I_l|}F^k\over \partial \zeta^{I_l}}({\bf 0})}
\end{pmatrix}_{1\le l,k \le N} \]
is of full rank $N$ from the above settings.
Therefore, $V_1$ is smooth and of dimension $n$ at $({\bf 0},{\bf 0})$.
Since $\mathrm{Graph}(F)\subset V_1$, $\mathrm{Graph}(F)$ is the irreducible component of $V_1$ containing $({\bf 0},{\bf 0})$.
On the other hand, we have $V\subseteq V_1$ so that any irreducible component of $V$ containing $\mathrm{Graph}(F)$ is contained in some irreducible component of $V_1$.
Hence, $\mathrm{Graph}(F)$ is the irreducible component of $V$ containing $\mathrm{Graph}(F)$, which is of dimension $n$.
In particular, the system of functional equations of $F$ is sufficiently non-degenerate in the sense of Mok \cite{Mok12}.
\end{proof}

One can easily obtain the following generalization of Proposition \ref{Pro:Linear_Nondegen1}.

\begin{proposition}\label{Pro:Linear_Nondegen2}
Let $F: (D,\lambda ds_D^2)\to (\Omega,ds_\Omega^2)$ be a holomorphic isometry such that $F({\bf 0})={\bf 0}$, where $D\Subset \mathbb C^n$ and $\Omega\Subset \mathbb C^N$ are bounded symmetric domains and $\lambda>0$ is a real constant.
Then, the assertion of Proposition {\rm\ref{Pro:Linear_Nondegen1}} still holds.
\end{proposition}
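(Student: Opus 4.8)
The plan is to adapt the proof of Proposition~\ref{Pro:Linear_Nondegen1}, the only new feature being the possible reducibility of $D$ and $\Omega$. First I would record the product structure. Writing $D=D_1\times\cdots\times D_k$ and $\Omega=\Omega_1\times\cdots\times\Omega_m$ for the decompositions into irreducible factors in their Harish-Chandra realizations, the Bergman kernel factorizes as $K_\Omega(z,\xi)=\prod_{j=1}^m K_{\Omega_j}(z^j,\xi^j)$ with $K_{\Omega_j}(z^j,\xi^j)=c_j\,h_{\Omega_j}(z^j,\xi^j)^{-p(\Omega_j)}$, where $c_j>0$, $p(\Omega_j)$ is the genus of $\Omega_j$, and each $h_{\Omega_j}$ has the form in Equation~(\ref{Eq:sp_form_poly_h}) (or equals $1-\sum_k z^j_k\overline{\xi^j_k}$ when $\Omega_j$ is a complex unit ball); the same holds for $D$, so in particular $ds_\Omega^2$ and $ds_D^2$ are the canonical K\"ahler metrics with the genus weights.

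Since $F({\bf 0})={\bf 0}$ and $D,\Omega$ are bounded complete circular domains, the standard functional-equation argument (cf.\,Mok~\cite{Mok12}, Chan~\cite{Ch16}) yields the polarized functional equation $K_\Omega(F(w),F(\zeta))=A\,K_D(w,\zeta)^\lambda$ for all $w,\zeta\in D$, with $A=K_\Omega({\bf 0},{\bf 0})/K_D({\bf 0},{\bf 0})^\lambda>0$. Setting $\Psi_\zeta(w,z):=K_\Omega(z,F(\zeta))-A\,K_D(w,\zeta)^\lambda$ and using, as in the proof of Proposition~\ref{Pro:Linear_Nondegen1}, that $\zeta\mapsto\Psi_\zeta(w,z)$ extends to an anti-holomorphic function on the connected domain $D$ (here one uses that $K_D$ is zero-free on $D\times D$), one obtains
\[ V_F=\Big\{(w,z)\in D\times\Omega:\ \frac{\partial^{|I|}}{\partial \overline\zeta^{I}}\Psi_\zeta(w,z)\Big|_{\zeta={\bf 0}}=0\ \ \text{for all }I\text{ with }|I|\ge 1\Big\}. \]

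The next step is to extract the part of $K_\Omega$ that is linear in $z$. Because every $G_l$ in~(\ref{Eq:sp_form_poly_h}) is homogeneous of degree $\ge 2$, one has $h_{\Omega_j}({\bf 0},\xi^j)=1$ and $\partial_{z_a}h_{\Omega_j}(z^j,\xi^j)\big|_{z={\bf 0}}=-\overline{\xi_a}$ for the coordinate $a$ belonging to the $j$-th factor of $\Omega$; plugging these into the product formula for $K_\Omega$ gives, by a short computation,
\[ \frac{\partial K_\Omega}{\partial z_a}(z,\xi)\Big|_{z={\bf 0}}=K_\Omega({\bf 0},{\bf 0})\,p(\Omega_{j(a)})\,\overline{\xi_a}, \]
where $j(a)$ is the index of the irreducible factor of $\Omega$ containing the coordinate $a$. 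Hence, writing $F=(F^1,\ldots,F^N)$ and $\Phi_\mu(w,z):=\frac{\partial^{|I_\mu|}}{\partial\overline\zeta^{I_\mu}}\Psi_\zeta(w,z)\big|_{\zeta={\bf 0}}$ for any multi-index $I_\mu$ with $|I_\mu|\ge 1$, I obtain $\partial_{z_a}\Phi_\mu({\bf 0},{\bf 0})=K_\Omega({\bf 0},{\bf 0})\,p(\Omega_{j(a)})\,\overline{\partial^{I_\mu}F^a({\bf 0})}$, where $\partial^{I_\mu}F^a({\bf 0})$ denotes the partial derivative $\partial^{|I_\mu|}F^a/\partial\zeta^{I_\mu}$ evaluated at ${\bf 0}$.

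To finish, I would use that affine-linear non-degeneracy of $F$ together with $F({\bf 0})={\bf 0}$ forces the vectors $\{\partial^I F({\bf 0}):|I|\ge 1\}$ to span $\mathbb C^N$ (otherwise $F(D)$ would lie in a proper linear subspace), so one may choose distinct multi-indices $I_1,\ldots,I_N$, each of order $\ge 1$, with $\partial^{I_1}F({\bf 0}),\ldots,\partial^{I_N}F({\bf 0})$ linearly independent over $\mathbb C$. Then the $N\times N$ matrix
\[ J_z\Phi({\bf 0},{\bf 0})=\overline{\big(\partial^{I_\mu}F^a({\bf 0})\big)_{1\le\mu,a\le N}}\cdot\mathrm{diag}\big(K_\Omega({\bf 0},{\bf 0})\,p(\Omega_{j(1)}),\ldots,K_\Omega({\bf 0},{\bf 0})\,p(\Omega_{j(N)})\big) \]
is invertible, so by the implicit function theorem the common zero set $V_1$ of $\Phi_1,\ldots,\Phi_N$ is a smooth complex submanifold of $D\times\Omega$ of dimension $n$ near $({\bf 0},{\bf 0})$. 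Since $\mathrm{Graph}(F)\subseteq V_F\subseteq V_1$ and $\dim_{\mathbb C}\mathrm{Graph}(F)=n$, the germ of $V_F$ at $({\bf 0},{\bf 0})$ coincides with that of $\mathrm{Graph}(F)$, and therefore every irreducible component of $V_F$ containing $\mathrm{Graph}(F)$ has complex dimension $n$; that is, the system of functional equations of $F$ is sufficiently non-degenerate in the sense of Definition~\ref{defn:suff_non-degen}. The main (and admittedly mild) obstacle I anticipate is the verification of the displayed formula for $\partial_{z_a}K_\Omega|_{z={\bf 0}}$ in the reducible case --- equivalently, that the linear-in-$z$ part of the polarized kernel equation still has coefficient vector equal, coordinate by coordinate, to $\overline{F(\zeta)}$ up to the positive genus factors $p(\Omega_{j(a)})$; once this bookkeeping is carried out, the remaining argument is identical to the irreducible case treated in Proposition~\ref{Pro:Linear_Nondegen1}.
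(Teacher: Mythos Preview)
Your proposal is correct and follows essentially the same approach as the paper. The only cosmetic difference is that the paper first clears denominators, passing from $K_\Omega(z,F(\zeta))=A\,K_D(w,\zeta)^\lambda$ to the polynomial form $\Psi_\zeta(w,z)=Q_\Omega(z,F(\zeta))-Q_D(w,\zeta)^\lambda$ with $Q_\Omega(z,\xi)=\prod_j h_{\Omega_j}(z^j,\xi^j)^{p_j}$, and then reads off the linear-in-$z$ part $-\sum_j p_j\sum_\mu z^j_\mu\overline{\xi^j_\mu}$ directly, whereas you differentiate $K_\Omega$ itself and obtain the same genus weights $p(\Omega_{j(a)})$ multiplied by $K_\Omega({\bf 0},{\bf 0})$; in either formulation the resulting $J_z\Phi({\bf 0},{\bf 0})$ is the conjugate of the matrix $\big(\partial^{I_\mu}F^a({\bf 0})\big)$ right-multiplied by a nonsingular diagonal matrix, and the argument concludes identically.
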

\begin{proof}
In this case, we consider the system of functional equations
\[ K_\Omega(z,F(\zeta)) = A K_D(w,\zeta)^\lambda \]
for $\zeta\in D$, where $A:={K_\Omega({\bf 0},{\bf 0})\over  K_D({\bf 0},{\bf 0})^\lambda}>0$ is a real constant (cf.\,Mok \cite{Mok12}).
Note that for any bounded symmetric domain $U\Subset \mathbb C^N$ the Bergman kernel of $U$ is given by
\[ K_U(z,\xi) = {C_U \over Q_U(z,\xi)} \]
for $z,\xi\in U$, where $Q_U(z,\xi)$ is a polynomial in $(z,\overline{\xi})$ such that $Q_U({\bf 0},\xi)\equiv 1$ and $C_U:=K_U({\bf 0},{\bf 0})$ is a positive real constant depending on $U$ (cf.\,Mok \cite{Mok12}).
Actually, $C_U={1\over {\rm Vol}_{\rm{Euc}}(U)}$ where ${\rm Vol}_{\rm{Euc}}(U):=\int_U dV$ is the Euclidean volume of $U$ and $dV$ denotes the Lebesgue measure (cf.\,Mok \cite[Proposition 1, p.\,56]{Mok89}).
Now, we write $\Omega=\Omega_1\times \cdots \times \Omega_m$ as a product of irreducible bounded symmetric domains $\Omega_j\Subset \mathbb C^{N_j}$, $1\le j\le m$, so that
$\sum_{j=1}^m N_j = N$.
Write $z=(z^1,\ldots,z^m)$, $\xi=(\xi^1,\ldots,\xi^m) \in \Omega$, where $z^j=(z^j_1,\ldots,z^j_{N_j})\in \Omega_j$ and $\xi^j=(\xi^j_1,\ldots,\xi^j_{N_j})\in \Omega_j$ for $j=1,\ldots,m$.
Then, we have
\[ Q_\Omega(z,\xi) = \prod_{j=1}^m h_{\Omega_j}(z^j,\xi^j)^{p_j}
=1-\sum_{j=1}^m \left( p_j \sum_{\mu=1}^{N_j} z^j_{\mu} \overline{\xi^j_{\mu}} \right) + Q'_\Omega(z,\xi) \]
for some positive integers $p_j$ depending only on $\Omega_j$, $1\le j\le m$, where $Q'_\Omega(z,\xi)$ is a polynomial in $(z,\overline{\xi})$ consisting of higher order terms so that ${\partial \over \partial z^j_\mu} Q'_\Omega(z,\xi)|_{z={\bf 0}} = 0$ for $1\le \mu \le N_j$, $1\le j\le m$.
The system of functional equations becomes
\[ \Psi_\zeta(w,z):=  Q_\Omega(z,F(\zeta)) -  Q_D(w,\zeta)^\lambda  = 0 \]
for $\zeta\in D$. Then, we may define $V$ as in (\ref{Eq:Vari_Sys_FE1}) and (\ref{Eq:Vari_Sys_FE2}).

Write $F=(F_{(1)},\ldots,F_{(m)})$, where each $F_{(j)}=(F_{(j)}^1,\ldots,F_{(j)}^{N_j})$ is a holomorphic map from $D$ to $\Omega_j$, $1\le j\le m$.
Similar to the proof of Proposition \ref{Pro:Linear_Nondegen1}, we may choose multi-indices $I_1,\ldots,I_N$ such that the $N\times N$ matrix $\begin{pmatrix}
-\overline{{\partial^{|I_l|}F\over \partial \zeta^{I_l}}({\bf 0})}
\end{pmatrix}_{1\le l \le N}$ is of rank $N$.
We may define $\Phi_\mu,\Phi,J_z\Phi$ and $J_w\Phi$ as in the proof of Proposition \ref{Pro:Linear_Nondegen1}.
By similar computations, we have
\[ J_z\Phi({\bf 0},{\bf 0})
= \begin{pmatrix} -p_1 \overline{{\partial^{|I_l|}F_{(1)}\over \partial \zeta^{I_l}}({\bf 0})},\ldots,-p_m \overline{{\partial^{|I_l|}F_{(m)}\over \partial \zeta^{I_l}}({\bf 0})} \end{pmatrix}_{1\le l\le N}. \]
Note that if $v_1,\ldots, v_N \in M(N,1;\mathbb C)$ are column vectors such that the $N\times N$ matrix $\begin{bmatrix}
v_1,\ldots,v_N
\end{bmatrix}$ is of rank $N$, equivalently, $v_1,\ldots,v_N$ are $\mathbb C$-linearly independent, then it is obvious that $\mu_1 v_1,\ldots,\mu_N v_N$ are $\mathbb C$-linearly independent for any non-zero constants $\mu_1,\ldots,\mu_N$, and thus $\begin{bmatrix}
\mu_1v_1,\ldots,\mu_N v_N
\end{bmatrix}$ is also of rank $N$.
It follows that $J_z\Phi({\bf 0},{\bf 0})$ is of rank $N$. The result follows from the same arguments in the proof of Proposition \ref{Pro:Linear_Nondegen1}.
\end{proof}
\begin{remark}
More generally, we may consider the canonical K\"ahler metrics $g'_D$ and $g'_\Omega$ on $D$ and $\Omega$ respectively instead of the Bergman metrics so that $(D,g'_D)\cong (D_1,\lambda_1 g_{D_1})\times\cdots \times (D_k,\lambda_k g_{D_k})$ and $(\Omega,g'_\Omega)\cong (\Omega_1,\mu_1 g_{\Omega_1})\times \cdots \times (\Omega_m, \mu_m g_{\Omega_m})$, where $D_1,\ldots,D_k$ {\rm(}resp.\,$\Omega_1,\ldots,\Omega_m${\rm)} are the irreducible factors of $D$ {\rm(}resp.\,$\Omega${\rm)}, $\lambda_j>0$, $1\le j\le k$, and $\mu_i>0$, $1\le i\le m$, are real constants.
We let $F=(F_{(1)},\ldots,F_{(m)}):(D,g'_D;{\bf 0})\to (\Omega,g'_\Omega;{\bf 0})$ be a germ of holomorphic isometry, where $F_{(j)}:(D;{\bf 0})\to (\Omega_j;{\bf 0})$ is a germ of holomorphic map for $1\le j\le m$. Then, we have the system of functional equations
\begin{equation}\label{Eq:Sys_FE_Gen}
\prod_{j=1}^m h_{\Omega_j}(z^j,F_{(j)}(\zeta))^{\mu_j}
=\prod_{i=1}^k h_{D_i}(w^i,\zeta^i)^{\lambda_i}
\end{equation}
for $\zeta=(\zeta^1,\ldots,\zeta^k)\in \mathbb B^n({\bf 0},\varepsilon)
\subset D_1\times\cdots\times D_k = D$, where $\varepsilon>0$ is a sufficiently small number. The same proof of Proposition {\rm\ref{Pro:Linear_Nondegen2}} shows that if $F$ is linearly non-degenerate, then the system of functional equations {\rm(\ref{Eq:Sys_FE_Gen})} of $F$ is sufficiently non-degenerate, meaning that any irreducible component of the complex-analytic subvariety $V\subset D\times \Omega$ defined by the system of functional equations {\rm(\ref{Eq:Sys_FE_Gen})} containing the graph of $F$ is of complex dimension $n$.
\end{remark}

In general, the converse of Proposition \ref{Pro:Linear_Nondegen2} does not hold so that for holomorphic isometries between bounded symmetric domains, linearly non-degeneracy is strictly stronger than sufficiently non-degeneracy of the system of functional equations.
More precisely, following the notations in the above remark, even if $F$ is affine-linearly degenerate, i.e., $F(D) \subset W$ for some proper affine-linear subspace $W\subsetneq \mathbb C^N$, the system of functional equations {\rm(}\ref{Eq:Sys_FE_Gen}{\rm)} could still be sufficiently non-degenerate as in the following example.

\begin{Example}
Let $F:(\Delta,2ds_\Delta^2)\to (\Delta^2,ds_{\Delta^2}^2)$ be the holomorphic isometry defined by $F(w):=(w,w)$ for $w\in \Delta$.
Then, $F(\Delta) \subset \{(z_1,z_2)\in \mathbb C^2: z_1-z_2=0\}$ so that $F$ is linearly degenerate.
Let $V\subset \Delta \times \Delta^2$ be the complex-analytic subvariety defined by the system of functional equations of $F$.
By computations and {\rm(\ref{Eq:Vari_Sys_FE2})} we have
\[ V = \left\{(w,z_1,z_2)\in \Delta^3: -z_1-z_2 + 2w=0,\; 2z_1z_2-2w^2 = 0 \right\}. \]
The equation $-z_1-z_2 + 2w=0$ can be rewritten as $w={1\over 2}(z_1+z_2)$.
Substituting $w={1\over 2}(z_1+z_2)$ into the equation $2z_1z_2-2w^2 = 0$, we have the equation $-{1\over 4}(z_1-z_2)^2=0$ whose zero set in $\Delta^3$ is $\{(w,z_1,z_2)\in \Delta^3: z_1-z_2=0\}$.
Therefore, $V$ can be identified as
\[ V=
\left\{(w,z_1,z_2)\in \Delta^3: w={1\over 2}(z_1+z_2),\; z_1-z_2 = 0\right\}, \]
which is obviously of complex dimension $1$.
Alternatively, one can check that $V\cap (\{w\} \times \Delta^2) = \{(w,w,w)\}$ so that $\dim_{(w,F(w))} (V\cap (\{w\} \times \Delta^2)) = 0$ for all $w\in \Delta$.
We compute
\[ \begin{split}
0=&\dim_{(w,F(w))} (V\cap (\{w\} \times \Delta^2))\\ \ge& \dim_{(w,F(w))}(V) + \dim_{(w,F(w))}(\{w\} \times \Delta^2) - \dim_{(w,F(w))}(\Delta^3)\\
=& \dim_{(w,F(w))}(V)-1
\end{split}\]
so that
\[ 1\ge \dim_{(w,F(w))}(V) \ge \dim_{(w,F(w))}({\rm{Graph}}(F)) = 1,\]
i.e., $\dim_{(w,F(w))}(V)=1$.
This implies that any irreducible component of $V$ containing ${\rm{Graph}}(F)$ is of complex dimension $1$.
Hence, the system of functional equations of $F$ is sufficiently non-degenerate.
\end{Example}

\paragraph{\textbf{Acknowledgements}}
The author is partially supported by a program of the Chinese Academy of Sciences.
We would also like to thank the referee for helpful comments.

\end{document}